\providecommand{\U}[1]{\protect\rule{.1in}{.1in}}
\newtheorem{theorem}{Theorem}
\theoremstyle{plain}
\newtheorem{corollary}{Corollary}
\newtheorem{definition}{Definition}
\newtheorem{lemma}{Lemma}
\newtheorem{proposition}{Proposition}
\newtheorem{remark}{Remark}
\numberwithin{equation}{section}
\begin{document}
\title{Bilinear forms on the Dirichlet space}
\author{N. Arcozzi}
\address{Dipartimento do Matematica\\
Universita di Bologna\\
40127 Bologna, ITALY}
\author{R. Rochberg}
\address{Department of Mathematics\\
Washington University\\
St. Louis, MO 63130, U.S.A.}
\author{E. Sawyer}
\address{Department of Mathematics \& Statistics\\
McMaster University\\
Hamilton, Ontario, L8S 4K1, CANADA}
\author{B. D. Wick}
\address{Department of Mathematics\\
LeConte College\\
1523 Greene Street\\
University of South Carolina\\
Columbia, SC 29208, U.S.A.}
\thanks{N.A.'s work partially supported by the COFIN project Analisi Armonica, funded
by the Italian Minister for Research}
\thanks{R.R.'s work supported by the National Science Foundation under Grant No. 0070642}
\thanks{E.S.'s work supported by the National Science and Engineering Council of Canada.}
\thanks{B.D.W.'s work supported by the National Science Foundation under Grant No. 0752703}
\subjclass[2000]{31C25, 47B35, 30C85}
\maketitle

\section{Introduction}

\subsection{Overview}

Let $\mathcal{D}$ be the classical Dirichlet space, the Hilbert space of
holomorphic functions on the disk with inner product
\[
\left\langle f,g\right\rangle _{\mathcal{D}}=f(0)\overline{g(0)}%
+\int_{\mathbb{D}}f^{\prime}(z)\overline{g^{\prime}(z)}\text{ }dA
\]
and normed by $\left\Vert f\right\Vert _{\mathcal{D}}^{2}=\left\langle
f,f\right\rangle _{\mathcal{D}}.$ Given a holomorphic \textit{symbol function
}$b$ we define the associated Hankel type bilinear form, initially for
$f,g\in\mathcal{P}(\mathbb{D})$, the space of polynomials, by
\[
T_{b}\left(  f,g\right)  :=\left\langle fg,b\right\rangle _{\mathcal{D}}.
\]
The norm of $T_{b}$ is
\[
\left\Vert T_{b}\right\Vert _{\mathcal{D\times D}}:=\sup\left\{  \left\vert
T_{b}\left(  f,g\right)  \right\vert :\left\Vert f\right\Vert _{\mathcal{D}%
}=\left\Vert g\right\Vert _{\mathcal{D}}=1\right\}  .
\]
We say a positive measure $\mu$ on the disk is a \textit{Carleson measure for}
$\mathcal{D}$ if
\[
\left\Vert \mu\right\Vert _{CM(\mathcal{D})}:=\sup\left\{  \int_{\mathbb{D}%
}\left\vert f\right\vert ^{2}d\mu:\left\Vert f\right\Vert _{\mathcal{D}%
}=1\right\}  <\infty,
\]
and that a function $b$ is in the space $\mathcal{X}$ if the measure $d\mu
_{b}:=\left\vert b^{\prime}(z)\right\vert ^{2}dA\ $is a Carleson measure. We
norm $\mathcal{X}$ by%
\[
\left\Vert b\right\Vert _{\mathcal{X}}:=\left\vert b(0)\right\vert +\left\Vert
\left\vert b^{\prime}(z)\right\vert ^{2}dA\right\Vert _{CM(\mathcal{D})}^{1/2}%
\]
and denote by $\mathcal{X}_{0}$ the norm closure in $\mathcal{X}$ of the space
of polynomials.

Our main result is

\begin{theorem}
\ \label{main}

\begin{enumerate}
\item $T_{b}$ is bounded if and only if $b\in\mathcal{X}.$ In that case
\[
\left\Vert T_{b}\right\Vert _{\mathcal{D\times D}}\approx\left\Vert
b\right\Vert _{\mathcal{X}}.
\]

\item $T_{b}$ is compact if and only if $b\in\mathcal{X}_{0}.$
\end{enumerate}
\end{theorem}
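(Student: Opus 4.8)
\emph{Sufficiency in (1).} The plan is to prove (1) as matching upper and lower bounds and then bootstrap (2). The upper bound is elementary. For $f,g\in\mathcal{P}(\mathbb{D})$, the product rule gives
\[
T_{b}(f,g)=f(0)g(0)\overline{b(0)}+\int_{\mathbb{D}}\bigl(f'g+fg'\bigr)\overline{b'}\,dA .
\]
The first term is at most $|b(0)|\,\|f\|_{\mathcal{D}}\|g\|_{\mathcal{D}}$ since evaluation at $0$ is a unit-norm functional on $\mathcal{D}$. For the integral, put the differentiated factor in $L^{2}(dA)$ and the undifferentiated one in $L^{2}(\mu_{b})$ and use Cauchy--Schwarz:
\[
\Bigl|\int_{\mathbb{D}}f'g\,\overline{b'}\,dA\Bigr|\le\|f\|_{\mathcal{D}}\Bigl(\int_{\mathbb{D}}|g|^{2}\,d\mu_{b}\Bigr)^{1/2}\le\|f\|_{\mathcal{D}}\,\|\mu_{b}\|_{CM(\mathcal{D})}^{1/2}\,\|g\|_{\mathcal{D}},
\]
and symmetrically for $\int_{\mathbb{D}}fg'\overline{b'}\,dA$. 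Adding the three estimates gives $\|T_{b}\|_{\mathcal{D}\times\mathcal{D}}\lesssim|b(0)|+\|\mu_{b}\|_{CM(\mathcal{D})}^{1/2}=\|b\|_{\mathcal{X}}$, and density of $\mathcal{P}(\mathbb{D})$ in $\mathcal{D}$ completes this half.

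\emph{Necessity in (1).} This is where the work lies. Testing on $f=g=1$ gives $|b(0)|\le\|T_{b}\|_{\mathcal{D}\times\mathcal{D}}$, so it remains to bound $\|\mu_{b}\|_{CM(\mathcal{D})}$ by a multiple of $\|T_{b}\|_{\mathcal{D}\times\mathcal{D}}^{2}$. I would check the Carleson condition in its combinatorial form: by the known descriptions of Carleson measures for $\mathcal{D}$ (Stegenga's capacitary condition, equivalently the Arcozzi--Rochberg--Sawyer tree condition), it is enough to establish, for the discretization $\widehat{\mu}_{b}(\alpha)\approx\int_{B_{\alpha}}|b'|^{2}\,dA$ of $\mu_{b}$ onto the Bergman/dyadic tree $\mathcal{T}$ (here $B_{\alpha}$ is the Whitney box of the vertex $\alpha$, $z_{\alpha}$ its center, and $\alpha^{-}$ the parent of $\alpha$), an inequality of the shape
\[
\sum_{\beta\ge\alpha}\Bigl(\,\sum_{\gamma\ge\beta}\widehat{\mu}_{b}(\gamma)\Bigr)^{2}\le C\,\|T_{b}\|_{\mathcal{D}\times\mathcal{D}}^{2}\sum_{\gamma\ge\alpha}\widehat{\mu}_{b}(\gamma),\qquad\alpha\in\mathcal{T}.
\]
I would argue by contradiction: if this fails with a huge constant $N$, the stopping-time analysis behind the tree condition produces a ``Carleson forest'' on a region $G\subset\mathcal{T}$ where the ratio of the two sides exceeds a multiple of $N$. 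One then manufactures two genuine holomorphic functions adapted to $G$: a function $g\in\mathcal{D}$ that is essentially $1$ on $G$ with $\|g\|_{\mathcal{D}}^{2}\approx\mathrm{cap}(G)$ (an equilibrium-type potential), and a function $f$ built from atoms supported on the boxes of the forest whose increments $f(z_{\alpha})-f(z_{\alpha^{-}})$ are chosen box by box to correlate with $\overline{b'}$ with the stopping weights. Expanding $T_{b}(f,g)=\langle fg,b\rangle_{\mathcal{D}}$ by the product rule and using the tree model $\|h\|_{\mathcal{D}}^{2}\approx|h(0)|^{2}+\sum_{\alpha}|h(z_{\alpha})-h(z_{\alpha^{-}})|^{2}$, the pairing telescopes along $G$ to a quantity of size at least a multiple of $N^{1/2}\|f\|_{\mathcal{D}}\|g\|_{\mathcal{D}}$, contradicting boundedness once $N$ is large. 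I expect the main obstacle to be precisely this construction: the tree increments of a holomorphic function cannot be prescribed freely, so realizing the forest weights by honest holomorphic $f,g$ while simultaneously controlling $\|f\|_{\mathcal{D}}$ and $\|g\|_{\mathcal{D}}$, keeping the pairing honestly large, and absorbing the errors from the $fg'\overline{b'}$ cross term and from the discretization, is delicate. Equivalently, one is identifying the dual of the weakly factored space $\mathcal{D}\odot\mathcal{D}=\{\sum_{j}f_{j}g_{j}:\sum_{j}\|f_{j}\|_{\mathcal{D}}\|g_{j}\|_{\mathcal{D}}<\infty\}$ with $\mathcal{X}$, and all the difficulty is in the weak factorization of the predual.

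\emph{The compactness statement (2), easy direction.} If $b\in\mathcal{X}_{0}$, choose polynomials $p_{n}\to b$ in $\mathcal{X}$; by (1), $\|T_{b}-T_{p_{n}}\|_{\mathcal{D}\times\mathcal{D}}=\|T_{b-p_{n}}\|_{\mathcal{D}\times\mathcal{D}}\lesssim\|b-p_{n}\|_{\mathcal{X}}\to0$. Each $T_{p_{n}}$ is compact, because $\langle fg,p_{n}\rangle_{\mathcal{D}}$ depends only on finitely many Taylor coefficients of $fg$, and each Taylor coefficient of $fg$ is a weakly continuous bilinear expression in $(f,g)\in\mathcal{D}\times\mathcal{D}$. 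Hence $T_{b}$, a norm limit of compact bilinear forms, is compact.

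\emph{The compactness statement (2), hard direction.} If $T_{b}$ is compact, then $\mu_{b}$ must be a \emph{vanishing} Carleson measure for $\mathcal{D}$, i.e.\ $\|\mu_{b}|_{\mathbb{D}\setminus r\mathbb{D}}\|_{CM(\mathcal{D})}\to0$ as $r\uparrow1$: were it not, the necessity construction above, run in shrinking annuli $\mathbb{D}\setminus r_{n}\mathbb{D}$ with $r_{n}\uparrow1$, would yield test functions $f_{n},g_{n}$ of bounded $\mathcal{D}$-norm, supported near $\partial\mathbb{D}$ hence weakly null, with $|T_{b}(f_{n},g_{n})|\gtrsim\varepsilon>0$, contradicting $T_{b}(f_{n},g_{n})\to0$. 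Finally one checks that $\mu_{b}$ vanishing Carleson is equivalent to $b\in\mathcal{X}_{0}$: a polynomial has $|p'|^{2}dA$ trivially vanishing Carleson, and $\mathcal{X}$-limits of such are again vanishing Carleson by (1), giving one inclusion; for the other, the dilates $b_{\rho}(z)=b(\rho z)$ lie in $\mathcal{X}_{0}$ and $\|b-b_{\rho}\|_{\mathcal{X}}\to0$ as $\rho\uparrow1$ when $\mu_{b}$ is vanishing Carleson, so $b\in\mathcal{X}_{0}$. This ``little-oh'' version of (1) is proved by the same scheme, with ``$\le C$'' replaced by ``$\le\varepsilon$'' in the tree estimate for the part of $\mu_{b}$ near $\partial\mathbb{D}$.
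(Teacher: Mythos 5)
The easy direction of (1) and the easy direction of (2) in your write-up are correct and essentially identical to what the paper does. Your route to the hard direction of (2) is different from the paper's: you propose proving that $\mu_{b}$ is a \emph{vanishing} Carleson measure via a quantified annular version of the necessity argument, whereas the paper avoids any refined version of (1) by exploiting the algebraic identity $T_{S_{r}b}(f,g)=T_{b}(S_{r}f,S_{r}g)$ for the dilations $S_{r}k(z)=k(rz)$; strong convergence $S_{r}\to I$ and compactness of $T_{b}$ give $\|T_{S_{r}b}-T_{b}\|\to 0$, and then part (1) transfers this to $\|S_{r}b-b\|_{\mathcal{X}}\to 0$, while $S_{r}b\in\mathcal{X}_{0}$ because its Taylor coefficients decay geometrically. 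Your vanishing-Carleson route is plausible in outline but would require re-running the entire main estimate with a small-$o$ refinement, and you have not carried that out; the dilation argument is both shorter and logically cleaner since it needs only the quantitative statement in (1).

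The genuine gap is the necessity direction of (1). You correctly identify the Stegenga/Arcozzi--Rochberg--Sawyer capacitary or tree condition as the target, and you correctly flag the central obstruction, namely that tree increments of a holomorphic function cannot be prescribed freely. But you stop exactly there: the contradiction sketch you give (Carleson forest, atoms chosen box by box to telescope against $\overline{b'}$) is not a construction, and there is no indication of how to realize it by honest holomorphic functions with controlled $\mathcal{D}$-norm while absorbing the cross terms. What the paper actually does is quite different in detail and far more structured. It passes, by averaging over rotated Bergman trees (Corollary \ref{M}), to a near-extremal open set $G$ for the ratio $\mu_{b}(V_{G})/Cap_{\mathbb{D}}(G)$, then builds a chain of expansions $V_{G}\subset V_{G}^{\alpha}\subset V_{G}^{\gamma}\subset V_{G}^{\beta}$ using disk blowups and a $\mathcal{T}$-capacitary blowup (Definitions \ref{diskblowup}, \ref{stopblowup}, \ref{capblowup}) designed so that (i) capacity grows by at most a factor $\rho^{-2}$ (Lemma \ref{contain}), (ii) the collar $V_{G}^{\beta}\setminus V_{G}$ carries only an $\varepsilon$-fraction of $\mu_{b}(V_{G})$ (Proposition \ref{3}), and (iii) $\mathbb{D}\setminus V_{G}^{\alpha}$ is geometrically separated from $V_{G}$ (Lemma \ref{geoseparation}). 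The test functions are then chosen analytically rather than box by box: $g=\Phi^{2}$ for a holomorphic approximate capacitary extremal $\Phi=\Gamma_{s}(\cdots)$ whose properties are established in Proposition \ref{newBoe} and Corollary \ref{smallcap}, and $f=\Gamma_{s}\bigl(\chi_{V_{G}}b'/((1+s)\bar\zeta)\bigr)$, the Bergman-type projection of $b'$ truncated to $V_{G}$, so that $f'=b'+\Lambda b'$ with $\Lambda b'$ involving only $b'$ off $V_{G}$. Expanding $T_{b}(f,g)$ isolates $\int_{V_{G}}|b'|^{2}|\Phi|^{2}\,dA$ as the main term and the errors are controlled by the collar estimate, a bilinear Schur test exploiting the separation (Proposition \ref{Schurtest}, Lemma \ref{bilin}), and Forelli--Rudin integral estimates (Corollary \ref{210}). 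None of this machinery appears in your sketch, and without it you have identified the right target and obstacle but have not produced the decisive construction, so necessity remains unproved.
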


This result, which had been conjectured by the second author for some time, is
part of an intriguing pattern of results involving boundedness of Hankel forms
on Hardy spaces in one and several variables and boundedness of
Schr\"{o}dinger operators on the Sobolev space. We recall those results in the
next subsection.

Boundedness criteria for bilinear forms can be recast as weak factorization of
function spaces and we discuss that in the third subsection. The first
statement in Theorem \ref{main} is equivalent to a weak factorization of the
predual of $\mathcal{X};$ in notation we introduce there
\begin{equation}
\left(  \mathcal{D}\odot\mathcal{D}\right)  ^{\ast}=\mathcal{X}%
.\label{weakdirichlet}%
\end{equation}
In the final subsection we describe the relation between Theorem \ref{main}
and classical results about Hankel matrices.

The proof of Theorem \ref{main} is in Sections 2 and 3. It is easy to see that
$\left\Vert T_{b}\right\Vert _{\mathcal{D\times D}}\leq C\left\Vert
b\right\Vert _{\mathcal{X}}.$ To obtain the other inequality we must use the
boundedness of $T_{b}$ to show $\left\vert b^{\prime}\right\vert ^{2}dA$ is a
Carleson measure. Analysis of the capacity theoretic characterization of
Carleson measures due to Stegenga allows us to focus attention on a certain
set $V$ in $\mathbb{D}$ and the relative sizes of $\int_{V}\left\vert
b^{\prime}\right\vert ^{2}$ and the capacity of the set $\bar{V}\cap
\partial\mathbb{\bar{D}}.$ To compare these quantities we construct
$V_{\text{exp}},$ an expanded version of the set $V$ which satisfies two
conflicting conditions. First, $V_{\text{exp}}$ is not much larger than $V$,
either when measured by $\int_{V_{\text{exp}}}\left\vert b^{\prime}\right\vert
^{2}$ or by the capacity of the $\overline{V_{\text{exp}}}\cap\partial
\mathbb{\bar{D}}.$ Second, $\mathbb{D}\setminus V_{\text{exp}}$ is well
separated from $V$ in a way that allows the interaction of quantities
supported on the two sets to be controlled. Once this is done we can construct
a function $\Phi_{V}\in\mathcal{D}$ which is approximately one on $V$ and
which has $\Phi_{V}^{\prime}$ approximately supported on $\mathbb{D}\setminus
V_{\text{exp}}.$ Using $\Phi_{V}$ we build functions $f$ and $g$ with the
property that
\[
\left\vert T_{b}(f,g)\right\vert =\int_{V}\left\vert b^{\prime}\right\vert
^{2}+\text{ error.}%
\]
The technical estimates on $\Phi_{V}$ allow us to show that the error term is
small and the boundedness of $T_{b}$ then gives the required control of
$\int_{V}\left\vert b^{\prime}\right\vert ^{2}$.

Once the first part of the theorem is established, the second follows rather directly.

\subsection{Other Bilinear Forms}

The Hardy space of the unit disk, $H^{2}\left(  \mathbb{D}\right)  ,$ can be
defined as the space of holomorphic functions on the disk with inner product
\[
\left\langle f,g\right\rangle _{H^{2}\left(  \mathbb{D}\right)  }%
=f(0)\overline{g(0)}+\int_{\mathbb{D}}f^{\prime}(z)\overline{g^{\prime}%
(z)}\text{ }(1-\left\vert z\right\vert ^{2})dA
\]
and normed by $\left\Vert f\right\Vert _{H^{2}\left(  \mathbb{D}\right)  }%
^{2}=\left\langle f,f\right\rangle _{H^{2}\left(  \mathbb{D}\right)  }.$ Given
a holomorphic \textit{symbol function }$b$ the Hankel form with symbol $b$ is
the bilinear form
\begin{equation}
T_{b}^{H^{2}\left(  \mathbb{D}\right)  }\left(  f,g\right)  :=\left\langle
fg,b\right\rangle _{H^{2}\left(  \mathbb{D}\right)  }.\label{hardy}%
\end{equation}
The boundedness criteria for such forms was given by Nehari in 1957 \cite{N}.
He used the fact that functions in the Hardy space $H^{1}$ can be written as
the product of functions in $H^{2}$ and showed $T_{b}^{H^{2}\left(
\mathbb{D}\right)  }$ will be bounded if and only if $b$ is in the dual space
of $H^{1}.$ Using Ch. Fefferman's identification of the dual of $H^{1}$ we can
reformulate this in the language of Carleson measures. We say a positive
measure $\mu$ on the disk is a \textit{Carleson measure for} $H^{2}\left(
\mathbb{D}\right)  $ if
\[
\left\Vert \mu\right\Vert _{CM(H^{2}\left(  \mathbb{D}\right)  )}%
:=\sup\left\{  \int_{\mathbb{D}}\left\vert f\right\vert ^{2}d\mu:\left\Vert
f\right\Vert _{H^{2}\left(  \mathbb{D}\right)  }=1\right\}  <\infty.
\]
The form $T_{b}^{H^{2}\left(  \mathbb{D}\right)  }$ is bounded if and only if
$b$ is in the function space $BMO$ or, equivalently, if and only if
\[
\left\vert b^{\prime}(z)\right\vert ^{2}(1-\left\vert z\right\vert ^{2})dA\in
CM(H^{2}\left(  \mathbb{D}\right)  ).
\]

Later, in \cite{CRW}, Nehari's theorem was viewed as a result about
Calder\'{o}n-Zygmund singular integrals on spaces of homogenous type and an
analogous result was proved for $H^{2}\left(  \partial\mathbb{B}^{n}\right)  ,
$ the Hardy space of the sphere in complex $n-$space. In that context the
Hankel form is defined similarly
\[
T_{b}^{H^{2}\left(  \partial\mathbb{B}^{n}\right)  }\left(  f,g\right)
:=\left\langle fg,b\right\rangle _{H^{2}\left(  \partial\mathbb{B}^{n}\right)
}.
\]
That form is bounded if and only if $b$ is in $BMO\left(  \partial
\mathbb{B}^{n}\right)  $ or, equivalently, if and only if, with $\nabla$
denoting the invariant gradient on the ball,%
\[
\left\vert \nabla b(z)\right\vert ^{2}dV\in CM\left(  H^{2}\left(
\partial\mathbb{B}^{n}\right)  \right)  .
\]

The approach in \cite{CRW} is not well suited for analysis on the Hardy space
of the polydisk, $H^{2}\left(  \mathbb{D}^{n}\right)  .$ However Ferguson,
Lacey, and Terwilleger were able to extend methods of multivariable harmonic
analysis and obtain a result for $H^{2}\left(  \mathbb{D}^{n}\right)  $
\cite{FL}, \cite{LT}. They showed that a Hankel form on $H^{2}\left(
\mathbb{D}^{n}\right)  ,$ again defined as a form whose value only depends on
the product of its arguments, is bounded if and only if the symbol function
$b$ lies in $BMO\left(  \mathbb{D}^{n}\right)  $ or, equivalently, if and only
if derivatives of $b$ can be used to generate a Carleson measure for
$H^{2}\left(  \mathbb{D}^{n}\right)  .$

In \cite{MV} Maz'ya and Ververbitsky presented a boundedness criterion for a
bilinear form associated to the Schr\"{o}dinger operator. Although their
viewpoint and proof techniques were quite different from those used for Hankel
forms, their result is formally very similar. We change their formulation
slightly to make the analogy more visible, our $b$ is related to their $V$ by
$b=-\Delta^{-1}V$. Let $\mathring{L}_{2}^{1}\left(
\mathbb{R}
^{n}\right)  \mathbb{\ }$be the energy space (homogenous Sobolev space)
obtained by completing $C_{0}^{\infty}\left(
\mathbb{R}
^{n}\right)  $ with respect to the quasinorm induced by the Dirichlet inner
product%
\[
\left\langle f,g\right\rangle _{\text{Dir}}=\int_{%
\mathbb{R}
^{n}}\nabla f\cdot\overline{\nabla g}\text{ }dx.
\]
Given $b$, a bilinear Schr\"{o}dinger form on $\mathring{L}_{2}^{1}\left(
\mathbb{R}
^{n}\right)  \times\mathring{L}_{2}^{1}\left(
\mathbb{R}
^{n}\right)  $ is defined by%
\[
S_{b}(f,g)=\left\langle fg,b\right\rangle _{\text{Dir}}.
\]
We will say a measure $\mu$ on $%
\mathbb{R}
^{n}$ is a \textit{Carleson measure for the energy space} if
\[
\left\Vert \mu\right\Vert _{CM(\mathring{L}_{2}^{1}\left(
\mathbb{R}
^{n}\right)  )}:=\sup\left\{  \int_{\mathbb{%
\mathbb{R}
}^{n}}\left\vert f\right\vert ^{2}d\mu:\left\Vert f\right\Vert _{\mathring
{L}_{2}^{1}\left(
\mathbb{R}
^{n}\right)  }=1\right\}  <\infty.
\]
Corollary 2 of \cite{MV} is that $S_{b}$ is bounded if and only if
\[
|\left(  -\Delta\right)  ^{1/2}b|^{2}dx\in CM(\mathring{L}_{2}^{1}\left(
\mathbb{R}
^{n}\right)  ).
\]

It would be very satisfying to know an underlying reason for the similarity of
these various results to each other and to Theorem 1.

\subsection{Reformulation in Terms of Weak Factorization}

In his proof Nehari used the fact that any function $f\in H^{1}\left(
\mathbb{D}\right)  $ could be factored as $f=gh$ with $g,h\in H^{2}\left(
\mathbb{D}\right)  ,$ $\left\Vert f\right\Vert _{H^{1}\left(  \mathbb{D}%
\right)  }=\left\Vert g\right\Vert _{H^{2}\left(  \mathbb{D}\right)
}\left\Vert h\right\Vert _{H^{2}\left(  \mathbb{D}\right)  }.$ In \cite{CRW}
the authors develop a weak substitute for this. For two Banach spaces of
functions, $\mathcal{A}$ and $\mathcal{B}$, defined on the same domain, define
the weakly factored space $\mathcal{A\odot B}$ to be the completion of finite
sums $f=\sum a_{i}b_{i};$ $\left\{  a_{i}\right\}  \subset\mathcal{A}$,
$\left\{  b_{i}\right\}  \subset\mathcal{B}$ using the norm
\[
\left\Vert f\right\Vert _{\mathcal{A}\odot\mathcal{B}}=\inf\left\{
\sum\left\Vert a_{i}\right\Vert _{\mathcal{A}}\left\Vert b_{i}\right\Vert
_{\mathcal{B}}:f=\sum a_{i}b_{i}\right\}  .
\]

It is shown in \cite{CRW} that $H^{2}\left(  \partial\mathbb{B}^{n}\right)
\mathcal{\odot}H^{2}\left(  \partial\mathbb{B}^{n}\right)  =H^{1}\left(
\partial\mathbb{B}^{n}\right)  $ and consequentially
\begin{equation}
\left(  H^{2}\left(  \partial\mathbb{B}^{n}\right)  \mathcal{\odot}%
H^{2}\left(  \partial\mathbb{B}^{n}\right)  \right)  ^{\ast}=BMO\left(
\partial\mathbb{B}^{n}\right)  .\label{weakhardy}%
\end{equation}
(In this context, by "$=$" we mean equality of the function spaces and
equivalence of the norms.) Based on the analogy between (\ref{weakdirichlet})
and (\ref{weakhardy}) we think of $\mathcal{D}\odot\mathcal{D}$ as a type of
$H^{1}$ space and of $\mathcal{X}$ as a type of $BMO$ space. That viewpoint is
developed further in \cite{ARSW}.

The precise formulation of (\ref{weakdirichlet}) is the following corollary.

\begin{corollary}
For $b\in\mathcal{X}$ set $\Lambda_{b}h=T_{b}\left(  h,1\right)  ,$ then
$\Lambda_{b}\in\left(  \mathcal{D}\odot\mathcal{D}\right)  ^{\ast}.$
Conversely, if $\Lambda\in\left(  \mathcal{D}\odot\mathcal{D}\right)  ^{\ast}$
there is a unique $b\in\mathcal{X}$ so that for all $h\in\mathcal{P}%
(\mathbb{D})$ we have $\Lambda h=T_{b}\left(  h,1\right)  =\Lambda_{b}h.$ In
both cases $\left\Vert \Lambda_{b}\right\Vert _{\left(  \mathcal{D}%
\odot\mathcal{D}\right)  ^{\ast}}\approx\left\Vert b\right\Vert _{\mathcal{X}%
}$.

\begin{proof}
If $b\in\mathcal{X}$ and $f\in\mathcal{D}\odot\mathcal{D}$, say $f=\sum
g_{i}h_{i}$ with $\sum\left\Vert g_{i}\right\Vert _{\mathcal{D}}\left\Vert
h_{i}\right\Vert _{\mathcal{D}}\leq\left\Vert f\right\Vert _{\mathcal{D}%
\odot\mathcal{D}}+\varepsilon$, then
\begin{align*}
\left\vert \Lambda_{b}f\right\vert  & =\left\vert \sum_{i=1}^{\infty
}\left\langle g_{i}h_{i},b\right\rangle _{\mathcal{D}}\right\vert =\left\vert
\sum_{i=1}^{\infty}T_{b}(g_{i},h_{i})\right\vert \\
& \leq\left\Vert T_{b}\right\Vert \sum_{i=1}^{\infty}\left\Vert g_{i}%
\right\Vert _{\mathcal{D}}\left\Vert h_{i}\right\Vert _{\mathcal{D}}%
\leq\left\Vert T_{b}\right\Vert \left(  \left\Vert f\right\Vert _{\mathcal{D}%
\odot\mathcal{D}}+\varepsilon\right)  .
\end{align*}
It follows that $\Lambda_{b}f=\left\langle f,b\right\rangle _{\mathcal{D}}$
defines a continuous linear functional on $\mathcal{D}\odot\mathcal{D}$ with
$\left\Vert \Lambda_{b}\right\Vert \leq\left\Vert T_{b}\right\Vert $.

Conversely, if $\Lambda\in\left(  \mathcal{D}\odot\mathcal{D}\right)  ^{\ast}%
$with norm $\left\Vert \Lambda\right\Vert $, then for all $f\in\mathcal{D}$
\[
\left\vert \Lambda f\right\vert =\left\vert \Lambda\left(  f\cdot1\right)
\right\vert \leq\left\Vert \Lambda\right\Vert \left\Vert f\right\Vert
_{\mathcal{D}}\left\Vert 1\right\Vert _{\mathcal{D}}=\left\Vert \Lambda
\right\Vert \left\Vert f\right\Vert _{\mathcal{D}}.
\]
Hence there is a unique $b\in\mathcal{D}$ such that $\Lambda f=\Lambda_{b}f $
for $f\in\mathcal{D}$. Finally, if $f=gh$ with $g,h\in\mathcal{D}$ we have%
\begin{align*}
\left\vert T_{b}\left(  g,h\right)  \right\vert  & =\left\vert \left\langle
gh,b\right\rangle _{\mathcal{D}}\right\vert =\left\vert \Lambda_{b}%
f\right\vert =\left\vert \Lambda f\right\vert \\
& \leq\left\Vert \Lambda\right\Vert \left\Vert f\right\Vert _{\mathcal{D}%
\odot\mathcal{D}}\leq\left\Vert \Lambda\right\Vert \left\Vert g\right\Vert
_{\mathcal{D}}\left\Vert h\right\Vert _{\mathcal{D}},
\end{align*}
which shows that $T_{b}$ extends to a continuous bilinear form on
$\mathcal{D}\odot\mathcal{D}$ with $\left\Vert T_{b}\right\Vert \leq\left\Vert
\Lambda\right\Vert $. By Theorem \ref{main} we conclude $b\in\mathcal{X}$ and
collecting the estimates that $\left\Vert \Lambda\right\Vert =\left\Vert
\Lambda_{b}\right\Vert _{\left(  \mathcal{D}\odot\mathcal{D}\right)  ^{\ast}%
}\approx\left\Vert T_{b}\right\Vert \approx\left\Vert b\right\Vert
_{\mathcal{X}}.$
\end{proof}
\end{corollary}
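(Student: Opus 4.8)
The plan is to obtain the corollary as a formal consequence of Theorem~\ref{main}: once the estimate $\|T_b\|_{\mathcal{D}\times\mathcal{D}}\approx\|b\|_{\mathcal{X}}$ is available, the identification $(\mathcal{D}\odot\mathcal{D})^*=\mathcal{X}$ is a matter of duality bookkeeping for the weakly factored space. I would prove the two halves of the statement separately and then combine the norm estimates at the end.

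For the forward implication, suppose $b\in\mathcal{X}$, so by Theorem~\ref{main}(1) the Hankel form $T_b$ is bounded with $\|T_b\|\approx\|b\|_{\mathcal{X}}$. Given a finite sum $f=\sum g_ih_i$ of products of Dirichlet functions I would set $\Lambda_b f:=\sum T_b(g_i,h_i)$. The first thing to check is that this is well defined: because $T_b(g,h)=\langle gh,b\rangle_{\mathcal{D}}$ depends only on the Taylor coefficients of the single function $gh$, the sum $\sum T_b(g_i,h_i)=\langle\sum g_ih_i,b\rangle$ depends only on $f$ and not on the chosen factorization. Next, $|\Lambda_b f|\le\|T_b\|\sum\|g_i\|_{\mathcal{D}}\|h_i\|_{\mathcal{D}}$, and taking the infimum over representations of $f$ gives $|\Lambda_b f|\le\|T_b\|\,\|f\|_{\mathcal{D}\odot\mathcal{D}}$. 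Since finite sums are dense, $\Lambda_b$ extends to an element of $(\mathcal{D}\odot\mathcal{D})^*$ with $\|\Lambda_b\|\le\|T_b\|$, and $\Lambda_b h=T_b(h,1)$ when $h$ is a polynomial, as claimed.

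For the converse, let $\Lambda\in(\mathcal{D}\odot\mathcal{D})^*$. The inclusion $j:\mathcal{D}\to\mathcal{D}\odot\mathcal{D}$, $f\mapsto f\cdot1$, satisfies $\|jf\|_{\mathcal{D}\odot\mathcal{D}}\le\|f\|_{\mathcal{D}}\|1\|_{\mathcal{D}}=\|f\|_{\mathcal{D}}$, so $\Lambda\circ j$ is a bounded functional on the Hilbert space $\mathcal{D}$; by the Riesz representation theorem there is a unique $b\in\mathcal{D}$ with $\Lambda f=\langle f,b\rangle_{\mathcal{D}}$ for $f\in\mathcal{D}$, and this uniqueness is exactly the uniqueness asserted in the corollary. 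For polynomials $g,h$ the product $gh$ lies in $\mathcal{D}$ and in $\mathcal{D}\odot\mathcal{D}$ with $\|gh\|_{\mathcal{D}\odot\mathcal{D}}\le\|g\|_{\mathcal{D}}\|h\|_{\mathcal{D}}$, so $|T_b(g,h)|=|\langle gh,b\rangle_{\mathcal{D}}|=|\Lambda(gh)|\le\|\Lambda\|\,\|g\|_{\mathcal{D}}\|h\|_{\mathcal{D}}$; hence $T_b$ is bounded with $\|T_b\|\le\|\Lambda\|$. Theorem~\ref{main}(1) now gives $b\in\mathcal{X}$ with $\|b\|_{\mathcal{X}}\lesssim\|T_b\|$, and by the forward implication $\Lambda_b$ agrees with $\Lambda$ first on $\mathcal{D}$ and then on all finite sums, hence on all of $\mathcal{D}\odot\mathcal{D}$ by density. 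Combining the inequalities, $\|\Lambda\|=\|\Lambda_b\|_{(\mathcal{D}\odot\mathcal{D})^*}\approx\|T_b\|\approx\|b\|_{\mathcal{X}}$.

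There is no deep obstacle here — the analytic content is entirely in Theorem~\ref{main} — but the step I would be most careful about is the well-definedness and continuity of $\Lambda_b$ on $\mathcal{D}\odot\mathcal{D}$: an element of that completion is only a Cauchy sequence of finite sums up to equivalence, so one must verify that the functional does not depend on the representative and is continuous for the $\odot$-norm, both of which follow from the two displayed bounds above. I would also record in passing that point evaluations remain bounded on $\mathcal{D}\odot\mathcal{D}$ (being bounded on $\mathcal{D}$ and submultiplicative under products), so that $(\mathcal{D}\odot\mathcal{D})^*=\mathcal{X}$ is a genuine identification of spaces of holomorphic functions on $\mathbb{D}$ with equivalence of norms, and not merely an abstract isometry of Banach spaces.
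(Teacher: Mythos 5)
Your proposal is correct and follows essentially the same route as the paper: the forward direction bounds $\Lambda_b$ on finite sums $\sum g_ih_i$ via $|\sum T_b(g_i,h_i)|\leq\Vert T_b\Vert\sum\Vert g_i\Vert_{\mathcal{D}}\Vert h_i\Vert_{\mathcal{D}}$ and takes the infimum, while the converse restricts $\Lambda$ along $f\mapsto f\cdot 1$, invokes the Riesz representation on $\mathcal{D}$ to produce $b$, shows $\Vert T_b\Vert\leq\Vert\Lambda\Vert$ on products, and then appeals to Theorem \ref{main}. Your added remarks on well-definedness of $\Lambda_b$ under change of representation and on density are sound refinements of the same argument, not a different proof.
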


Define the space $\partial^{-1}\left(  \partial\mathcal{D}\odot\mathcal{D}%
\right)  $ to be the completion of the space of functions $f$ which have
$f^{\prime}=\sum_{i=1}^{N}g_{i}^{\prime}h_{i}$ (and thus $f=\partial^{-1}%
\sum\left(  \partial g_{i}\right)  h_{i})$) using the norm
\[
\left\Vert f\right\Vert _{\partial^{-1}\left(  \partial\mathcal{D}%
\odot\mathcal{D}\right)  }=\inf\left\{  \sum\left\Vert g_{i}\right\Vert
_{\mathcal{D}}\left\Vert h_{i}\right\Vert _{\mathcal{D}}:f^{\prime}=\sum
_{i=1}^{N}g_{i}^{\prime}h_{i}\right\}  .
\]
Using the previous corollary we can recapture, but by a very indirect route,
an earlier result of Coifman-Murai \cite{CM}, Tolokonnikov \cite{T}, and
Rochberg-Wu \cite{RW}.

\begin{corollary}
[\cite{CM}, \cite{T}, \cite{RW}]%
\[
\left(  \partial^{-1}\left(  \partial\mathcal{D}\odot\mathcal{D}\right)
\right)  ^{\ast}=\mathcal{X}\text{.}%
\]

\begin{proof}
As in the previous proof this statement is equivalent to a boundedness
criterion for a class of bilinear forms. In this case the forms of interest
are those defined on $\mathcal{D}\times\mathcal{D}$ by%
\[
K_{b}(f,g)=\int_{\mathbb{D}}f^{\prime}g\overline{b^{\prime}}dV.
\]
The proof given later that $T_{b}$ is bounded if $b\in\mathcal{X}$ in fact
shows that $K_{b}$ is bounded and then notes that
\begin{equation}
T_{b}(f,g)=K_{b}(f,g)+K_{b}(g,f)+(fg\bar{b})(0).\label{summ}%
\end{equation}
In the other direction, if $K_{b}$ is bounded then the same relation shows
$T_{b}$ is bounded and we can then appeal to Theorem \ref{main}.
\end{proof}
\end{corollary}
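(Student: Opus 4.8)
The plan is to follow the template of the previous corollary and recast the asserted duality as a boundedness criterion for the bilinear form
\[
K_{b}(f,g)=\int_{\mathbb{D}}f^{\prime}\,g\,\overline{b^{\prime}}\,dV ,\qquad f,g\in\mathcal{D},
\]
namely: $K_{b}$ extends to a bounded form on $\mathcal{D}\times\mathcal{D}$ if and only if $b\in\mathcal{X}$, with comparable norms — understood modulo the constant $b(0)$, which $K_{b}$ cannot detect, just as the $BMO$ in (\ref{weakhardy}) is understood modulo constants. Granting this criterion, the passage to the dual statement is formally identical to the one for the first corollary: if $f\in\partial^{-1}(\partial\mathcal{D}\odot\mathcal{D})$ has $f'=\sum_{i}g_{i}'h_{i}$ with the antiderivative normalized so that $f(0)=0$, then $\langle f,b\rangle_{\mathcal{D}}=\sum_{i}K_{b}(g_{i},h_{i})$, so $b$ induces a bounded functional on $\partial^{-1}(\partial\mathcal{D}\odot\mathcal{D})$ exactly when $K_{b}$ is bounded, with matching norms after taking the infimum over decompositions; conversely, a bounded functional tested against $f$ with $f'=g'\cdot 1$ is seen to be represented by a unique $b\in\mathcal{D}$, which the criterion then places in $\mathcal{X}$.

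The substance is therefore the boundedness criterion for $K_{b}$, and the device linking it to Theorem~\ref{main} is the algebraic identity
\[
T_{b}(f,g)=K_{b}(f,g)+K_{b}(g,f)+(fg\bar{b})(0),
\]
which follows from $\langle fg,b\rangle_{\mathcal{D}}=(fg\bar b)(0)+\int_{\mathbb{D}}(fg)'\overline{b'}\,dA$ together with $(fg)'=f'g+fg'$. For the direction $b\in\mathcal{X}\Rightarrow K_{b}$ bounded I would add no new argument: the Section~2 proof that $b\in\mathcal{X}$ implies $\|T_{b}\|_{\mathcal{D}\times\mathcal{D}}\lesssim\|b\|_{\mathcal{X}}$ already proceeds by estimating integrals of exactly the shape $\int f'g\,\overline{b'}$, so running it verbatim records the stronger bound $|K_{b}(f,g)|\lesssim\|b\|_{\mathcal{X}}\|f\|_{\mathcal{D}}\|g\|_{\mathcal{D}}$. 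For the converse, the identity together with $|(fg\bar b)(0)|\le|b(0)|\,\|f\|_{\mathcal{D}}\|g\|_{\mathcal{D}}$ gives $\|T_{b}\|_{\mathcal{D}\times\mathcal{D}}\le 2\|K_{b}\|+|b(0)|$, so boundedness of $K_{b}$ forces boundedness of $T_{b}$, whence Theorem~\ref{main}(1) forces $b\in\mathcal{X}$ with $\|b\|_{\mathcal{X}}\lesssim\|K_{b}\|+|b(0)|$.

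The only real obstacle is bookkeeping rather than any new estimate. First, one must re-examine the Section~2 argument and check that it genuinely controls the one-sided form $K_{b}$ and not merely its symmetrization $T_{b}$ — this should be transparent once that proof is in hand, since $K_{b}$ is precisely the principal term there. Second, one must keep track of the constant $b(0)$, which is invisible both to $K_{b}$ and to $\partial^{-1}(\partial\mathcal{D}\odot\mathcal{D})$, so that the equality $(\partial^{-1}(\partial\mathcal{D}\odot\mathcal{D}))^{\ast}=\mathcal{X}$ is to be read with the same ``equality of spaces and equivalence of norms, modulo constants'' convention used for (\ref{weakhardy}). No estimate needed here is harder than ones already required for Theorem~\ref{main}.
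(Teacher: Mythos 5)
Your proposal follows the same route as the paper: recast the duality as the boundedness criterion for $K_{b}$, invoke the Section~2 estimate to get $b\in\mathcal{X}\Rightarrow K_{b}$ bounded, and use the algebraic identity $T_{b}(f,g)=K_{b}(f,g)+K_{b}(g,f)+(fg\bar{b})(0)$ together with Theorem~\ref{main}(1) for the converse. The extra bookkeeping you supply — the explicit duality pairing $\langle f,b\rangle_{\mathcal{D}}=\sum_i K_{b}(g_i,h_i)$ with the $f(0)=0$ normalization, the bound $\|T_{b}\|\le 2\|K_{b}\|+|b(0)|$, and the observation that $b(0)$ is invisible to both $K_{b}$ and $\partial^{-1}(\partial\mathcal{D}\odot\mathcal{D})$ so the norm equivalence is best read modulo constants — is a correct and worthwhile clarification of a point the paper glosses over, but it is a fill-in rather than a different argument.
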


The proofs in \cite{CM}, \cite{T}, and \cite{RW} give, explicitly or
implicitly, estimates from below for $\left\vert K_{b}(f,g)\right\vert .$ In
proving Theorem \ref{main} we need to estimate $\left\vert T_{b}%
(f,g)\right\vert $ from below. We avoided using the representation
(\ref{summ}) as a starting point because it was unclear how to analyze the
potential cancellation between terms on the right hand side of (\ref{summ}).

Combining the previous corollaries we have, with the obvious notation,

\begin{corollary}%
\[
\partial\left(  \mathcal{D}\odot\mathcal{D}\right)  =\partial\mathcal{D}%
\odot\mathcal{D}\text{.}%
\]

\end{corollary}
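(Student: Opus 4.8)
The plan is to recognize both sides as completions of the polynomials, in norms that can be identified by matching dual spaces. To begin, read the ``obvious notation'' as: $\partial\mathcal{D}=\{g':g\in\mathcal{D}\}$ with $\|g'\|_{\partial\mathcal{D}}=\|g'\|_{L^{2}(dA)}$; $\partial\mathcal{D}\odot\mathcal{D}$ the completion of finite sums $F=\sum\phi_{i}h_{i}$ ($\phi_{i}\in\partial\mathcal{D}$, $h_{i}\in\mathcal{D}$) in the corresponding inf-norm; and $\partial(\mathcal{D}\odot\mathcal{D})=\{f':f\in\mathcal{D}\odot\mathcal{D}\}$ with the quotient norm $\|f'\|=\inf_{c}\|f+c\|_{\mathcal{D}\odot\mathcal{D}}$. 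Two bookkeeping points are then immediate: (a) in the infimum defining $\|f\|_{\partial^{-1}(\partial\mathcal{D}\odot\mathcal{D})}$ one may take $g_{i}(0)=0$, so that $\|g_{i}\|_{\mathcal{D}}=\|g_{i}'\|_{L^{2}(dA)}$ and hence $\partial$ is an isometric isomorphism of $\partial^{-1}(\partial\mathcal{D}\odot\mathcal{D})$ onto $\partial\mathcal{D}\odot\mathcal{D}$; (b) $f\mapsto f'$ is a quotient map of $\mathcal{D}\odot\mathcal{D}$ onto $\partial(\mathcal{D}\odot\mathcal{D})$ with kernel the constants, so $\partial(\mathcal{D}\odot\mathcal{D})\cong\mathcal{D}\odot\mathcal{D}/\mathbb{C}$. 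In each of the two spaces the polynomials are dense: they are dense in $\mathcal{D}\odot\mathcal{D}$ (approximate the $\mathcal{D}$-factors by polynomials), hence dense in $\partial(\mathcal{D}\odot\mathcal{D})$ by (b); and a generating sum $\sum\phi_{i}h_{i}$ for $\partial\mathcal{D}\odot\mathcal{D}$ is a norm limit of polynomials $\sum p_{i}'q_{i}$.

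Next I would match the two preduals. By the Corollary identifying $(\mathcal{D}\odot\mathcal{D})^{*}$ with $\mathcal{X}$, the functionals on $\mathcal{D}\odot\mathcal{D}$ that annihilate the constants are exactly the $\Lambda_{b}$ with $b\in\mathcal{X}$ and $b(0)=0$; since then $\langle h,b\rangle_{\mathcal{D}}=\int_{\mathbb{D}}h'\,\overline{b'}\,dA$, under (b) such a functional acts on $P=f'\in\partial(\mathcal{D}\odot\mathcal{D})$ by $P\mapsto\int_{\mathbb{D}}P\,\overline{b'}\,dA$, with norm $\approx\|\,|b'|^{2}dA\,\|_{CM(\mathcal{D})}^{1/2}$. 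By the Corollary of Coifman--Murai, Tolokonnikov, and Rochberg--Wu, a functional on $\partial^{-1}(\partial\mathcal{D}\odot\mathcal{D})$ is, through the forms $K_{b}$, of the form $\psi\mapsto\sum K_{b}(g_{i},h_{i})=\int_{\mathbb{D}}\psi'\,\overline{b'}\,dA$ for a unique $b\in\mathcal{X}$ with $b(0)=0$, again with norm $\approx\|\,|b'|^{2}dA\,\|_{CM(\mathcal{D})}^{1/2}$; transporting along the isometry in (a) shows the dual of $\partial\mathcal{D}\odot\mathcal{D}$ to be the very same set of functionals $P\mapsto\int_{\mathbb{D}}P\,\overline{b'}\,dA$, with comparable norms. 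Thus $(\partial(\mathcal{D}\odot\mathcal{D}))^{*}$ and $(\partial\mathcal{D}\odot\mathcal{D})^{*}$ are a single space of functionals on $\mathcal{P}(\mathbb{D})$, parametrized by $\{b\in\mathcal{X}:b(0)=0\}$, with equivalent norms.

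Finally I would close by duality. The product rule gives, for $f=\sum g_{i}h_{i}$, that $f'=\sum g_{i}'h_{i}+\sum g_{i}h_{i}'$ lies in $\partial\mathcal{D}\odot\mathcal{D}$ with $\|f'\|_{\partial\mathcal{D}\odot\mathcal{D}}\le 2\sum\|g_{i}\|_{\mathcal{D}}\|h_{i}\|_{\mathcal{D}}$, so the identity map $\iota\colon\partial(\mathcal{D}\odot\mathcal{D})\to\partial\mathcal{D}\odot\mathcal{D}$ is bounded (and obviously injective). Under the identifications of the previous paragraph, $\iota^{*}$ carries the functional attached to $b$ to the functional attached to $b$, so by the norm comparisons $\|\iota^{*}\phi_{b}\|\approx\|\,|b'|^{2}dA\,\|_{CM(\mathcal{D})}^{1/2}\approx\|\phi_{b}\|$; hence $\iota^{*}$ is bounded below, $\iota$ is surjective, and, being injective and bounded, $\iota$ is an isomorphism. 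That is, $\partial(\mathcal{D}\odot\mathcal{D})=\partial\mathcal{D}\odot\mathcal{D}$ with equivalent norms, as claimed.

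The substance of the argument is concentrated in the second paragraph: one must verify that, after normalizing antiderivatives at the origin, the two duality pairings supplied by the two corollaries collapse to the single pairing $(P,b)\mapsto\int_{\mathbb{D}}P\,\overline{b'}\,dA$ over the common parameter set $\{b\in\mathcal{X}:b(0)=0\}$, with comparable dual norms. The constant-term bookkeeping --- why $b(0)$ drops out on both sides, and why $\|b\|_{\mathcal{X}}$ reduces there to $\|\,|b'|^{2}dA\,\|_{CM(\mathcal{D})}^{1/2}$ --- is the one point requiring genuine care; the product rule, the density of polynomials, and the adjoint argument are routine.
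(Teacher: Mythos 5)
Your proof is correct and takes essentially the same route the paper sketches: you identify both sides by matching their duals with $\mathcal{X}$ via the two preceding corollaries, which is exactly what the paper's one-line proof "combining the previous corollaries" means. You simply spell out the normalization at the origin, the quotient-by-constants bookkeeping, and the closed-range/adjoint argument that the paper leaves implicit.
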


In contrast
\[
\partial\left(  \mathcal{D}\odot\mathcal{D}\right)  \neq\partial
^{1/2}\mathcal{D}\odot\partial^{1/2}\mathcal{D}\text{.}%
\]
To see this note that $\partial^{1/2}\mathcal{D}\odot\partial^{1/2}%
\mathcal{D}=H^{2}\left(  \mathbb{D}\right)  \mathcal{\odot}H^{2}\left(
\mathbb{D}\right)  =H^{1}\left(  \mathbb{D}\right)  $ and that $f(z)=(\log
\left(  1-z\right)  )^{3/2}$ satisfies $f^{\prime}\in$ $\partial\left(
\mathcal{D}\odot\mathcal{D}\right)  ,$ $f^{\prime}\notin H^{1}.$

\subsection{Reformulation in Terms of Matrices.}

If $T_{b}$ is given by (\ref{hardy}) with $b(z)=\sum b_{n}z^{n}$ then the
matrix representation of $T_{b}$ with respect to the monomial basis is
$\left(  \bar{b}_{i+j}\right)  $. Nehari's theorem gives a boundedness
condition for such Hankel matrices; matrices $\left(  a_{i,j}\right)  $ for
which $a_{i,j}$ is a function of $i+j$. There are analogous results for Hankel
forms on Bergman spaces. Those forms have matrices
\begin{equation}
\left(  \left(  i+1\right)  ^{\alpha}\left(  j+1\right)  ^{\beta}\left(
i+j+1\right)  ^{\gamma}\bar{b}(i+j)\right) \label{A}%
\end{equation}
with $\alpha,\beta>0$ and are bounded if and only if $b(z)$ is in the Bloch
space. The criteria for (\ref{A}) to belong to the Schatten--von Neumann
classes is known if $\min\left\{  \alpha,\beta\right\}  >-1/2$ and it is known
that those results do not extend to $\min\left\{  \alpha,\beta\right\}
\leq-1/2.$ For all of this see \cite[Ch 6.8]{P}.

The matrix representations of the forms $T_{b}$ and $K_{b}$ with respect the
the basis of normalized monomials of $\mathcal{D}$ are of the form (\ref{A})
with $\left(  \alpha,\beta\right)  $ equal to $\left(  -1/2,-1/2\right)  $ in
the first case and $\left(  -1/2,1/2\right)  $ in the second.

\section{Preliminary Steps in the Proof}

\subsection{The Proof of (2) Given (1)}

Suppose $T_{b}$ is compact. For any holomorphic function $k(z)$ on
$\mathbb{D}$ and $r,$ $0<\,r<1,$ set $S_{r}k(z)=k(rz).$ A computation with
monomials verifies that
\[
T_{S_{r}b}\left(  f,g\right)  =T_{b}(S_{r}f,S_{r}g).
\]
As $r\rightarrow1,$ $S_{r}$ converges strongly to $I.$ Using this and the fact
that $T_{b}$ is compact we obtain $\lim\left\Vert T_{S_{r}b}-T_{b}\right\Vert
=0.$ Hence, by the first part of the theorem $\lim\left\Vert S_{r}%
b-b\right\Vert _{\mathcal{X}}=0.$ The Taylor coefficients of $S_{r}b$ decay
geometrically, hence $S_{r}b\in\mathcal{X}_{0}$ and thus $b\in\mathcal{X}_{0}$.

In the other direction note that if $b$ is a polynomial then $T_{b}$ is finite
rank and hence compact. If $\left\{  b_{n}\right\}  \subset\mathcal{P}%
(\mathbb{D)}$ is a sequence of polynomials which converge in norm to
$b\in\mathcal{X}_{0}$ then, by the first part of the theorem $T_{b}$ is the
norm limit of the $T_{b_{n}}$ and hence is also compact.

\subsection{The Proof of The Easy Direction of (1)}

Suppose that $\mu_{b}$ is a $\mathcal{D}$-Carleson measure. For $f,g\in
\mathcal{P}\left(  \mathbb{D}\right)  $ we have
\begin{align*}
\left\vert T_{b}\left(  f,g\right)  \right\vert  & =\left\vert f\left(
0\right)  g\left(  0\right)  \overline{b\left(  0\right)  }+\int
\limits_{\mathbb{D}}\left(  f^{\prime}\left(  z\right)  g\left(  z\right)
+f\left(  z\right)  g^{\prime}\left(  z\right)  \right)  \overline{b^{\prime
}\left(  z\right)  }dA\right\vert \\
& \leq\left\vert f\left(  0\right)  g\left(  0\right)  b\left(  0\right)
\right\vert +\int\limits_{\mathbb{D}}\left\vert f^{\prime}\left(  z\right)
g\left(  z\right)  b^{\prime}\left(  z\right)  \right\vert dA+\int
\limits_{\mathbb{D}}\left\vert f\left(  z\right)  g^{\prime}\left(  z\right)
b^{\prime}\left(  z\right)  \right\vert dA\\
& \leq\left\vert (fgb)(0)\right\vert +\left\Vert f\right\Vert _{\mathcal{D}%
}\left(  \int\nolimits_{\mathbb{D}}\left\vert g\right\vert ^{2}d\mu
_{b}\right)  ^{1/2}+\left\Vert g\right\Vert _{\mathcal{D}}\left(
\int\nolimits_{\mathbb{D}}\left\vert f\right\vert ^{2}d\mu_{b}\right)
^{1/2}\\
& \leq C\left(  \left\vert b\left(  0\right)  \right\vert +\left\Vert \mu
_{b}\right\Vert _{\mathcal{D}-Carleson}\right)  \left\Vert f\right\Vert
_{\mathcal{D}}\left\Vert g\right\Vert _{\mathcal{D}}\\
& =C\left\Vert b\right\Vert _{\mathcal{X}}\left\Vert f\right\Vert
_{\mathcal{D}}\left\Vert g\right\Vert _{\mathcal{D}}.
\end{align*}
Thus $T_{b}$ has a bounded extension to $\mathcal{D}\times\mathcal{D}$ with
$\left\Vert T_{b}\right\Vert \leq C\left\Vert b\right\Vert _{\mathcal{X}}.$

We note for later that if $T_{b}$ extends to a bounded bilinear form on
$\mathcal{D}$ then $b\in\mathcal{D}$, equivalently, $d\mu_{b}$ is a finite
measure. To see this note that for all $f\in\mathcal{P}\left(  \mathbb{D}%
\right)  ,$ $\left\vert \left\langle f,b\right\rangle _{\mathcal{D}%
}\right\vert =\left\vert T_{b}\left(  f,1\right)  \right\vert \leq\left\Vert
T_{b}\right\Vert \left\Vert f\right\Vert _{\mathcal{D}}\left\Vert 1\right\Vert
_{\mathcal{D}}$.\ Thus $b\in\mathcal{D}$ and
\begin{equation}
\left\Vert b\right\Vert _{\mathcal{D}}\leq C\left\Vert T_{b}\right\Vert
.\label{Dnorm}%
\end{equation}

\subsection{Disk Capacity and Disk Blowups}

To complete the proof of Theorem \ref{main} we must show that if $T_{b}$ is
bounded then $\mu_{b:}=\left\vert b^{\prime}\right\vert ^{2}dA$ is a
$\mathcal{D}$-Carleson measure. We will do this by showing that $\mu_{b}$
satisfies a capacitary condition introduced by Stegenga \cite{St}.

For an interval $I$ in the circle we let $I_{m}$ be its midpoint and
$z(I)=\left(  1-\left\vert I\right\vert /2\pi\right)  I_{m}$ be the associated
index point in the disk. In the other direction let $I(z)$ be the interval
such that $z(I(z))=z.$ Let $T(I)$ be the tent over $I,$ the convex hull of $I
$ and $z(I)$ and let $T\left(  z\right)  =T\left(  z\left(  I\right)  \right)
:=T\left(  I\right)  $. More generally, for any open subset $H$ of the circle
$\mathbb{T}$, we define $T\left(  H\right)  ,$ the \emph{tent region} of $H$
in the disk $\mathbb{D}$, by%
\[
T\left(  H\right)  =\bigcup\limits_{I\subset H}T\left(  I\right)  .
\]

For $G$ in the circle $\mathbb{T}$ define the capacity of $G$ by
\begin{equation}
Cap_{\mathbb{D}}G=\inf\left\{  \left\Vert \psi\right\Vert _{\mathcal{D}}%
^{2}:\psi\left(  0\right)  =0,\operatorname{Re}\psi\left(  z\right)
\geq1\text{ for }z\in G\right\}  .\label{extremal}%
\end{equation}
Stegenga \cite{St} has shown that $\mu$ is a $\mathcal{D}$-Carleson measure
exactly if for any finite collection of disjoint arcs $\left\{  I_{j}\right\}
_{j=1}^{N}$ in the circle $\mathbb{T}$ we have
\begin{equation}
\mu\left(  \bigcup\nolimits_{j=1}^{N}T\left(  I_{j}\right)  \right)  \leq
C\ Cap_{\mathbb{D}}\left(  \bigcup\nolimits_{j=1}^{N}I_{j}\right)
.\label{Steg}%
\end{equation}

We will need to understand how the capacity of a set changes if we expand it
in certain ways. For $I$ an open arc and $0<\rho\leq1$, let $I^{\rho}$ be the
arc concentric with $I$ having length $\left\vert I\right\vert ^{\rho}$.

\begin{definition}
[disk blowup]\label{diskblowup}For $G$ open in $\mathbb{T}$ we call%
\[
G_{\mathbb{D}}^{\rho}=\bigcup_{I\subset G}T\left(  I^{\rho}\right)
\]
the \emph{disk blowup} (of order $\rho$) of $G$.
\end{definition}

The important feature of the disk blowup is that it achieves a good geometric
separation between $\mathbb{D}\setminus G_{\mathbb{D}}^{\rho}$ and
$G_{\mathbb{D}}^{1}=T\left(  G\right)  .$ This plays a crucial role in using
Schur's test to estimate an integral later, as well as in estimating an error
term near the end of the paper.

\begin{lemma}
\label{geoseparation}Let $G$ be an open subset of the circle $\mathbb{T}$. If
$w\in G_{\mathbb{D}}^{1}=T\left(  G\right)  $ and $z\notin G_{\mathbb{D}%
}^{\rho}$ then $\left\vert z-w\right\vert \geq(1-\left\vert w\right\vert
^{2})^{\rho}$.

\begin{proof}
The inequality follows from the definition of $G_{\mathbb{D}}^{\rho}$ and the
inclusion $T\left(  I^{\rho}\right)  \subset\left\{  z:\left\vert z-z\left(
I\right)  \right\vert <2(1-\left\vert z\left(  I\right)  \right\vert )^{2\rho
}\right\}  .$
\end{proof}
\end{lemma}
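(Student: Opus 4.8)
The plan is to reduce the inequality to a statement about a single blown-up tent and then settle that by a planar estimate. Since $w\in T(G)=\bigcup_{I\subseteq G}T(I)$, fix an open arc $I\subseteq G$ with $w\in T(I)$. Because $I\subseteq G$ we have $T(I^{\rho})\subseteq G_{\mathbb{D}}^{\rho}$, so the hypothesis $z\notin G_{\mathbb{D}}^{\rho}$ already gives $z\notin T(I^{\rho})$. Hence it is enough to prove the purely geometric inclusion
\[
\{\zeta\in\mathbb{D}:|\zeta-w|<(1-|w|^{2})^{\rho}\}\subseteq T(I^{\rho})\qquad\text{for every }w\in T(I),
\]
that is, $\operatorname{dist}(w,\mathbb{D}\setminus T(I^{\rho}))\geq(1-|w|^{2})^{\rho}$; together with $z\notin T(I^{\rho})$ this is exactly the assertion of the lemma.

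To establish this inclusion I would begin with two elementary facts about a tent. First, $T(I)$ is contained in the sector over $I$, so $w/|w|\in I$; second, every point of $T(I)$ has $1-|w|\leq 1-|z(I)|=|I|/2\pi$, whence $1-|w|^{2}\leq 2\bigl(1-|z(I)|\bigr)$ and the radius $(1-|w|^{2})^{\rho}$ is comparable to $(1-|z(I)|)^{\rho}$, the natural scale of the blowup. Next, $I^{\rho}$ is the arc concentric with $I$ of length $|I|^{\rho}$, so $T(I^{\rho})$ is the convex hull of a much longer arc and its (much deeper) index point $z(I^{\rho})$, which sits at depth $|I|^{\rho}/2\pi$. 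The content of the blowup is that at the depths $1-|\zeta|\leq|I|/2\pi$ where $T(I)$ lives, $T(I^{\rho})$ still extends a distance comparable to $|I^{\rho}|=|I|^{\rho}$ on both sides of the axis of $I$, while its ``floor'' $z(I^{\rho})$ lies a distance comparable to $|I|^{\rho}$ below $w$. Estimating the distance from $w$ to the arc $I^{\rho}$ and to each of the two rectilinear sides of the triangle $T(I^{\rho})$ — which is precisely where one uses the shape bound $T(I^{\rho})\subset\{\zeta:|\zeta-z(I)|<2(1-|z(I)|)^{2\rho}\}$ together with the matching lower bound for the width of $T(I^{\rho})$ — then yields $\operatorname{dist}(w,\mathbb{D}\setminus T(I^{\rho}))\geq(1-|w|^{2})^{\rho}$.

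The main obstacle is this last planar estimate. The tent $T(I)$ is thinnest near its index point $z(I)$, so the delicate case is $w\in T(I)$ with $|w|$ close to $|z(I)|$, and one must check that replacing $I$ by the longer arc $I^{\rho}$ genuinely opens a neighbourhood of radius $(1-|w|^{2})^{\rho}$ around such $w$; in particular the floor $z(I^{\rho})$ must drop below $w$ by at least that much, which is exactly where the hypothesis $\rho\leq 1$ — making $|I|^{\rho}$ dominate $|I|$ for short arcs — enters. Everything else is the two short deductions displayed above, and the quantitative separation recorded here is what later feeds the Schur-test estimate and the control of the error term.
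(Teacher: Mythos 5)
Your reduction is the right one and is in fact the same reduction the paper's one-line proof is implicitly using: pick $I\subset G$ with $w\in T(I)$, observe $z\notin T(I^{\rho})$, and try to show that a disk around $w$ of radius $(1-|w|^{2})^{\rho}$ is contained in $T(I^{\rho})$. The paper phrases its (very terse) argument in terms of a ball centred at $z(I)$ rather than at $w$, but that is a cosmetic difference. The problem is that your write-up stops exactly where the content of the lemma lives: the planar inclusion is \emph{asserted} ("Estimating the distance \dots then yields"), not proved, and you yourself identify it as "the main obstacle."

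Worse, the one thing you do claim about that obstacle — that the hypothesis $\rho\le 1$ makes the floor drop far enough because $|I|^{\rho}$ dominates $|I|$ — does not actually close the gap, and a short computation shows the inclusion fails with the stated constant. Take $G=I$ a single arc of small length $\ell$ and $w=z(I)$, the tip of $T(I)$. Then $1-|w|=\ell/2\pi$, so $(1-|w|^{2})^{\rho}\approx(\ell/\pi)^{\rho}=\ell^{\rho}/\pi^{\rho}$. On the other hand the floor $z(I^{\rho})$ of $T(I^{\rho})$ sits at depth $|I^{\rho}|/2\pi=\ell^{\rho}/2\pi$, and since $\pi^{\rho}<2\pi$ for every $\rho\le 1$ this depth is \emph{strictly smaller} than the required radius $\ell^{\rho}/\pi^{\rho}$. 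So the ball $B\bigl(w,(1-|w|^{2})^{\rho}\bigr)$ sticks out below $T(I^{\rho})$, and the point of $\mathbb{D}$ just beyond $z(I^{\rho})$ on the radial axis lies outside $G_{\mathbb{D}}^{\rho}$ while being closer to $w$ than $(1-|w|^{2})^{\rho}$. In other words, the lemma as printed (and your attempted planar inclusion) only holds with a constant, $|z-w|\ge c_{\rho}(1-|w|^{2})^{\rho}$. That constant is harmless for the paper's later uses of the separation condition (it is fed into a Schur test where constants are absorbed, and the paper's own one-line proof cites an inclusion whose exponent $2\rho$ is itself plainly a typo), but a correct write-up must actually carry out the distance estimate to the floor and to the two lateral sides of $T(I^{\rho})$ and record the resulting constant, rather than assert that $\rho\le 1$ suffices.
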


It would be useful to us if we knew there were constants $C_{\rho},$
$0<\rho<1,$ such that%
\begin{equation}
Cap_{\mathbb{D}}\left(  \bigcup_{I\subset G}I^{\rho}\right)  \leq C_{\rho
}Cap_{\mathbb{D}}G.\label{B}%
\end{equation}
and%
\begin{equation}
\lim_{\rho\rightarrow1^{-}}C_{\rho}=1.\label{lim}%
\end{equation}
Bishop proved (\ref{B}) \cite{Bis} but did not obtain (\ref{lim}) and we could
not obtain it directly. In the next subsection we obtain Lemma \ref{contain},
an analog of (\ref{B}) and (\ref{lim}) in a tree model, and that will play an
important role in the proof. After we show that tree and disk are comparible,
Corollary \ref{capequiv}, then we will also have (\ref{lim}).

\subsection{Tree Capacity and Tree Blowups}

In our study of capacities and approximate extremals it will sometimes be
convenient to transfer our arguments to and from the Bergman tree
$\mathcal{T}$ and to work with the associated tree capacities. We now recall
the notation associated to $\mathcal{T}.$ Further properties of $\mathcal{T}$
are in the Appendix and a more extensive investigation with other applications
is in \cite{ArRoSa4}.

Let $\mathcal{T}$ be the standard Bergman tree in the unit disk $\mathbb{D}$.
That is $\mathcal{T=}\left\{  x\right\}  $ is the index set for the subsets
$\left\{  B_{x}\right\}  $ of $\mathbb{D}$ obtained by decomposing
$\mathbb{D},$ first with the circles $C_{k}=\left\{  z:\left\vert z\right\vert
=1-2^{-k}\right\}  ,$ $k=1,2,...$ and then for each $k$ making $2^{k}$ radial
cuts in the ring bounded by $C_{k}$ and $C_{k+1}.$ We refer to the $\left\{
B_{x}\right\}  $ as boxes and we emphasize the standard bijection between the
boxes and the intervals on the circle $\left\{  I(B_{x})\right\}  $ obtained
by radial projection of the boxes. This also induces a bijection with the
point set $\left\{  z(I(B_{x}))\right\}  $ in the disk$,$ furthermore
$z(I(B_{x}))\in B_{x}$. At times we will use the label $x$ to denote the point
$z(I(B_{x})).$

$\mathcal{T}$ is a rooted dyadic tree with root $\left\{  0\right\}  $ which
we denote $o.$ For a vertex $x$ of $\mathcal{T}$ we denote its immediate
predecessor by $x^{-1}$ and its two immediate successors by $x_{+}$ and
$x_{-}.$ We let $d(x)$ equal the number of nodes on the geodesic $[o,x]$. The
successor set of $x$ is $S(x)=\left\{  y\in\mathcal{T}:y\geq x\right\}  .$

We say that $S\subset\mathcal{T}$ is a \emph{stopping time} if no pair of
distinct points in $S$ are comparable in $\mathcal{T}$. Given stopping times
$E,F\subset\mathcal{T}$ we say that $F\succ E$ if for every $x\in F$ there is
$y\in E$ above $x,$ i.e., with $x>y$. For stopping times $F\succ E$ denote by
$\mathcal{G}\left(  E,F\right)  $ the union of all those geodesics connecting
a point of $x\in F$ to the point $y\in E$ above it.

The bijections between $\left\{  B_{x}\right\}  $, $\left\{  I(B_{x})\right\}
$, and $\left\{  z(I(B_{x}))\right\}  $ induce bijections between other sets.
We will be particularly interested in three types of sets:

\begin{itemize}
\item \emph{stopping times} $W$ in the tree $\mathcal{T}$;

\item $\mathcal{T}$-\emph{open subsets} $G$ of the circle $\mathbb{T}$;

\item $\mathcal{T}$-\emph{tent regions} $\Gamma$ of the disk $\mathbb{D}$.
\end{itemize}

The bijections are given as follows. For $W$ a \emph{stopping time} in
$\mathcal{T}$, its associated $\mathcal{T}$-\emph{open set} in $\mathbb{T}$ is
the $\mathcal{T}$-shadow $S_{\mathcal{T}}\left(  W\right)  =\cup\left\{
I(x):x\in W\right\}  $ of $W$ on the circle (this also \emph{defines} the
collection of $\mathcal{T}$-open sets). The associated $\mathcal{T}%
$-\emph{tent region} in $\mathbb{D}$ is $T_{\mathcal{T}}\left(  W\right)
=\cup\left\{  T\left(  I\left(  \kappa\right)  \right)  :\kappa\in W\right\}
$ (this also \emph{defines} the collection of $\mathcal{T}$-tent regions).

At times we will identify a stopping time $W=W_{\mathcal{T}}$ in a tree
$\mathcal{T}$ with its associated $\mathcal{T}$-shadow on the circle and its
$\mathcal{T}$-tent region in the disk and will use $W\ $or $W_{\mathcal{T}}$
to denote any of them. When we do this the exact interpretation will be clear
from the context.

Note that for any open subset $E$ of the circle $\mathbb{T}$, there is a
unique $\mathcal{T}$-open set $G\subset E$ such that $E\setminus G$ is at most
countable. We often informally identify the open sets $E$ and $G$.

For a functions $k,$ $K$ defined on $\mathcal{T}$ set%
\[
Ik\left(  x\right)  =\sum_{y\in\left[  o,x\right]  }k\left(  y\right)  ,\text{
}\Delta K(x)=K(x)-K(x^{-})
\]
with the convention that $K(o^{-})=0.$

For $\Omega\subseteq\mathcal{T}$ a point $x\in\mathcal{T}$ is in the interior
of $\Omega$ if $x,x^{-1},$ $x_{+},$ $x_{-}\in\Omega$. A function $H$ is
\textit{harmonic} in $\Omega$ if
\begin{equation}
H(x)=\frac{1}{3}[H(x^{-1})+H(x_{+})+H(x_{-})]\label{mvp}%
\end{equation}
for every point $x$ which is interior in $\Omega$. If $H=Ih$ is harmonic then
for all $x$ in the interior of $\Omega$
\begin{equation}
h(x)=h(x_{+})+h(x_{-}).\label{addition}%
\end{equation}

Let $Cap_{\mathcal{T}}$ be the tree capacity associated with $\mathcal{T}$:%
\begin{equation}
Cap_{\mathcal{T}}\left(  E\right)  =\inf\left\{  \left\Vert f\right\Vert
_{\ell^{2}(\mathcal{T)}}^{2}:If\geq1\text{ on }E\right\}  .\label{captree}%
\end{equation}
More generally, if $E,F\subset$ $\mathcal{T}$ are disjoint stopping times with
$F\succ E,$ the capacity of the pair $\left(  E,F\right)  $, commonly known as
a condenser, is given by%
\begin{equation}
Cap_{\mathcal{T}}\left(  E,F\right)  =\inf\left\{  \left\Vert f\right\Vert
_{\ell^{2}(\mathcal{T)}}^{2}:If\geq1\text{ on }F,\text{ }\mbox{supp}(f)\subset%
{\textstyle\bigcup\limits_{e\in E}}
S(e)\right\}  .\label{capcondenser}%
\end{equation}

Let $\mathcal{T}_{\theta}$ be the rotation of the tree $\mathcal{T}$ by the
angle $\theta$, and let $Cap_{\mathcal{T}_{\theta}}$ be the tree capacity
associated with $\mathcal{T}_{\theta}$ as in (\ref{captree}), and extend the
definition to open subsets $G$ of the circle $\mathbb{T}$ by,%
\[
Cap_{\mathcal{T}_{\theta}}\left(  G\right)  =\inf\left\{  \sum_{\kappa
\in\mathcal{T}_{\theta}}f\left(  \kappa\right)  ^{2}:If\left(  \beta\right)
\geq1\text{ for }\beta\in\mathcal{T}_{\theta}\text{, }I\left(  \beta\right)
\subset G\right\}  .
\]
This is consistent with the definition of tree capacity of a stopping time $W
$ in $\mathcal{T}_{\theta};$ that is, if $G=\cup\left\{  I\left(
\kappa\right)  :\kappa\in W\right\}  $ we have
\[
Cap_{\mathcal{T}_{\theta}}\left(  W\right)  =Cap_{\mathcal{T}_{\theta}}\left(
\left\{  o\right\}  ,W\right)  =Cap_{\mathcal{T}_{\theta}}\left(  G\right)  .
\]
When the angle $\theta$ is not important, we will simply write $\mathcal{T}$
with the understanding that all results have analogues with $\mathcal{T}%
_{\theta}$ in place of $\mathcal{T}$.

We will use functions on the disk which are approximate extremals for
measuring capacity, that is functions for which the equality in
(\ref{extremal}) is approximately attained. A tool in doing that is an
analysis of the model problems on a tree. The following result about tree
capacities and extremals is proved in the Appendix.

\begin{proposition}
\label{extcondenser}Suppose $E,$ $F\subset$ $\mathcal{T}$ are disjoint
stopping times with $F\succ E.$

\begin{enumerate}
\item There is an extremal function $H=Ih$ such that $Cap(E,F)=\Vert
h\Vert_{\ell^{2}}^{2}$.

\item The function $H$ is harmonic on $\mathcal{T}\setminus(E\cup F)$.

\item If $S$ is a stopping time in $\mathcal{T}$, then $\sum_{\kappa\in S}$
$\left\vert h\left(  \kappa\right)  \right\vert \leq2Cap(E,F)$.

\item The function $h$ is positive on $\mathcal{G}\left(  E,F\right)  $, and
zero elsewhere.
\end{enumerate}
\end{proposition}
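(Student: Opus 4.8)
\emph{Setup and existence.} The plan is to realise $H=Ih$ as a Hilbert‑space projection and then extract (2)--(4) from its minimality. To keep the variational steps clean I will assume $E$ and $F$ finite, so that $\mathcal G(E,F)$ is finite; the general case follows by exhausting $F$. Let
\[
\mathcal A=\bigl\{\,f\in\ell^{2}(\mathcal T):\ If\ge 1\ \text{on}\ F,\ \operatorname{supp}f\subseteq\textstyle\bigcup_{e\in E}S(e)\,\bigr\}.
\]
Each $f\mapsto If(x)$, $x\in F$, is a bounded linear functional and the support condition cuts out a closed subspace, so $\mathcal A$ is closed and convex; it is nonempty (put mass $|[e_f,f]|^{-1}$ on each geodesic from $f\in F$ down to the $e\in E$ below it) and avoids $0$. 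Let $h$ be the point of $\mathcal A$ nearest the origin and set $H=Ih$; then $\|h\|_{\ell^{2}}^{2}=Cap(E,F)$, which is (1). Since $t\mapsto t^{+}$ and $t\mapsto t\wedge 1$ are contractions of $\mathbb R$, replacing $H$ by $H^{+}$, resp.\ $H\wedge 1$, again gives an element of $\mathcal A$ (recall $H\equiv 0$ off $\bigcup_e S(e)$ and $H\ge 1$ on $F$) without increasing the Dirichlet sum $\sum_\kappa(\Delta H(\kappa))^{2}=\|h\|_{\ell^{2}}^{2}$; uniqueness of the minimiser forces $0\le H\le 1$, hence $H\equiv 1$ on $F$.

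\emph{Harmonicity, the Lagrange form of $h$, and (3).} Let $x$ be interior in $\mathcal T\setminus(E\cup F)$. If $x\notin\bigcup_e S(e)$ its parent and two children are also outside $\bigcup_e S(e)$, so $H$ vanishes on all of them and (\ref{mvp}) is trivial. If $x\in\bigcup_e S(e)$ then so do its three neighbours, so for every $t\in\mathbb R$ the function obtained by adding $t$ to $H(x)$ still lies in $\mathcal A$ (no constraint involves $x$, as $x\notin F$), and minimality kills the coefficient of $t$ in the resulting change $2t\bigl(\Delta H(x)-\Delta H(x_{+})-\Delta H(x_{-})\bigr)+3t^{2}$ of $\|h\|^{2}$; thus $h(x)=h(x_{+})+h(x_{-})$, i.e.\ (\ref{addition}), which is (2). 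The program is convex with finitely many affine constraints (strictly satisfiable), so there are multipliers $\lambda_{f}\ge 0$, $f\in F$, with
\[
h(\kappa)=\sum_{f\in F:\,f\ge\kappa}\lambda_{f}\ \ \bigl(\kappa\in\textstyle\bigcup_{e}S(e)\bigr),\qquad h(\kappa)=0\ \text{otherwise},\qquad \lambda_{f}\bigl(H(f)-1\bigr)=0 .
\]
In particular $h\ge 0$, and $h(\kappa)\ne 0$ forces $\kappa\ge e$ for some $e\in E$ (support) and $\kappa\le f$ for some $f\in F$ (nonempty sum), so $\kappa\in[e,f]\subseteq\mathcal G(E,F)$: thus $\operatorname{supp}h\subseteq\mathcal G(E,F)$, half of (4). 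Finally $\|h\|^{2}=\sum_\kappa h(\kappa)\sum_{f\ge\kappa}\lambda_{f}=\sum_{f}\lambda_{f}(Ih)(f)=\sum_{f}\lambda_{f}H(f)=\sum_{f}\lambda_{f}$ by complementary slackness, so $Cap(E,F)=\sum_{f}\lambda_{f}$; and since any stopping time $S$ meets each geodesic $[o,f]$ at most once, $\sum_{\kappa\in S}|h(\kappa)|=\sum_{f}\lambda_{f}\,\#\{\kappa\in S:\kappa\le f\}\le\sum_{f}\lambda_{f}=Cap(E,F)\le 2\,Cap(E,F)$, which is (3).

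\emph{Strict positivity on $\mathcal G(E,F)$.} It remains to rule out $h(v)=0$ for $v\in\mathcal G(E,F)$; suppose such $v$ exists and choose one of maximal depth $d(v)$. Note $h(v)=0$ gives $H(v^{-1})=H(v)$, and $H=Ih$ with $h\ge 0$ makes $H$ nondecreasing along geodesics. \textit{If $v\in F$:} then $H(v)=1$, hence $H(v^{-1})=1$; since $H(e^{-1})=0$ for the $e\in E$ below $v$, there is a vertex $\mu\in[e,v)$, $\mu\notin F$, with $H(\mu^{-1})<1=H(\mu)$, and then $H\equiv 1$ on $S(\mu)$, so $h(\mu_{+})=h(\mu_{-})=0$ while $h(\mu)=1-H(\mu^{-1})>0$. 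Subtracting a small $\delta>0$ from $H(\mu)$ preserves every constraint --- the induced change of $h$ is $-\delta$ at $\mu$ and $+\delta$ at each of $\mu_{+},\mu_{-}$, each $x\in F$ with $x\ge\mu$ lies above exactly one of $\mu_{+},\mu_{-}$, and $\mu\notin F$ --- and changes $\|h\|^{2}$ by $-2\delta\bigl(1-H(\mu^{-1})\bigr)+3\delta^{2}<0$, contradicting minimality. \textit{If $v\notin F$:} pick $f\in F$ with $v<f$ and let $v_{+}$ be the child of $v$ on $[v,f]$; by maximality of $d(v)$, $h(v_{+})>0$. Adding a small $\delta>0$ to $H(v)$ again preserves all constraints (change of $h$ is $+\delta$ at $v$ and $-\delta$ at $v_{+},v_{-}$, and $v\notin F$) and changes $\|h\|^{2}$ by $3\delta^{2}-2\delta\bigl(h(v_{+})+h(v_{-})\bigr)<0$ --- again a contradiction. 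Hence $h>0$ on all of $\mathcal G(E,F)$, completing (4).

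\emph{Where the difficulty lies.} Parts (1)--(3) are routine Hilbert‑space and Lagrange‑duality bookkeeping; the delicate point is the strict positivity in (4). To turn a single zero of $h$ on the bridge $\mathcal G(E,F)$ into an energy‑decreasing local perturbation one needs a supply of charge‑free vertices to push mass onto; the contraction bound $H\le 1$ (which makes $H\equiv 1$ on $S(\mu)$ the instant $H(\mu)=1$) together with the maximal‑depth choice of $v$ is precisely what produces that supply, and the fact that every vertex of the dyadic tree has only two children is what lets the perturbation remain feasible.
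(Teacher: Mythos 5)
Your argument is correct, and it is a genuinely different route from the paper's. The paper proves Proposition \ref{extcondenser} by reduction: it views each $e\in E$ as the root of the subtree $S(e)$, invokes the single-pole result (Proposition \ref{extremalfunction}) to get an extremal $h_e$ for $Cap_{\mathcal T_e}(F\cap S(e))$, and then checks that $h=\sum_e h_e$ does the job. You instead attack the condenser directly: realize $h$ as the Hilbert-space projection of $0$ onto the closed convex set $\mathcal A$, use the contraction trick $H\mapsto H^{+},\,H\wedge 1$ to get $0\le H\le 1$, read off harmonicity from first-order optimality, and — most usefully — extract the Lagrange representation $h(\kappa)=\sum_{f\ge\kappa}\lambda_f$ with $\lambda_f\ge 0$, which makes (3) (and the support statement in (4)) essentially immediate: since a stopping time meets each geodesic $[o,f]$ at most once, $\sum_{\kappa\in S}h(\kappa)\le\sum_f\lambda_f=\|h\|^2=Cap(E,F)$, so you actually get the sharper constant $1$ in place of the paper's $2$. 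Your strict-positivity argument via a maximal-depth zero and a local energy-decreasing perturbation is also cleaner and more self-contained than the paper's "straightforward to check" reduction. Two small caveats: you restrict to finite $E,F$ and defer the general case to exhaustion, but you do not verify that strict positivity (an open condition) survives the limit — the paper's Proposition \ref{extremalfunction} handles this with a monotonicity/maximum-principle argument for $H_n$, and you would need something similar; in practice this is benign because the stopping times used in the paper arise from a finite union of arcs. Also, your Lagrange step silently uses Slater's condition for the existence of multipliers, which holds here (scale your feasible point by $1+\epsilon$), and is worth stating.
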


\begin{definition}
[stopping time blowup]\label{stopblowup}Given $0\leq\rho\leq1$ and a stopping
time $W$ in\ a tree $\mathcal{T}$, define the \emph{stopping time blowup}
$W_{\mathcal{T}}^{\rho}$ of $W$ in $\mathcal{T}$ as the set of minimal tree
elements in $\left\{  R^{\rho}\kappa:\kappa\in\mathcal{T}_{\theta}\right\}  $,
where $R^{\rho}\kappa$ denotes the unique element in the tree $\mathcal{T}$
satisfying%
\begin{align}
o  & \leq R^{\rho}\kappa\leq\kappa,\label{rhoroot}\\
\rho d\left(  \kappa\right)   & \leq d\left(  R^{\rho}\kappa\right)  <\rho
d\left(  \kappa\right)  +1.\nonumber
\end{align}

\end{definition}

Clearly $W_{\mathcal{T}}^{\rho}$ is a stopping time in $\mathcal{T}$. Note
that $R^{1}\kappa=\kappa$. The element $R^{\rho}\kappa$ can be thought of as
the "$\rho^{th}$ root of $\kappa$" since $\left\vert R^{\rho}\kappa\right\vert
=2^{-d\left(  R^{\rho}\kappa\right)  }\approx2^{-\rho d\left(  \kappa\right)
}=\left\vert \kappa\right\vert ^{\rho}$.

If $W$ is a stopping time for $\mathcal{T}$ and $W_{\mathcal{T}}^{\rho}$ is
the stopping time blowup of $W$, then there is a good estimate for the tree
capacity of $W_{\mathcal{T}}^{\rho}$ given in Lemma \ref{contain} below:
$Cap_{\mathcal{T}}\left(  \left\{  o\right\}  ,W_{\mathcal{T}}^{\rho}\right)
\leq\rho^{-2}Cap_{\mathcal{T}}\left(  \left\{  o\right\}  ,W\right)  .$
Unfortunately there is not a good condenser estimate of the form
$Cap_{\mathcal{T}}\left(  W_{\mathcal{T}}^{\rho},W\right)  \leq C_{\rho
}Cap_{\mathcal{T}}\left(  \left\{  o\right\}  ,W\right)  ;$ the left side can
be infinite when the right side is finite. We now introduce another type of
blowup, a tree analog of the disk blowup, for which we do have an effective
condenser estimate. We do this using a capacitary extremal function and a
comparison principle. Let $W$ be a stopping time in $\mathcal{T}$. By
Proposition \ref{extcondenser}, there is a unique extremal function $H=Ih$
such that
\begin{align}
Ih(x)  & =H\left(  x\right)  =1\text{ for }x\in W,\label{ext}\\
Cap_{\mathcal{T}}W  & =\left\Vert h\right\Vert _{\ell^{2}}^{2}.\nonumber
\end{align}

\begin{definition}
[capacitary blowup]\label{capblowup}Given a stopping time $W$ in $\mathcal{T}%
$, the corresponding extremal $H$ satisfying (\ref{ext}), and $0<\rho<1$,
define the \emph{capacitary blowup} $\widehat{W_{\mathcal{T}}^{\rho}}$ of $W$
by%
\[
\widehat{W_{\mathcal{T}}^{\rho}}=\left\{  t\in\mathcal{G}\left(  \left\{
o\right\}  ,W\right)  :H\left(  t\right)  \geq\rho\text{ and }H\left(
x\right)  \leq\rho\text{ for }x<t\right\}  .
\]

\end{definition}

Clearly $\widehat{W_{\mathcal{T}}^{\rho}}$ is a stopping time in $\mathcal{T}$.

\begin{lemma}
\label{newblowup}$Cap_{\mathcal{T}}\widehat{W_{\mathcal{T}}^{\rho}}\leq
\rho^{-2}Cap_{\mathcal{T}}W.$

\begin{proof}
Let $H$ be the extremal for $W$ in (\ref{ext}) and set $h=\Delta H,$ $h^{\rho
}=\frac{1}{\rho}h$ and $H^{\rho}=\frac{1}{\rho}H.$ Then $H^{\rho} $ is a
candidate for the infimum in the definition of capacity of $\widehat
{W_{\mathcal{T}}^{\rho}}$, and hence by the "comparison principle",
\[
Cap_{\mathcal{T}}\widehat{W_{\mathcal{T}}^{\rho}}\leq\left\Vert h^{\rho
}\right\Vert _{\ell^{2}}^{2}=\left(  \frac{1}{\rho}\right)  ^{2}\left\Vert
h\right\Vert _{\ell^{2}}^{2}=\rho^{-2}Cap_{\mathcal{T}}W.
\]

\end{proof}
\end{lemma}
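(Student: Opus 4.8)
The plan is to exhibit an explicit competitor for the infimum defining $Cap_{\mathcal{T}}\widehat{W_{\mathcal{T}}^{\rho}}$ built by rescaling the extremal for $W$. Let $H=Ih$ be the extremal function for $W$ from Proposition \ref{extcondenser}, so that $H=1$ on $W$ and $\|h\|_{\ell^{2}}^{2}=Cap_{\mathcal{T}}W$. Set $h^{\rho}=\rho^{-1}h$ and $H^{\rho}=Ih^{\rho}=\rho^{-1}H$. The first step is to verify that $H^{\rho}$ is admissible for $\widehat{W_{\mathcal{T}}^{\rho}}$, i.e.\ that $Ih^{\rho}\geq 1$ on $\widehat{W_{\mathcal{T}}^{\rho}}$. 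This is immediate from the very definition of the capacitary blowup: each $t\in\widehat{W_{\mathcal{T}}^{\rho}}$ satisfies $H(t)\geq\rho$, hence $H^{\rho}(t)=\rho^{-1}H(t)\geq 1$.

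The second step is simply to estimate the $\ell^{2}$ norm of the competitor:
\[
Cap_{\mathcal{T}}\widehat{W_{\mathcal{T}}^{\rho}}\leq\|h^{\rho}\|_{\ell^{2}}^{2}=\rho^{-2}\|h\|_{\ell^{2}}^{2}=\rho^{-2}Cap_{\mathcal{T}}W,
\]
which is exactly the claimed bound. (Here one uses that $Cap_{\mathcal{T}}\widehat{W_{\mathcal{T}}^{\rho}}$ means $Cap_{\mathcal{T}}(\{o\},\widehat{W_{\mathcal{T}}^{\rho}})$, and that although the definition of $Cap_{\mathcal{T}}$ in \eqref{captree} involves $If\geq 1$ on the set, the function $h^{\rho}$ clearly has $h^{\rho}\geq 0$ on $\mathcal{G}(\{o\},W)$ by part (4) of Proposition \ref{extcondenser}, so the competitor is legitimate; no support restriction is needed when the lower plate is $\{o\}$.)

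There is essentially no obstacle here — the lemma is a one-line consequence of the definition of $\widehat{W_{\mathcal{T}}^{\rho}}$, which was deliberately designed so that the rescaled extremal is admissible. The only point requiring the slightest care is to observe that one does not need $h^{\rho}$ to be supported in any particular successor set, because the relevant condenser has lower plate $\{o\}$, i.e.\ the bound is for $Cap_{\mathcal{T}}(\{o\},\widehat{W_{\mathcal{T}}^{\rho}})=Cap_{\mathcal{T}}\widehat{W_{\mathcal{T}}^{\rho}}$; hence the "comparison principle" is nothing more than the fact that any admissible competitor bounds the infimum from above. The real content — namely that $\widehat{W_{\mathcal{T}}^{\rho}}$ also enjoys a good \emph{condenser} estimate $Cap_{\mathcal{T}}(\widehat{W_{\mathcal{T}}^{\rho}},W)\leq C_{\rho}Cap_{\mathcal{T}}W$ with $C_{\rho}\to 1$, and the comparison with the disk blowup — is deferred to later lemmas (Lemma \ref{contain} and Corollary \ref{capequiv}) and is not part of this statement.
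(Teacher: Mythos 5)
Your proof is correct and is essentially identical to the paper's: both rescale the extremal $H$ for $W$ by $\rho^{-1}$ and observe that the resulting function is admissible for $\widehat{W_{\mathcal{T}}^{\rho}}$ by the very definition of the capacitary blowup, yielding the bound by the comparison principle. Your version spells out the admissibility check and the remark about the lower plate $\{o\}$ a bit more explicitly, but there is no difference in substance.
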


The next lemma is used in the proof of our main estimate, (\ref{maintheta})
and it requires an upper bound on $Cap_{\mathbb{D}}\left(  G\right)  .$
However (\ref{maintheta}) is straightforward if $Cap_{\mathbb{D}}\left(
G\right)  $ bounded away from zero so that restriction is not a problem. In
fact, moving forward we will assume, at times implicitly, that
$Cap_{\mathbb{D}}\left(  G\right)  $ is not large.

\begin{lemma}
\label{newcondenser}$Cap_{\mathcal{T}}\left(  W,\widehat{W_{\mathcal{T}}%
^{\rho}}\right)  \leq\frac{4}{\left(  1-\rho\right)  ^{2}}Cap_{\mathcal{T}}W $
provided $Cap_{\mathcal{T}}W\leq\left(  1-\rho\right)  ^{2}/4$.

\begin{proof}
Let $H$ be the extremal for $W$ in (\ref{ext}). For $t\in\widehat
{W_{\mathcal{T}}^{\rho}}$ we have by our assumption,
\[
h\left(  t\right)  \leq\left\Vert h\right\Vert _{\ell^{2}}\leq\sqrt
{Cap_{\mathcal{T}}W}\leq\frac{1}{2}\left(  1-\rho\right)  ,
\]
and so%
\[
H\left(  t\right)  =H\left(  t^{-}\right)  +h\left(  t\right)  \leq\rho
+\frac{1}{2}\left(  1-\rho\right)  =\frac{1+\rho}{2}.
\]
If we define $\widetilde{H}\left(  t\right)  =\frac{2}{1-\rho}\left\{
H\left(  t\right)  -\frac{1+\rho}{2}\right\}  $, then $\widetilde{H}\leq0$ on
$\widehat{W_{\mathcal{T}}^{\rho}}$ and $\widetilde{H}=1$ on $W$. Thus
$\widetilde{H}$ is a candidate for the capacity of the condenser and so by the
"comparison principle"
\begin{align*}
Cap_{\mathcal{T}}\left(  W,\widehat{W_{\mathcal{T}}^{\rho}}\right)   &
\leq\left\Vert \bigtriangleup\widetilde{H}\right\Vert _{\ell^{2}\left(
\mathcal{G}\left(  W_{\mathcal{T}}^{\rho},W\right)  \right)  }^{2}%
\leq\left\Vert \bigtriangleup\widetilde{H}\right\Vert _{\ell^{2}\left(
\mathcal{T}\right)  }^{2}\\
& =\left(  \frac{2}{1-\rho}\right)  ^{2}\left\Vert h\right\Vert _{\ell
^{2}\left(  \mathcal{T}\right)  }^{2}=\frac{4}{\left(  1-\rho\right)  ^{2}%
}Cap_{\mathcal{T}}W.
\end{align*}

\end{proof}
\end{lemma}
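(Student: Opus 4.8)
We want to bound the condenser capacity $Cap_{\mathcal{T}}(W,\widehat{W_{\mathcal{T}}^{\rho}})$, i.e.\ the infimum of $\|f\|_{\ell^2}^2$ over $f$ supported on $\bigcup_{e\in\widehat{W_{\mathcal{T}}^{\rho}}}S(e)$ with $If\ge 1$ on $W$. The natural candidate is a renormalized version of the extremal $H=Ih$ for $W$ itself: $H$ already equals $1$ on $W$, and by definition $\widehat{W_{\mathcal{T}}^{\rho}}$ is the first place (along geodesics toward $W$) where $H$ reaches the level $\rho$. So a rescaled affine function of $H$ that vanishes at level $\approx\rho$ and equals $1$ at $W$ should do the job, provided we can check it vanishes (or is $\le 0$) on all of $\widehat{W_{\mathcal{T}}^{\rho}}$ and is supported in the right set.

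**The plan.** First invoke Proposition~\ref{extcondenser} (or rather (\ref{ext})) to get the extremal $H=Ih$ with $H=1$ on $W$ and $\|h\|_{\ell^2}^2=Cap_{\mathcal{T}}W$. The key pointwise estimate is that on $\widehat{W_{\mathcal{T}}^{\rho}}$, where by the stopping-time definition $H(t^-)\le\rho$, we have $H(t)=H(t^-)+h(t)$, and $|h(t)|\le\|h\|_{\ell^2}=\sqrt{Cap_{\mathcal{T}}W}\le\tfrac12(1-\rho)$ by the hypothesis $Cap_{\mathcal{T}}W\le(1-\rho)^2/4$; hence $H(t)\le\rho+\tfrac12(1-\rho)=\tfrac{1+\rho}{2}$ on $\widehat{W_{\mathcal{T}}^{\rho}}$. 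Now define the affine rescaling $\widetilde H=\tfrac{2}{1-\rho}\bigl(H-\tfrac{1+\rho}{2}\bigr)$, so that $\widetilde H\le 0$ on $\widehat{W_{\mathcal{T}}^{\rho}}$ and $\widetilde H=1$ on $W$. Next I must check admissibility: $\widetilde H$ differs from $H$ by an affine transformation, so $\Delta\widetilde H=\tfrac{2}{1-\rho}h$ is supported where $h$ is, and by part (4) of Proposition~\ref{extcondenser} $h$ is supported on $\mathcal{G}(\{o\},W)$; since every such geodesic passes through $\widehat{W_{\mathcal{T}}^{\rho}}$, the support of $\Delta\widetilde H$ below $W$ lies in $\bigcup_{e\in\widehat{W_{\mathcal{T}}^{\rho}}}S(e)$, and $I(\Delta\widetilde H)=\widetilde H\ge 1$ on $W$. (Strictly, one wants an admissible function that is $\ge 1$ on $W$ and supported appropriately; truncating $\widetilde H$ at $0$ from below only decreases the $\ell^2$ norm of the increments, so it is harmless — this is the ``comparison principle'' the paper keeps invoking.) Finally compute $Cap_{\mathcal{T}}(W,\widehat{W_{\mathcal{T}}^{\rho}})\le\|\Delta\widetilde H\|_{\ell^2}^2=\bigl(\tfrac{2}{1-\rho}\bigr)^2\|h\|_{\ell^2}^2=\tfrac{4}{(1-\rho)^2}Cap_{\mathcal{T}}W$.

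**Main obstacle.** The only genuinely non-routine point is the justification that the rescaled-and-truncated $\widetilde H$ is a legitimate competitor for the condenser capacity — i.e.\ that its relevant increments are supported inside $\bigcup_{e\in\widehat{W_{\mathcal{T}}^{\rho}}}S(e)$. This rests on the structural fact (Proposition~\ref{extcondenser}(4)) that the extremal increment function $h$ lives precisely on $\mathcal{G}(\{o\},W)$, together with the observation that $\widehat{W_{\mathcal{T}}^{\rho}}$ is a stopping time lying on $\mathcal{G}(\{o\},W)$ that separates $o$ from $W$, so that restricting $h$ to $\bigcup_{e\in\widehat{W_{\mathcal{T}}^{\rho}}}S(e)$ still yields a function whose integral reaches $1$ at $W$ (the part of the $h$-mass strictly above $\widehat{W_{\mathcal{T}}^{\rho}}$ only contributes the level $H\le\rho$, which after the affine shift becomes $\le 0$ and is what the truncation discards). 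Everything else is a one-line arithmetic chain. I would present it exactly as above.
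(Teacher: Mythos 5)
Your proof is correct and follows essentially the same approach as the paper's: take the extremal $H=Ih$ for $W$, use the hypothesis $Cap_{\mathcal{T}}W\le(1-\rho)^2/4$ together with $H(t^-)\le\rho$ to get $H(t)\le\tfrac{1+\rho}{2}$ on $\widehat{W_{\mathcal{T}}^{\rho}}$, rescale affinely to $\widetilde H=\tfrac{2}{1-\rho}(H-\tfrac{1+\rho}{2})$, and apply the comparison principle. The only difference is presentational — you spell out explicitly the admissibility/support argument (restriction of $\Delta\widetilde H$ to $\bigcup_{e\in\widehat{W_{\mathcal{T}}^{\rho}}}S(e)$ via Proposition~\ref{extcondenser}(4)) that the paper compresses into the chain $\|\Delta\widetilde H\|_{\ell^2(\mathcal{G}(\widehat{W_{\mathcal{T}}^{\rho}},W))}^2\le\|\Delta\widetilde H\|_{\ell^2(\mathcal{T})}^2$ under the heading of the ``comparison principle.''
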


We also have good \emph{tree} separation inherited from the stopping time
blowup $W_{\mathcal{T}}^{\rho}$. This gives our substitute for (\ref{B}) and
(\ref{lim}).

\begin{lemma}
\label{contain}$W_{\mathcal{T}}^{\rho}\subset\widehat{W_{\mathcal{T}}^{\rho}}$
as open subsets of the circle or, equivalently, as $\mathcal{T}$-tent regions
in the disk. Consequently $Cap_{\mathcal{T}}W_{\mathcal{T}}^{\rho}\leq
\rho^{-2}Cap_{\mathcal{T}}W$.

\begin{proof}
The restriction of $H$ to a geodesic is a concave function of distance from
the root, and so if $o<z<w\in W$, then%
\[
H\left(  z\right)  \geq\left(  1-\frac{d\left(  z\right)  }{d\left(  w\right)
}\right)  H\left(  o\right)  +\frac{d\left(  z\right)  }{d\left(  w\right)
}H\left(  w\right)  =\frac{d\left(  z\right)  }{d\left(  w\right)  }\geq
\rho,\ z\in\widehat{W_{\mathcal{T}}^{\rho}},
\]
and this proves $W_{\mathcal{T}}^{\rho}\subset\widehat{W_{\mathcal{T}}^{\rho}%
}$. The inequality now follows from Lemma \ref{newblowup}.
\end{proof}
\end{lemma}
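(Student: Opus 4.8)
The plan is to prove the set inclusion $W_{\mathcal{T}}^{\rho}\subset\widehat{W_{\mathcal{T}}^{\rho}}$ directly from the two defining properties, and then harvest the capacity inequality for free from the monotonicity of capacity together with Lemma \ref{newblowup}. Recall that $\widehat{W_{\mathcal{T}}^{\rho}}$ consists of those $t$ on a geodesic from $o$ to $W$ at which the extremal $H$ first crosses the level $\rho$, while $W_{\mathcal{T}}^{\rho}$ consists of the minimal tree elements of the form $R^{\rho}\kappa$, i.e. the points sitting at roughly a $\rho$-fraction of the depth down toward each $\kappa\in W$. So to prove the inclusion it suffices to show that at such a $\rho$-depth point $z=R^{\rho}\kappa$ (with $\kappa\in W$) one has $H(z)\geq\rho$; the first point on the geodesic $[o,\kappa]$ with $H\geq\rho$ is then at or above $z$, hence lies in the desired stopping time, and taking minimal elements matches the two stopping times up.

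The key analytic input is concavity of $H$ along geodesics. By Proposition \ref{extcondenser}(2) the extremal $H$ is harmonic on $\mathcal{T}\setminus(\{o\}\cup W)$, and the mean value property (\ref{mvp}) forces the restriction of $H$ to any geodesic to be a concave function of the distance from the root (at an interior node of the geodesic, $H$ equals the average of its predecessor, the on-geodesic successor, and the off-geodesic successor; dropping the nonnegative off-geodesic term shows the on-geodesic second difference is $\le 0$). Hence for $o<z<w$ with $w\in W$, writing $t = d(z)/d(w)\in[0,1]$, concavity and the boundary values $H(o)=0$ (since $\psi(0)=0$ in (\ref{extremal}), equivalently $Ih(o)=h(o)$ and the extremal has $H(o)=0$) and $H(w)=1$ give
\[
H(z)\;\ge\;(1-t)\,H(o)+t\,H(w)\;=\;t\;=\;\frac{d(z)}{d(w)}.
\]
For $z=R^{\rho}\kappa$ and $w=\kappa\in W$ the defining inequality (\ref{rhoroot}) gives $d(z)\ge\rho\,d(\kappa)=\rho\,d(w)$, so $H(z)\ge\rho$, which is exactly what we needed.

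With the inclusion in hand, monotonicity of $Cap_{\mathcal{T}}$ (a larger set — equivalently larger $\mathcal{T}$-tent region — has larger capacity, immediate from the infimum definition (\ref{captree}), since any admissible $f$ for the bigger set is admissible for the smaller) yields $Cap_{\mathcal{T}}W_{\mathcal{T}}^{\rho}\le Cap_{\mathcal{T}}\widehat{W_{\mathcal{T}}^{\rho}}$, and Lemma \ref{newblowup} bounds the right-hand side by $\rho^{-2}Cap_{\mathcal{T}}W$. I don't expect a serious obstacle here: the only point requiring a little care is justifying concavity along geodesics at the right-hand endpoint — i.e. that $H\le 1$ on all of the geodesic so the interpolation inequality is used in the correct direction — and that $H(o)=0$; both follow from the structure of the extremal in Proposition \ref{extcondenser} (positivity of $h$ on $\mathcal{G}(\{o\},W)$ and vanishing elsewhere, so $H$ is nondecreasing along geodesics toward $W$ and thus lies between $0$ and $1$ there). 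Everything else is bookkeeping about which elements are minimal.
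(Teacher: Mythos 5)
Your proof is correct and takes essentially the same approach as the paper: concavity of $H$ along geodesics plus the boundary values $H(o)=0$, $H|_W=1$ give $H(R^\rho\kappa)\geq d(R^\rho\kappa)/d(\kappa)\geq\rho$, yielding the inclusion, and then monotonicity of capacity together with Lemma \ref{newblowup} finishes. One small slip in your parenthetical: merely discarding the nonnegative off-geodesic value $H(x_-)$ from the mean value identity gives $H(x^{-1})-2H(x)+H(x_+)\leq H(x)$, not $\leq 0$; the correct step is to use $H(x_-)\geq H(x)$ (monotonicity of $H$, which you do invoke later), or equivalently the identity (\ref{addition}) $h(x)=h(x_+)+h(x_-)$ with $h\geq 0$, which shows the increments along a geodesic are non-increasing.
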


\subsection{Holomorphic Approximate Extremals\label{approximate} and Capacity
Estimates}

We now define a holomorphic approximation $\Phi$ to the extremal function
$H=Ih$ on $\mathcal{T}$ constructed in Proposition \ref{extcondenser}. We will
use a parameter $s$. We always suppose $s>-1$ and additional specific
assumptions will be made at various places$.$ Define $\varphi_{\kappa}\left(
z\right)  =\left(  \frac{1-\left\vert \kappa\right\vert ^{2}}{1-\overline
{\kappa}z}\right)  ^{1+s}$ and set%
\begin{equation}
\Phi\left(  z\right)  =\sum_{\kappa\in\mathcal{T}}h\left(  \kappa\right)
\varphi_{\kappa}\left(  z\right)  =\sum_{\kappa\in\mathcal{T}}h\left(
\kappa\right)  \left(  \frac{1-\left\vert \kappa\right\vert ^{2}}%
{1-\overline{\kappa}z}\right)  ^{1+s}.\label{defFw}%
\end{equation}
Note that for $\tau\in\mathcal{T}$%
\[
\sum_{\kappa\in\mathcal{T}}h\left(  \kappa\right)  I\delta_{\kappa}\left(
\mathcal{\tau}\right)  =I\left(  \sum_{\kappa\in\mathcal{T}}h\left(
\kappa\right)  \delta_{\kappa}\right)  \left(  \mathcal{\tau}\right)
=Ih\left(  \mathcal{\tau}\right)  =H\left(  \mathcal{\tau}\right)  ,
\]
and so%
\begin{equation}
\Phi\left(  z\right)  -H\left(  z\right)  =\sum_{\kappa\in\mathcal{T}}h\left(
\kappa\right)  \left\{  \varphi_{\kappa}-I\delta_{\kappa}\right\}  \left(
z\right)  .\label{comparison}%
\end{equation}

Define $\Gamma_{s}$ by
\begin{equation}
\Gamma_{s}h\left(  z\right)  =\int_{\mathbb{D}}h\left(  \zeta\right)
\frac{(1-\left\vert \zeta\right\vert ^{2})^{s}}{\left(  1-\overline{\zeta
}z\right)  ^{1+s}}dA,\label{projop}%
\end{equation}
and recall that for appropriate constant $c_{s},$ $c_{s}\Gamma_{s}$ is a
projection onto holomorphic functions \cite[Thm 2.11]{Zhu}. For notational
convenience we absorb the constant $c_{s}$ into the measure $dA.$ Thus for
$h\in\mathcal{P}(\mathbb{D}),$%
\begin{equation}
\Gamma_{s}h\left(  z\right)  =h\left(  z\right)  .\label{repro}%
\end{equation}
We then have $\Phi=\Gamma_{s}g$ where%
\begin{equation}
g\left(  \zeta\right)  =\sum_{\kappa\in\mathcal{T}}h\left(  \kappa\right)
\frac{1}{\left\vert B_{\kappa}\right\vert }\frac{\left(  1-\overline{\zeta
}\kappa\right)  ^{1+s}}{(1-\left\vert \zeta\right\vert ^{2})^{s}}%
\chi_{B_{\kappa}}\left(  \zeta\right)  ,\label{defgw}%
\end{equation}
and $B_{\kappa}$ is the Euclidean ball centered at $\kappa$ with radius
$c\left(  1-\left\vert \kappa\right\vert \right)  $ where $c$ is a small
positive constant to be chosen later. The function $\Phi$ satisfies the
following estimates.

\begin{proposition}
\label{newBoe}Set $F=\widehat{E_{\mathcal{T}}^{\rho}}$ and write $E=\left\{
w_{k}\right\}  _{k}$. Suppose $z\in\mathbb{D}$ and $s>-1$. Then we have%
\begin{equation}
\left\{
\begin{array}
[c]{llll}%
\left\vert \Phi\left(  z\right)  -\Phi\left(  w_{k}\right)  \right\vert  &
\leq & CCap_{\mathcal{T}}\left(  E,F\right)  , & z\in T\left(  w_{k}\right) \\
\text{ }\operatorname{Re}\Phi\left(  w_{k}\right)  & \geq & c>0, & k\geq1\\
\text{ }\left\vert \Phi\left(  w_{k}\right)  \right\vert  & \leq & C, &
k\geq1\\
\text{ }\left\vert \Phi\left(  z\right)  \right\vert  & \leq &
CCap_{\mathcal{T}}\left(  E,F\right)  , & z\notin F.
\end{array}
\right.  .\label{estimates}%
\end{equation}

\end{proposition}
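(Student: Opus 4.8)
The plan is to run the standard passage from a discrete tree extremal to a holomorphic near‑extremal (a ``B{\o}e‑type'' argument). The function $\Phi$ of (\ref{defFw}) is a holomorphic superposition of the normalized bumps $\varphi_{\kappa}(z)=((1-|\kappa|^{2})/(1-\overline{\kappa}z))^{1+s}$ weighted by the density $h=\Delta H$ of the condenser extremal $H=Ih$ attached to the pair $E$, $F$ from Proposition \ref{extcondenser}, so that $H=1$ on $E=\{w_{k}\}$, $h$ is positive on $\mathcal{G}(E,F)$ and vanishes elsewhere (in particular above $F$); the aim is to show $\Phi$ inherits the good features of $H$. The organizing identity is (\ref{comparison}), whose right side is controlled by two elementary facts about the discrepancy: $I\delta_{\kappa}=\chi_{S(\kappa)}$, and $\varphi_{\kappa}$ is bounded on $\mathbb{D}$ ($|\varphi_{\kappa}|\leq 2^{1+s}$) and comparable to a positive constant on $T(\kappa)$, while if $z$ is not below $\kappa$ then, writing $\tau_{z}$ for the box containing $z$ and $\wedge$ for the tree meet, $|1-\overline{\kappa}z|\gtrsim 2^{-d(\kappa\wedge\tau_{z})}$, so $\varphi_{\kappa}(z)$ decays like $2^{-(1+s)(d(\kappa)-d(\kappa\wedge\tau_{z}))}$, a summable rate precisely because $1+s>0$. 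Each of the four estimates then reduces to summing a positive kernel against $h$, and the decisive point is to use the $\ell^{1}$ bounds of Proposition \ref{extcondenser}(3): applied to a singleton it gives $|h(\kappa)|\leq 2\,Cap_{\mathcal{T}}(E,F)$ for every $\kappa$, and applied to a stopping time $S$ it gives $\sum_{\kappa\in S}|h(\kappa)|\leq 2\,Cap_{\mathcal{T}}(E,F)$, rather than the $\ell^{2}$ identity $\Vert h\Vert_{\ell^{2}}^{2}=Cap_{\mathcal{T}}(E,F)$, which would lose a square root.

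I would then establish the estimates working directly with $\Phi=\sum_{\kappa}h(\kappa)\varphi_{\kappa}$. For $|\Phi(w_{k})|\leq C$: split into the geodesic part $\kappa\leq w_{k}$, where $|\varphi_{\kappa}(w_{k})|\leq C$ and $\sum_{\kappa\leq w_{k}}h(\kappa)=H(w_{k})=1$, and the rest, where the $\kappa$ are incomparable to $w_{k}$ and the decaying kernel summed against $h$ shell‑by‑shell in $j=d(\kappa)-d(\kappa\wedge w_{k})$ (each shell a stopping time) yields $O(Cap_{\mathcal{T}}(E,F))$. For $\operatorname{Re}\Phi(w_{k})\geq c>0$: for $\kappa$ on the geodesic from $F$ to $w_{k}$ one checks $\operatorname{Re}\varphi_{\kappa}(w_{k})\geq c_{0}>0$, so the geodesic part is $\geq c_{0}\sum_{\kappa\leq w_{k}}h(\kappa)=c_{0}$, and since we may assume $Cap_{\mathcal{T}}(E,F)$ small the remainder, being $O(Cap_{\mathcal{T}}(E,F))$, does not destroy the lower bound. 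For $|\Phi(z)-\Phi(w_{k})|\leq C\,Cap_{\mathcal{T}}(E,F)$ with $z\in T(w_{k})$: write the difference as $\sum_{\kappa}h(\kappa)(\varphi_{\kappa}(z)-\varphi_{\kappa}(w_{k}))$; for $\kappa$ above $w_{k}$, the mean value theorem with $|z-w_{k}|\lesssim 2^{-d(w_{k})}$ gives $|\varphi_{\kappa}(z)-\varphi_{\kappa}(w_{k})|\lesssim 2^{-(d(w_{k})-d(\kappa))}$, and since there is exactly one such $\kappa$ per level, the pointwise bound $|h(\kappa)|\leq 2\,Cap_{\mathcal{T}}(E,F)$ makes this telescope to $O(Cap_{\mathcal{T}}(E,F))$, while the remaining $\kappa$ are handled by the shell estimate. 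Finally, for $|\Phi(z)|\leq C\,Cap_{\mathcal{T}}(E,F)$ when $z\notin F$: Lemma \ref{contain} gives $E_{\mathcal{T}}^{\rho}\subset\widehat{E_{\mathcal{T}}^{\rho}}=F$, hence the disk blowup $G_{\mathbb{D}}^{\rho}$ of $G=\bigcup I(w_{k})$ is contained in $F$, so $z\notin G_{\mathbb{D}}^{\rho}$; Lemma \ref{geoseparation} then yields $|1-\overline{\kappa}z|\gtrsim(1-|\kappa|^{2})^{\rho}$ for $\kappa\in T(G)=G_{\mathbb{D}}^{1}$, whence $|\varphi_{\kappa}(z)|\lesssim(1-|\kappa|^{2})^{(1-\rho)(1+s)}$ there, while the part of $\mathcal{G}(E,F)$ not in $T(G)$ lies strictly below $z$ in the tree and is controlled by the radial decay $|\varphi_{\kappa}(z)|\lesssim 2^{-(1+s)(d(\kappa)-d(z))}$; a Schur‑type summation of either piece against $|h|$ produces $O(Cap_{\mathcal{T}}(E,F))$.

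The hard part will be obtaining the correct power of the capacity in the two ``$\leq C\,Cap_{\mathcal{T}}(E,F)$'' bounds: a term‑by‑term estimate using $|h(\kappa)|\leq\Vert h\Vert_{\ell^{2}}=\sqrt{Cap_{\mathcal{T}}(E,F)}$ is off by a square root, which is fatal in the regime of small capacity that governs the whole argument. The remedy is to split every sum into a single geodesic contribution, bounded with the pointwise bound $|h(\kappa)|\leq 2\,Cap_{\mathcal{T}}(E,F)$ (plus, for the difference estimate, the extra Lipschitz factor $2^{-(d(w_{k})-d(\kappa))}$), and an off‑geodesic contribution organized into shells, each shell being a stopping time contributing $\lesssim Cap_{\mathcal{T}}(E,F)$ times a summable geometric weight --- the discrete version of the Schur test referred to in the text --- and making this bookkeeping uniform in $z$ and in the geometry of $E$ is the technical core. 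A secondary delicate point is the lower bound $\operatorname{Re}\Phi(w_{k})\geq c>0$: it requires $\operatorname{Re}\varphi_{\kappa}(w_{k})$ to stay bounded below along the connecting geodesic, and since $\arg(1-\overline{\kappa}w_{k})$ is controlled only by a fixed angle coming from the angular drift of Bergman‑tree geodesics, this imposes an upper restriction on the parameter $s$ and constrains the choice of the constant defining the balls $B_{\kappa}$.
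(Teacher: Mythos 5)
Your proposal follows essentially the same route as the paper: the comparison $\Phi-H$ via (\ref{comparison}), a geodesic/off-geodesic split, a stopping-time shelling of the off-geodesic sum controlled by the $\ell^1$ bound of Proposition \ref{extcondenser}(3) together with geometric decay of $\varphi_\kappa(z)$, the Lipschitz bound on $\varphi_\kappa(z)-\varphi_\kappa(w_j)$ along $[o,w_j]$ with the pointwise bound $h(\kappa)\leq C\,Cap_{\mathcal{T}}(E,F)$, and the lower bound on $\operatorname{Re}\varphi_\kappa(w_j)$ along the geodesic, with the attendant requirement that the Bergman tree be constructed sufficiently thin depending on $s$. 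The only cosmetic difference is that the paper shells the off-geodesic sum by the level sets $\Omega_j$ of the ratio $|(1-|\kappa|^2)/(1-\overline{\kappa}z)|$ and proves in a sublemma that each $\Omega_j$ is a union of two stopping times, whereas you shell by tree distance $j=d(\kappa)-d(\kappa\wedge\tau_z)$; both schemes feed the same $\ell^1$ estimate and give the same bound.
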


\begin{corollary}
\label{smallcap}Furthermore, if $s>-\frac{1}{2}$ then $\Phi=\Gamma_{s}g$ for a
$g$ which satisfies
\begin{equation}
\int_{\mathbb{D}}\left\vert g\left(  \zeta\right)  \right\vert ^{2}dA\leq
C\ Cap_{\mathcal{T}}\left(  E,F\right)  ;\label{maincon}%
\end{equation}
and if $s>\frac{1}{2}$ then
\begin{equation}
\left\Vert \Phi\right\Vert _{\mathcal{D}}^{2}\leq\int_{\mathbb{D}}\left\vert
g\left(  \zeta\right)  \right\vert ^{2}dA\leq C\ Cap_{\mathcal{T}}\left(
E,F\right)  .\label{Phicapacity}%
\end{equation}

\begin{proof}
From (\ref{comparison}) we have%
\begin{align*}
\left\vert \Phi\left(  z\right)  -H\left(  z\right)  \right\vert  & \leq
\sum_{\kappa\in\left[  o,z\right]  }\left\vert h\left(  \kappa\right)
\left\{  \varphi_{\kappa}\left(  z\right)  -1\right\}  \right\vert
+\sum_{\kappa\notin\left[  o,z\right]  }\left\vert h\left(  \kappa\right)
\varphi_{\kappa}\left(  z\right)  \right\vert \\
& =I\left(  z\right)  +II\left(  z\right)  .
\end{align*}
We also have that $h$ is nonnegative and supported in $V_{G}^{\gamma}\setminus
V_{G}^{\alpha}$. We first show that%
\[
II\left(  z\right)  \leq\sum_{\kappa\notin\left[  o,z\right]  }h\left(
\kappa\right)  \left\vert \frac{1-\left\vert \kappa\right\vert ^{2}%
}{1-\overline{\kappa}z}\right\vert ^{1+s}\leq CCap\left(  E,F\right)  .
\]
For $A>1$ let%
\[
\Omega_{j}=\left\{  \kappa\in\mathcal{T}:A^{-j-1}<\left\vert \frac
{1-\left\vert \kappa\right\vert ^{2}}{1-\overline{\kappa}z}\right\vert \leq
A^{-j}\right\}  .
\]

\begin{lemma}
For every $j$ the set $\Omega_{j}$ is a union of two stopping times for
$\mathcal{T}$.

\begin{proof}
Let $\Omega_{j}^{\text{1}}\ $be the subset of $\Omega_{j}$ of points whose
distance from the root is odd and set $\Omega_{j}^{\text{2}}=\Omega
_{j}\setminus\Omega_{j}^{\text{1}}.$ We will show both are stopping times;
i.e. if for $r=1,2,$ $\kappa\in\Omega_{j}^{r}$, $\lambda\in\mathcal{T}$, and
$\kappa\in\lbrack o,\lambda)$, then $\lambda\notin\Omega_{j}^{r}. $

Set $\delta\kappa=\lambda-\kappa$. We have
\begin{align}
\left\vert \frac{1-\bar{\lambda}z}{1-\left\vert \lambda\right\vert ^{2}%
}\right\vert  & =\frac{1-\left\vert \kappa\right\vert ^{2}}{1-\left\vert
\lambda\right\vert ^{2}}\left\vert \frac{1-\left(  \overline{\kappa
+\delta\kappa}\right)  z}{1-\left\vert \kappa\right\vert ^{2}}\right\vert
\nonumber\\
& =\frac{1-\left\vert \kappa\right\vert ^{2}}{1-\left\vert \lambda\right\vert
^{2}}\left\vert \frac{1-\bar{\kappa}z}{1-\left\vert \kappa\right\vert ^{2}%
}-\frac{\overline{\delta\kappa}z}{1-\left\vert \kappa\right\vert ^{2}%
}\right\vert \nonumber\\
& \geq\frac{1-\left\vert \kappa\right\vert ^{2}}{1-\left\vert \lambda
\right\vert ^{2}}\left\{  \left\vert \frac{1-\bar{\kappa}z}{1-\left\vert
\kappa\right\vert ^{2}}\right\vert -\frac{\left\vert \overline{\delta\kappa
}z\right\vert }{1-\left\vert \kappa\right\vert ^{2}}\right\} \label{*}%
\end{align}
By the construction of the tree $(1-\left\vert \kappa\right\vert ^{2}%
)\sim2^{s}(1-\left\vert \lambda\right\vert ^{2})$ for some positive integer
$s,$ and if $\kappa$ and $\lambda$ are in the same $\Omega_{j}^{r}$ then
$s\geq2.$ Also, by the construction of $\mathcal{T},$ we have
\[
\frac{\left\vert \overline{\delta\kappa}z\right\vert }{1-\left\vert
\kappa\right\vert ^{2}}\leq\frac{\sqrt{2}\left(  1-\left\vert \kappa
\right\vert \right)  \left\vert z\right\vert }{1-\left\vert \kappa\right\vert
^{2}}\lesssim\frac{\sqrt{2}}{2},
\]
and hence we continue with
\[
\left\vert \frac{1-\bar{\lambda}z}{1-\left\vert \lambda\right\vert ^{2}%
}\right\vert \geq4\left(  A^{j}-\frac{\sqrt{2}}{2}\right)  .
\]
We are done if for each $j,$ $A^{j+1}\leq4\left(  A^{j}-\sqrt{2}/2\right)  $.
That holds if $A\leq4(1-\sqrt{2}/2)<1.\,17.$
\end{proof}
\end{lemma}

Now by the stopping time property, item 3 in Proposition \ref{extcondenser},
we have%
\[
\sum_{\kappa\in\Omega_{j}}h\left(  \kappa\right)  \leq CCap_{\mathcal{T}%
}\left(  E,F\right)  ,\ j\geq0.
\]
Altogether we then have%
\[
II\left(  z\right)  \leq\sum_{j=0}^{\infty}\sum_{\kappa\in\Omega_{j}}h\left(
\kappa\right)  A^{-j\left(  1+s\right)  }\leq C_{s}Cap_{\mathcal{T}}\left(
E,F\right)  .
\]

If $z\in\mathbb{D}\setminus F$ then $I\left(  z\right)  =0$ and $H\left(
z\right)  =0$ and we have
\[
\left\vert \Phi\left(  z\right)  \right\vert =\left\vert \Phi\left(  z\right)
-H\left(  z\right)  \right\vert \leq II\left(  z\right)  \leq C_{s}%
Cap_{\mathcal{T}}\left(  E,F\right)  ,
\]
which is the fourth line in (\ref{estimates}).

If $z\in T\left(  w_{j}\right)  $, then for $\kappa\notin\left[
o,w_{j}\right]  $ we have
\[
\left\vert \varphi_{\kappa}\left(  w_{j}\right)  \right\vert \leq C\left\vert
\varphi_{\kappa}\left(  z\right)  \right\vert ,
\]
and for $\kappa\in\left[  o,z\right]  $ we have%
\[
\left\vert \varphi_{\kappa}\left(  z\right)  -\varphi_{\kappa}\left(
w_{j}\right)  \right\vert =\left\vert \left(  \frac{1-\left\vert
\kappa\right\vert ^{2}}{1-\overline{\kappa}z}\right)  ^{1+s}-\left(
\frac{1-\left\vert \kappa\right\vert ^{2}}{1-\overline{\kappa}w_{j}}\right)
^{1+s}\right\vert \leq C_{s}\frac{\left\vert z-w_{j}\right\vert }{1-\left\vert
\kappa\right\vert ^{2}}.
\]
Thus for $z\in T\left(  w_{j}^{\alpha}\right)  $,%
\begin{align*}
\left\vert \Phi\left(  z\right)  -\Phi\left(  w_{j}\right)  \right\vert  &
\leq\sum_{\kappa\in\left[  o,w_{j}^{\alpha}\right]  }h\left(  \kappa\right)
\left\vert \varphi_{\kappa}\left(  z\right)  -\varphi_{\kappa}\left(
w_{j}\right)  \right\vert +C\sum_{\kappa\notin\left[  o,z\right]  }h\left(
\kappa\right)  \left\vert \varphi_{\kappa}\left(  z\right)  \right\vert \\
& \leq C_{s}\sum_{\kappa\in\left[  o,w_{j}^{\alpha}\right]  }h\left(
\kappa\right)  \frac{\left\vert z-w_{j}\right\vert }{1-\left\vert
\kappa\right\vert ^{2}}+CII\left(  z\right) \\
& \leq C_{s}Cap_{\mathcal{T}}\left(  E,F\right)  ,
\end{align*}
since $h\left(  \kappa\right)  \leq C\ Cap_{\mathcal{T}}\left(  E,F\right)  $
and $\sum_{\kappa\in\left[  o,w_{j}\right]  }\frac{1}{1-\left\vert
\kappa\right\vert ^{2}}\approx\frac{1}{1-\left\vert w_{j}\right\vert ^{2}}$.
This proves the first line in (\ref{estimates}).

Moreover, we note that for $s=0$ and $\kappa\in\left[  o,w_{j}\right]  $,%
\[
\operatorname{Re}\varphi_{\kappa}\left(  w_{j}\right)  =\operatorname{Re}%
\frac{1-\left\vert \kappa\right\vert ^{2}}{1-\overline{\kappa}w_{j}%
}=\operatorname{Re}\frac{1-\left\vert \kappa\right\vert ^{2}}{\left\vert
1-\overline{\kappa}w_{j}\right\vert ^{2}}\left(  1-\kappa\overline{w_{j}%
}\right)  \geq c>0.
\]
A similar result holds for $s>-1$ provided the Bergman tree $\mathcal{T}$ is
constructed sufficiently thin depending on $s$. It then follows from
$\sum_{\kappa\in\left[  o,w_{j}\right]  }h\left(  \kappa\right)  =1$ that
\begin{align*}
\operatorname{Re}\Phi\left(  w_{j}\right)   & =\sum_{\kappa\in\left[
o,w_{j}\right]  }h\left(  \kappa\right)  \operatorname{Re}\varphi_{\kappa
}\left(  w_{j}\right)  +\sum_{\kappa\notin\left[  o,w_{j}\right]  }h\left(
\kappa\right)  \operatorname{Re}\varphi_{\kappa}\left(  w_{j}\right) \\
& \geq c\sum_{\kappa\in\left[  o,w_{j}\right]  }h\left(  \kappa\right)
-C\ Cap_{\mathcal{T}}\left(  E,F\right)  \geq c^{\prime}>0.
\end{align*}
We trivially have%
\[
\left\vert \Phi\left(  w_{j}\right)  \right\vert \leq I\left(  z\right)
+II\left(  z\right)  \leq C\sum_{\kappa\in\left[  o,w_{j}\right]  }h\left(
\kappa\right)  +C\ Cap_{\mathcal{T}}\left(  E,F\right)  \leq C,
\]
and this completes the proof of (\ref{estimates}).

Now we prove (\ref{maincon}). From property 1 of Proposition
\ref{extcondenser} we obtain%
\begin{align*}
\int_{\mathbb{D}}\left\vert g\left(  \zeta\right)  \right\vert ^{2}dA  &
=\int_{\mathbb{D}}\left\vert \sum_{\kappa\in\mathcal{T}}h\left(
\kappa\right)  \frac{1}{\left\vert B_{\kappa}\right\vert }\frac{\left(
1-\overline{\zeta}\kappa\right)  ^{1+s}}{(1-\left\vert \zeta\right\vert
^{2})^{s}}\chi_{B_{\kappa}}\left(  \zeta\right)  \right\vert ^{2}dA\\
& =\sum_{\kappa\in\mathcal{T}}\left\vert h\left(  \kappa\right)  \right\vert
^{2}\frac{1}{\left\vert B_{\kappa}\right\vert ^{2}}\int_{B_{\kappa}}%
\frac{\left\vert 1-\overline{\zeta}\kappa\right\vert ^{2+2s}}{(1-\left\vert
\zeta\right\vert ^{2})^{2s}}dA\\
& \approx\sum_{\kappa\in\mathcal{T}}\left\vert h\left(  \kappa\right)
\right\vert ^{2}\approx Cap_{\mathcal{T}}\left(  E,F\right)  .
\end{align*}
Finally (\ref{Phicapacity}) follows from (\ref{maincon}) and Lemma 2.4 of
\cite{Boe}.
\end{proof}
\end{corollary}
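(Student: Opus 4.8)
\emph{The plan} is to read off both inequalities from the explicit description of $g$ in (\ref{defgw}) together with part (1) of Proposition \ref{extcondenser}; the only genuinely analytic inputs are an elementary size estimate for the atoms out of which $g$ is built and a boundedness property of the operator $\Gamma_{s}$. First I would settle the identity $\Phi=\Gamma_{s}g$. Writing $g=\sum_{\kappa}h(\kappa)\,a_{\kappa}$ with $a_{\kappa}(\zeta)=|B_{\kappa}|^{-1}(1-\overline{\zeta}\kappa)^{1+s}(1-|\zeta|^{2})^{-s}\chi_{B_{\kappa}}(\zeta)$, one gets $\Gamma_{s}a_{\kappa}(z)=|B_{\kappa}|^{-1}\int_{B_{\kappa}}(1-\overline{\zeta}\kappa)^{1+s}(1-\overline{\zeta}z)^{-(1+s)}\,dA(\zeta)$; the integrand is conjugate-holomorphic in $\zeta$ throughout the Euclidean ball $B_{\kappa}$ (its denominator cannot vanish there, since $|1-\overline{\zeta}z|\ge 1-|z|>0$), so its mean over $B_{\kappa}$ equals its value at the center $\zeta=\kappa$, namely $\varphi_{\kappa}(z)$; summing over $\kappa$ gives $\Gamma_{s}g=\Phi$, the identity already recorded at (\ref{defgw}). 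The hypothesis $s>-\tfrac12$ is precisely the range in which the kernel in (\ref{projop}) is square-integrable in $\zeta$, so that $\Gamma_{s}$ acts on $L^{2}(dA)$ and the statement is meaningful.

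\emph{For the $L^{2}$ bound (\ref{maincon})} I would fix the radius constant $c$ defining $B_{\kappa}$ small enough that $1-|\zeta|^{2}\approx 1-|\kappa|^{2}$ and $|1-\overline{\zeta}\kappa|\approx 1-|\kappa|^{2}$ hold uniformly on $B_{\kappa}$, and that the balls $\{B_{\kappa}\}_{\kappa\in\mathcal{T}}$ are pairwise disjoint (bounded overlap would do just as well). Expanding the square, the cross terms vanish, and
\[
\int_{\mathbb{D}}|g|^{2}\,dA=\sum_{\kappa}|h(\kappa)|^{2}\,\frac{1}{|B_{\kappa}|^{2}}\int_{B_{\kappa}}\frac{|1-\overline{\zeta}\kappa|^{2+2s}}{(1-|\zeta|^{2})^{2s}}\,dA\approx\sum_{\kappa}|h(\kappa)|^{2},
\]
because there the integrand is $\approx(1-|\kappa|^{2})^{2}$ while $|B_{\kappa}|\approx(1-|\kappa|^{2})^{2}$. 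By part (1) of Proposition \ref{extcondenser} the last sum equals $\|h\|_{\ell^{2}}^{2}=Cap_{\mathcal{T}}(E,F)$, which is (\ref{maincon}).

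\emph{For the Dirichlet bound (\ref{Phicapacity})}, which needs $s>\tfrac12$, I would invoke the boundedness of $\Gamma_{s}$ from $L^{2}(dA)$ into $\mathcal{D}$: with $c_{s}$ absorbed into $dA$ as in the paper, Lemma 2.4 of \cite{Boe} gives exactly $\|\Gamma_{s}g\|_{\mathcal{D}}\le\|g\|_{L^{2}(dA)}$, whence $\|\Phi\|_{\mathcal{D}}^{2}=\|\Gamma_{s}g\|_{\mathcal{D}}^{2}\le\int_{\mathbb{D}}|g|^{2}\,dA\le C\,Cap_{\mathcal{T}}(E,F)$. This is the one imported ingredient, and I expect it to be the only delicate point in the corollary: a hands-on estimate of $(\Gamma_{s}g)'$ via Forelli--Rudin integrals would reprove $\|\Gamma_{s}g\|_{\mathcal{D}}\lesssim\|g\|_{L^{2}(dA)}$, with $s>\tfrac12$ surfacing in the exponent arithmetic, but not with the sharp constant $1$, which is used downstream, so quoting \cite{Boe} is the clean route. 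Everything else reduces to the elementary atom bookkeeping above together with Proposition \ref{extcondenser}.
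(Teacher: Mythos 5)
Your proof is correct and follows essentially the same route as the paper: disjointness of the balls $B_{\kappa}$ kills the cross terms, the pointwise size estimates on $B_{\kappa}$ reduce $\int|g|^{2}\,dA$ to $\|h\|_{\ell^{2}}^{2}=Cap_{\mathcal{T}}(E,F)$ via part (1) of Proposition \ref{extcondenser}, and (\ref{Phicapacity}) is then obtained by quoting Lemma 2.4 of \cite{Boe}. The only additions you make beyond what the paper records explicitly are the mean-value verification of $\Phi=\Gamma_{s}g$ and the observation that $s>-\tfrac12$ makes the kernel of $\Gamma_{s}$ square-integrable; both are harmless expansions, not a different argument.
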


\begin{corollary}
\label{capequiv}Let $G$ be a finite union of arcs in the circle $\mathbb{T}$.
Then%
\begin{equation}
Cap_{\mathbb{D}}\left(  G\right)  \approx Cap_{\mathcal{T}}\left(  G\right)
,\label{claimSteg}%
\end{equation}
where $Cap_{\mathbb{D}}$ denotes Stegenga's capacity on the circle
$\mathbb{T}$.

\begin{proof}
To prove the inequality $\lessapprox$ in (\ref{claimSteg}) we use Proposition
\ref{newBoe} to obtain a test function for estimating the Stegenga capacity of
$G$. We take $F=\left\{  o\right\}  $ and $E=G$ in Proposition \ref{newBoe}.
Let $c,C$ be the constants in Proposition \ref{newBoe}, and suppose that
$Cap\left(  E,F\right)  \leq c/(3C)$. Set $\Psi\left(  z\right)  =\frac{3}%
{c}\left(  \Phi\left(  z\right)  -\Phi\left(  0\right)  \right)  $. Then
$\Psi\left(  0\right)  =0$ and
\begin{align*}
\operatorname{Re}\Psi\left(  z\right)   & =\frac{3}{c}\left\{
\operatorname{Re}\Phi\left(  z\right)  -\operatorname{Re}\Phi\left(  0\right)
\right\} \\
& \geq\frac{3}{c}\left\{  c-2CCap_{\mathcal{T}}\left(  E,F\right)  \right\}
\geq1,\ z\in G.
\end{align*}
By definition (\ref{extremal}) and (\ref{Phicapacity}) we have that for
$G\subset\mathbb{T}$%
\begin{align*}
Cap_{\mathbb{D}}(G)  & \leq\left\Vert \Psi\right\Vert _{\mathcal{D}}%
^{2}=\left(  \frac{3}{c}\right)  ^{2}\left\Vert \Phi\right\Vert _{\mathcal{D}%
}^{2}\\
& \leq\left(  \frac{3}{c}\right)  ^{2}C\ Cap_{\mathcal{T}}\left(  E,F\right)
\leq C\ Cap_{\mathcal{T}}E\\
& =C\ Cap_{\mathcal{T}}G.
\end{align*}

To obtain the opposite inequality we use $\psi\in\mathcal{D},$ an extremal
function for computing $Cap_{\mathbb{D}}G$. For $R>0,$ $z\in\mathbb{D}$ let
$B(z,R)$ be the hyperbolic disk of radius $R$ centered at $z$. Pick $R$ large
enough so that for all $\kappa\in\mathcal{T}\setminus\left\{  o\right\}  $ we
have $B(\kappa,R)\supset\operatorname{convexhull}\left(  B_{\kappa}\cup
B_{\kappa^{-1}}\right)  .$ Our candidate for estimating $Cap_{\mathcal{T}}$ is
given by setting $h\left(  o\right)  =0$ and
\[
h\left(  \kappa\right)  =(1-\left\vert \kappa\right\vert ^{2})\sup\left\{
\left\vert \psi^{\prime}(z)\right\vert :z\in B(\kappa,R)\right\}  ;\text{
}\kappa\in\mathcal{T}\setminus\left\{  o\right\}  .
\]
We have the pointwise estimate
\begin{align*}
\operatorname{Re}\psi\left(  \beta\right)   & \leq\left\vert \psi\left(
\beta\right)  \right\vert \leq\sum_{\kappa\in\left[  o,\beta\right]
}\left\vert \psi\left(  \kappa\right)  -\psi\left(  \kappa^{-1}\right)
\right\vert \\
& \leq\sum_{\kappa\in\left[  o,\beta\right]  }\left\vert \kappa-\kappa
^{-1}\right\vert \sup\left\{  \left\vert \psi^{\prime}(z)\right\vert
:z\in\operatorname{segment}\left(  \kappa,\kappa^{-1}\right)  \right\} \\
& \leq C\sum_{\kappa\in\left[  o,\beta\right]  }h(\kappa)=CIh(\beta).
\end{align*}
We have the norm estimate, with $z\left(  \kappa\right)  $ denoting the
appropriate point in $B(\kappa,R),$%
\begin{align*}
\left\Vert h\right\Vert _{\ell^{2}\left(  \mathcal{T}\right)  }^{2}  &
=\sum_{\kappa\in\mathcal{T}}\left(  1-\left\vert \kappa^{2}\right\vert
\right)  ^{2}\left\vert \psi^{\prime}\left(  z\left(  \kappa\right)  \right)
\right\vert ^{2}\\
& \leq C\sum_{\kappa\in\mathcal{T}}\frac{\left(  1-\left\vert \kappa
^{2}\right\vert \right)  ^{2}}{\left\vert B(\kappa,R)\right\vert }%
\int_{B(\kappa,R)}\left\vert \psi^{\prime}\left(  z\right)  \right\vert
^{2}dA\\
& \leq C\sum_{\kappa\in\mathcal{T}}\int_{B(\kappa,R)}\left\vert \psi^{\prime
}\left(  z\right)  \right\vert ^{2}dA\\
& \leq C\int_{\mathbb{D}}\left\vert \psi^{\prime}\left(  z\right)  \right\vert
^{2}dA\leq C\left\Vert \psi\right\Vert _{\mathcal{D}}^{2}.
\end{align*}
Here the first inequality uses he submean value property for the subharmonic
function $\left\vert \psi^{\prime}\left(  z\right)  \right\vert ^{2},$ the
second uses straightforward estimates for $\left\vert B(\kappa,R)\right\vert
,$ and the next estimate holds because the $B(\kappa,R)$ are approximately
disjoint; $\sum\chi_{B(\kappa,R)}(z)\leq C.$ Recalling definition
(\ref{captree}) we find
\[
Cap_{\mathcal{T}}G\leq C\left\Vert \frac{1}{c}\psi\right\Vert _{\mathcal{D}%
}^{2}=\frac{C}{c^{2}}Cap_{\mathbb{D}}G.
\]

\end{proof}
\end{corollary}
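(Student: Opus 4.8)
The plan is to establish the two inequalities separately, in each direction by transporting an (approximate) extremal function between the two models; throughout I work under the paper's standing assumption that $Cap_{\mathbb{D}}(G)$, equivalently $Cap_{\mathcal{T}}(G)$, is small.

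For the estimate $Cap_{\mathbb{D}}(G)\lesssim Cap_{\mathcal{T}}(G)$, I would feed the tree extremal $h$ for $Cap_{\mathcal{T}}(G)$ into the holomorphic construction of Subsection \ref{approximate}, applying Proposition \ref{newBoe} and Corollary \ref{smallcap} with $E=G$ and $F=\{o\}$ and with a parameter $s>\tfrac12$, so that $Cap_{\mathcal{T}}(E,F)\le C\,Cap_{\mathcal{T}}(G)$. Then Corollary \ref{smallcap} (via Lemma 2.4 of \cite{Boe}) yields $\Phi=\Gamma_s g\in\mathcal{D}$ with $\|\Phi\|_{\mathcal{D}}^2\le C\,Cap_{\mathcal{T}}(E,F)$, while the estimates (\ref{estimates}) of Proposition \ref{newBoe} give $\operatorname{Re}\Phi\ge c>0$ at the stopping-time points $w_k$ defining $G$, and, through the tent bound in the first line of (\ref{estimates}), $\operatorname{Re}\Phi(z)\ge c-C\,Cap_{\mathcal{T}}(E,F)$ for $z$ in the tents $T(w_k)$ whose union covers $G$, with $\operatorname{Re}\Phi(0)$ likewise under control. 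Since $Cap_{\mathcal{T}}(G)$ is small, the normalized function $\Psi=\frac{3}{c}\bigl(\Phi-\Phi(0)\bigr)$ satisfies $\Psi(0)=0$ and $\operatorname{Re}\Psi\ge1$ on $G$, hence is admissible in (\ref{extremal}), and $Cap_{\mathbb{D}}(G)\le\|\Psi\|_{\mathcal{D}}^2\lesssim Cap_{\mathcal{T}}(G)$.

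For the reverse inequality $Cap_{\mathcal{T}}(G)\lesssim Cap_{\mathbb{D}}(G)$, I would discretize a near-extremal $\psi\in\mathcal{D}$ for Stegenga's capacity (so $\psi(0)=0$, $\operatorname{Re}\psi\ge1$ on $G$, and $\|\psi\|_{\mathcal{D}}^2$ within $\varepsilon$ of $Cap_{\mathbb{D}}(G)$). Fixing a hyperbolic radius $R$ large enough that $B(\kappa,R)$ contains the convex hull of $B_\kappa\cup B_{\kappa^{-1}}$ for every $\kappa\ne o$, set $h(o)=0$ and $h(\kappa)=(1-|\kappa|^2)\sup\{|\psi'(z)|:z\in B(\kappa,R)\}$. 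Telescoping $\psi$ along a geodesic $[o,\beta]$ and bounding each increment by arclength times $\sup|\psi'|$ gives $\operatorname{Re}\psi(\beta)\le|\psi(\beta)|\le C\,Ih(\beta)$, so a fixed multiple of $h$ is a competitor for $Cap_{\mathcal{T}}(G)$; and the sub-mean-value property of the subharmonic function $|\psi'|^2$, combined with the bounded overlap $\sum_\kappa\chi_{B(\kappa,R)}\le C$ of the enlarged balls, turns $\|h\|_{\ell^2(\mathcal{T})}^2=\sum_\kappa(1-|\kappa|^2)^2\sup_{B(\kappa,R)}|\psi'|^2$ into at most $C\int_{\mathbb{D}}|\psi'|^2\,dA\le C\|\psi\|_{\mathcal{D}}^2$. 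This gives $Cap_{\mathcal{T}}(G)\lesssim Cap_{\mathbb{D}}(G)$.

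I expect the first direction to be the main obstacle. The second is essentially a routine discretization once $R$ is chosen so as to restore the bounded-overlap property of the enlarged Bergman balls. The delicate point in the first direction is that admissibility for $Cap_{\mathbb{D}}(G)$ forces $\operatorname{Re}\Phi$ to be bounded below all the way out to $G\subset\mathbb{T}$, and not merely at interior tree points, while at the same time $\|\Phi\|_{\mathcal{D}}^2$ must remain comparable to $Cap_{\mathcal{T}}(G)$; this is precisely the content of Proposition \ref{newBoe} together with Corollary \ref{smallcap}, and it is there that the geometric separation furnished by the capacitary blowup, the regularity threshold $s>\tfrac12$, and the smallness normalization (used to absorb the error $C\,Cap_{\mathcal{T}}(G)$ into the constant $c$) all come into play.
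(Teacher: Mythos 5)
Your argument coincides with the paper's proof in both directions: in the first direction you build $\Psi=\tfrac{3}{c}\bigl(\Phi-\Phi(0)\bigr)$ from the holomorphic approximate extremal of Proposition~\ref{newBoe} and Corollary~\ref{smallcap} with $E=G$, $F=\{o\}$, and in the second you discretize the Stegenga extremal $\psi$ by $h(\kappa)=(1-|\kappa|^2)\sup_{B(\kappa,R)}|\psi'|$, controlling $Ih$ by telescoping and $\|h\|_{\ell^2}^2$ via the sub-mean-value inequality and bounded overlap. This is the same construction, the same two test functions, and the same estimates as in the paper.
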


Abbreviate $Cap_{\mathcal{T}_{\theta}}$ by $Cap_{\theta}$, and let $T_{\theta
}\left(  E\right)  $ be the $\mathcal{T}_{\theta}$-tent region corresponding
to an open subset $E$ of the circle $\mathbb{T}$. Recall that $T\left(
E\right)  =%
{\textstyle\bigcup\limits_{I\subset E}}
T\left(  I\right)  $. Now define $M$ by%
\begin{equation}
M:=\sup_{E\text{ open }\subset\mathbb{T}}\frac{\int_{\mathbb{T}}\mu_{b}\left(
T_{\theta}\left(  E\right)  \right)  d\theta}{\int_{\mathbb{T}}Cap_{\theta
}\left(  E\right)  d\theta}.\label{Gmax}%
\end{equation}

\begin{corollary}
\label{M}We have $\left\Vert \mu_{b}\right\Vert _{\mathcal{D}-Carleson}%
^{2}\approx M.$

\begin{proof}
Using Corollary \ref{capequiv} and $T_{\theta}\left(  E\right)  \subset
T\left(  E\right)  $, we have%
\begin{align*}
M  & \leq C\sup_{E\text{ open }\subset\mathbb{T}}\frac{\int_{\mathbb{T}}%
\mu_{b}\left(  T\left(  E\right)  \right)  d\theta}{\int_{\mathbb{T}%
}Cap_{\mathbb{D}}\left(  E\right)  d\theta}\\
& =C\sup_{E\text{ open }\subset\mathbb{T}}\frac{\mu_{b}\left(  T\left(
E\right)  \right)  }{Cap_{\mathbb{D}}\left(  E\right)  }\approx\left\Vert
\mu_{b}\right\Vert _{\mathcal{D}-Carleson}^{2},
\end{align*}
where the final comparison is Stegenga's theorem. Conversely, one can verify
using an argument in the style of the one in (\ref{muest}) below that for
$0<\rho<1$,%
\begin{align*}
\mu_{b}\left(  E\right)   & \leq C\int_{\mathbb{T}}\mu_{b}\left(  T_{\theta
}\left(  E_{\mathbb{D}}^{\rho}\right)  \right)  d\theta\\
& \leq CM\int_{\mathbb{T}}Cap_{\theta}\left(  E_{\mathbb{D}}^{\rho}\right)
d\theta\\
& \approx CMCap_{\mathbb{D}}\left(  E_{\mathbb{D}}^{\rho}\right) \\
& \leq CMCap_{\mathbb{D}}\left(  E\right)  .
\end{align*}
Here the third line uses (\ref{claimSteg}) with $E_{\mathbb{D}}^{\rho}$ and
$\mathcal{T}\left(  \theta\right)  $ in place of $G$ and $\mathcal{T}$, and
the final inequality follows from\ (\ref{B}). Thus from Stegenga's theorem we
obtain%
\[
\left\Vert \mu_{b}\right\Vert _{\mathcal{D}-Carleson}^{2}\approx\sup_{E\text{
open }\subset\mathbb{T}}\frac{\mu_{b}\left(  E\right)  }{Cap_{\mathbb{D}%
}\left(  E\right)  }\leq CM.
\]

\end{proof}
\end{corollary}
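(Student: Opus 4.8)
\emph{Reduction to Stegenga's theorem.} The proof will rest on Stegenga's capacitary characterization, which by (\ref{Steg}) amounts to the statement that
\[
\Vert\mu_b\Vert_{\mathcal{D}-Carleson}^{2}\approx\sup\Bigl\{\tfrac{\mu_b\left(T\left(E\right)\right)}{Cap_{\mathbb{D}}\left(E\right)}:E\subset\mathbb{T}\text{ a finite union of arcs}\Bigr\},
\]
so it suffices to compare the right-hand supremum with $M$. Two facts will be used repeatedly: first, $T_{\theta}\left(E\right)\subset T\left(E\right)$, since every $\mathcal{T}_{\theta}$-dyadic arc contained in $E$ is in particular an arc contained in $E$; and second, $Cap_{\theta}\left(E\right)\approx Cap_{\mathbb{D}}\left(E\right)$ \emph{uniformly in} $\theta$, because the constants in Corollary~\ref{capequiv} (and in $Cap_{\mathbb{D}}$ itself) are rotation invariant. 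I treat the two inequalities of the corollary separately.

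\emph{The inequality $M\lesssim\Vert\mu_b\Vert_{\mathcal{D}-Carleson}^{2}$.} Fix an open $E\subset\mathbb{T}$. Since $T_{\theta}\left(E\right)\subset T\left(E\right)$ we have $\mu_b\left(T_{\theta}\left(E\right)\right)\le\mu_b\left(T\left(E\right)\right)$ for every $\theta$, while $Cap_{\theta}\left(E\right)\gtrsim Cap_{\mathbb{D}}\left(E\right)$. Integrating in $\theta$ and dividing, the ratio appearing in (\ref{Gmax}) for this particular $E$ is $\lesssim\mu_b\left(T\left(E\right)\right)/Cap_{\mathbb{D}}\left(E\right)$; taking the supremum over $E$ and invoking the displayed form of Stegenga's theorem gives $M\lesssim\Vert\mu_b\Vert_{\mathcal{D}-Carleson}^{2}$.

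\emph{The inequality $\Vert\mu_b\Vert_{\mathcal{D}-Carleson}^{2}\lesssim M$.} Fix any $\rho\in\left(0,1\right)$ and a finite union of arcs $E$; by the remark preceding Lemma~\ref{newcondenser} we may assume $Cap_{\mathbb{D}}\left(E\right)$ is small, which forces the arcs making up $E$ to be short, so that every $z\in T\left(E\right)$ lies in $T\left(I\right)$ for some short arc $I\subset E$ and $T\left(E\right)$ is bounded away from the origin. The crux is the \emph{rotation-averaging estimate}
\[
\mu_b\left(T\left(E\right)\right)\le C\int_{\mathbb{T}}\mu_b\bigl(T_{\theta}\bigl(E_{\mathbb{D}}^{\rho}\bigr)\bigr)\,d\theta ,
\]
with $E_{\mathbb{D}}^{\rho}$ the disk blowup of Definition~\ref{diskblowup} and $C$ absolute. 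Granting it, the definition (\ref{Gmax}) of $M$ bounds the right side by $CM\int_{\mathbb{T}}Cap_{\theta}\bigl(E_{\mathbb{D}}^{\rho}\bigr)\,d\theta$; the rotation-invariant Corollary~\ref{capequiv} makes this $\approx CM\,Cap_{\mathbb{D}}\bigl(\bigcup_{I\subset E}I^{\rho}\bigr)$, and Bishop's inequality (\ref{B}) bounds \emph{that} by $C_{\rho}M\,Cap_{\mathbb{D}}\left(E\right)$. Dividing by $Cap_{\mathbb{D}}\left(E\right)$, taking the supremum over $E$, and using Stegenga's theorem once more yields $\Vert\mu_b\Vert_{\mathcal{D}-Carleson}^{2}\lesssim M$.

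\emph{Proof of the rotation-averaging estimate --- the main obstacle.} By Tonelli it is enough to establish the pointwise bound $\int_{\mathbb{T}}\chi_{T_{\theta}\left(E_{\mathbb{D}}^{\rho}\right)}\left(z\right)\,d\theta\ge c_{0}>0$ for every $z\in T\left(E\right)$, with $c_{0}$ independent of $z$ and of $E$; integrating this against $\mu_b$ gives the estimate with $C=1/c_{0}$. To produce such a bound I would argue with shifted dyadic grids, in the spirit of the later computation (\ref{muest}). If $z\in T\left(I\right)$ for some (necessarily short) arc $I\subset E$, then $1-\left\vert z\right\vert\lesssim\left\vert I\right\vert$ and the radial projection $\widehat{z}$ of $z$ lies within $O\left(\left\vert I\right\vert\right)$ of the midpoint of $I$. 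For a definite fraction of rotations $\theta$ one can choose, a few generations above the cell of $z$ in $\mathcal{T}_{\theta}$, a $\mathcal{T}_{\theta}$-dyadic arc $J$ with $\left\vert J\right\vert\lesssim\left\vert I\right\vert$, with $\widehat{z}$ in the middle portion of $J$ --- so that $z$ sits in the fat part of the tent $T\left(J\right)$ well below its apex and $z\in T\left(J\right)$ --- and with $J$ inside the $O\left(\left\vert I\right\vert\right)$-neighborhood of the midpoint of $I$, hence $J\subset I^{\rho}\subset E_{\mathbb{D}}^{\rho}$ provided $\left\vert I\right\vert$ is small enough (depending on $\rho$). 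For all such $\theta$, $z\in T_{\theta}\left(J\right)\subset T_{\theta}\bigl(E_{\mathbb{D}}^{\rho}\bigr)$, and the set of such $\theta$ has measure bounded below by an absolute constant. The delicate points --- and the reason I expect this step to require care --- are: relating the Euclidean tent $T\left(I\right)$ precisely to the angular position and scale of $z$; choosing how many generations to coarsen so that $z$ lands below the apex of $T\left(J\right)$ while $J$ stays inside $I^{\rho}$; the measure estimate for the ``good'' set of $\theta$ (a one-third-trick count); and the use of the smallness of $Cap_{\mathbb{D}}\left(E\right)$ to discard exceptional large-scale contributions near the origin.
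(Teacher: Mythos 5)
Your proposal is correct and follows the paper's proof essentially line by line: the easy inequality via $T_{\theta}\left(E\right)\subset T\left(E\right)$ and the rotation-uniformity of Corollary \ref{capequiv}, and the harder inequality via the rotation-averaged estimate $\mu_b\left(T\left(E\right)\right)\lesssim\int_{\mathbb{T}}\mu_b\left(T_{\theta}\left(E_{\mathbb{D}}^{\rho}\right)\right)d\theta$, then the definition of $M$, Corollary \ref{capequiv}, and Bishop's inequality (\ref{B}). The only place you go beyond the paper is in spelling out the Tonelli/shifted-dyadic-grid mechanism behind the rotation-averaging step, which the paper merely cites as being in the style of (\ref{muest}); your sketch is the intended one and is sound.
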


Given $0<\delta<1$, let $G$ be an open set in $\mathbb{T}$ such that
\begin{equation}
\frac{\int_{\mathbb{T}}\mu_{b}\left(  T_{\theta}\left(  G\right)  \right)
d\theta}{\int_{\mathbb{T}}Cap_{\theta}\left(  G\right)  d\theta}\geq\delta
M.\label{defGdelta}%
\end{equation}
We need to know that $\mu_{b}(V_{G}^{\beta}\setminus V_{G})$ is small compared
to $\mu_{b}\left(  V_{G}\right)  $. This crucial step of the proof is where we
use the asymptotic capacity estimate Lemma \ref{contain}.

\begin{proposition}
\label{3}Given $\varepsilon>0$ we can choose $\delta=\delta(\varepsilon)<1$ in
(\ref{defGdelta}) and $\beta=\beta(\varepsilon)<1$ so that for any $G$
satisfying (\ref{defGdelta}), we have, with $V_{G}^{\beta}=G_{\mathbb{D}%
}^{\beta}$ and $V_{G}=G_{\mathbb{D}}^{1}=T\left(  G\right)  $,
\begin{equation}
\mu_{b}(V_{G}^{\beta}\setminus V_{G})\leq\varepsilon\mu_{b}\left(
V_{G}\right)  ,\label{inpart}%
\end{equation}

\begin{proof}
Let $G^{\rho}\left(  \theta\right)  =G_{\mathcal{T}_{\theta}}^{\rho}$. Lemma
\ref{contain} shows that $Cap_{\theta}\left(  G^{\rho}\left(  \theta\right)
\right)  \leq\rho^{-2}Cap_{\theta}\left(  G\right)  $,\ for $0\leq\theta<2\pi
$, $0<\rho<1$, and if we integrate on $\mathbb{T}$ we obtain%
\[
\int_{\mathbb{T}}Cap_{\theta}\left(  G^{\rho}\left(  \theta\right)  \right)
d\mathcal{\theta}\leq\rho^{-2}\int_{\mathbb{T}}Cap_{\theta}\left(  G\right)
d\theta.
\]
From (\ref{Gmax}) and (\ref{defGdelta}) we thus have%
\begin{align*}
\int_{\mathbb{T}}\mu_{b}\left(  T_{\theta}\left(  G^{\rho}\left(
\theta\right)  \right)  \right)  d\mathcal{\theta}  & \leq M\int_{\mathbb{T}%
}Cap_{\theta}\left(  G^{\rho}\left(  \theta\right)  \right)  d\theta\\
& \leq M\rho^{-2}\int_{\mathbb{T}}Cap_{\theta}\left(  G\right)  d\theta\\
& \leq\frac{1}{\delta\rho^{2}}\int_{\mathbb{T}}\mu_{b}\left(  T_{\theta
}\left(  G\right)  \right)  d\theta.
\end{align*}
It follows that%
\begin{align*}
\int\limits_{\mathbb{T}}\mu_{b}\left(  T_{\theta}\left(  G^{\rho}\left(
\theta\right)  \right)  \setminus T_{\theta}\left(  G\right)  \right)  d\theta
& =\int\limits_{\mathbb{T}}\mu_{b}\left(  T_{\theta}\left(  G^{\rho}\left(
\theta\right)  \right)  \right)  d\mathcal{\theta}-\int\limits_{\mathbb{T}}%
\mu_{b}\left(  T_{\theta}\left(  G\right)  \right)  d\theta\\
& \leq\left(  \frac{1}{\delta\rho^{2}}-1\right)  \int\limits_{\mathbb{T}}%
\mu_{b}\left(  T_{\theta}\left(  G\right)  \right)  d\theta.
\end{align*}

Now with $\eta=(\rho+1)/2$,%
\begin{align}
& \int\limits_{\mathbb{T}}\mu_{b}\left(  T_{\theta}\left(  G^{\rho}\left(
\theta\right)  \right)  \setminus T_{\theta}\left(  G\right)  \right)
d\theta=\int\limits_{\mathbb{T}}\int\limits_{T_{\theta}\left(  G^{\rho}\left(
\theta\right)  \right)  \setminus T_{\theta}\left(  G\right)  }d\mu_{b}\left(
z\right)  d\theta\label{muest}\\
& \qquad\qquad\qquad\geq\int_{\mathbb{T}}\int\limits_{T_{\theta}\left(
G^{\rho}\left(  \theta\right)  \right)  \setminus T\left(  G\right)  }d\mu
_{b}\left(  z\right)  d\theta\nonumber\\
& \qquad\qquad\qquad\geq\int_{\mathbb{T}}\int\limits_{T_{\theta}\left(
G^{\rho}\left(  \theta\right)  \right)  \setminus T\left(  G\right)  }d\mu
_{b}\left(  z\right)  d\theta\nonumber
\end{align}%
\[
=\int\limits_{\mathbb{D}}\left\{  \frac{1}{2\pi}\int\limits_{\left\{
\theta:z\in T_{\theta}\left(  G^{\rho}\left(  \theta\right)  \right)
\setminus T\left(  G\right)  \right\}  }d\mathcal{\theta}\right\}  d\mu
_{b}\left(  z\right)  \geq\frac{1}{2}\int_{T\left(  G_{\mathbb{D}}^{\eta
}\right)  \setminus T\left(  G\right)  }d\mu_{b}\left(  z\right)  ,
\]
since every $z\in T\left(  G_{\mathbb{D}}^{\eta}\right)  $ lies in $T_{\theta
}\left(  G^{\rho}\left(  \theta\right)  \right)  $ for at least half of the
$\theta$'s in $\left[  0,2\pi\right)  $. Here we may assume that the
components of $G_{\mathbb{D}}^{\rho}$ have small length since otherwise we
trivially have $\int_{\mathbb{T}}Cap_{\mathcal{T}\left(  \theta\right)
}\left(  G\right)  d\mathcal{\theta}\geq c>0.$ We continue with%
\begin{equation}
M\leq\frac{1}{c}\int d\mu_{b}\leq\frac{1}{c}\left\Vert b\right\Vert
_{\mathcal{D}}^{2}\leq\frac{C}{c}\left\Vert T_{b}\right\Vert ^{2}%
.\label{otherwise}%
\end{equation}

Combining the above inequalities, using $\rho=2\eta-1$, $1/2\leq\rho<1$, and
choosing $\delta=\eta$, we obtain%
\begin{align*}
\mu_{b}\left(  T\left(  G_{\mathbb{D}}^{\eta}\right)  \setminus T\left(
G\right)  \right)   & \leq2\left(  \frac{1}{\delta\rho^{2}}-1\right)
\int_{\mathbb{T}}\mu_{b}\left(  T_{\theta}\left(  G\right)  \right)  d\theta\\
& =2\left(  \frac{1}{\eta\left(  2\eta-1\right)  ^{2}}-1\right)
\int_{\mathbb{T}}\mu_{b}\left(  T_{\theta}\left(  G\right)  \right)  d\theta\\
& \leq C\left(  1-\eta\right)  \int_{\mathbb{T}}\mu_{b}\left(  T_{\theta
}\left(  G\right)  \right)  d\theta,
\end{align*}
for $3/4\leq\eta<1$. Recalling that $V_{G}^{\eta}=T\left(  G_{\mathbb{D}%
}^{\eta}\right)  $ and that for all $\theta$ we have $T_{\theta}\left(
G\right)  \subset T\left(  G\right)  =V_{G}$ this becomes
\[
\mu_{b}\left(  V_{G}^{\eta}\setminus V_{G}\right)  \leq C\left(
1-\eta\right)  \int_{\mathbb{T}}\mu_{b}\left(  T_{\theta}\left(  G\right)
\right)  d\theta\leq C\left(  1-\eta\right)  \mu_{b}\left(  V_{G}\right)
,\ \frac{3}{4}\leq\eta<1,
\]
Hence given $\varepsilon>0$ it is possible to select $\delta$ and $\beta$\ so
that (\ref{inpart}) holds.
\end{proof}
\end{proposition}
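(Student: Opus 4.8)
The plan is to compare $\mu_{b}$ against capacity by averaging over rotations as in (\ref{Gmax}), exploiting the fact (Lemma~\ref{contain}) that a stopping-time blowup multiplies tree capacity by only the factor $\rho^{-2}$, which tends to $1$ as $\rho\to1^{-}$. Fix $\rho\in(\tfrac12,1)$, to be chosen at the end, and for each angle let $G^{\rho}(\theta)=G_{\mathcal{T}_{\theta}}^{\rho}$ denote the stopping-time blowup of $G$ in the rotated Bergman tree $\mathcal{T}_{\theta}$. Lemma~\ref{contain} gives $Cap_{\theta}(G^{\rho}(\theta))\leq\rho^{-2}Cap_{\theta}(G)$; integrating in $\theta$, then using the definition (\ref{Gmax}) of $M$ followed by the hypothesis (\ref{defGdelta}) on $G$, yields
\[
\int_{\mathbb{T}}\mu_{b}\bigl(T_{\theta}(G^{\rho}(\theta))\bigr)\,d\theta\;\leq\;M\rho^{-2}\int_{\mathbb{T}}Cap_{\theta}(G)\,d\theta\;\leq\;\frac{1}{\delta\rho^{2}}\int_{\mathbb{T}}\mu_{b}\bigl(T_{\theta}(G)\bigr)\,d\theta .
\]
Since $T_{\theta}(G)\subset T_{\theta}(G^{\rho}(\theta))$, subtracting $\int_{\mathbb{T}}\mu_{b}(T_{\theta}(G))\,d\theta$ from both sides bounds $\int_{\mathbb{T}}\mu_{b}\bigl(T_{\theta}(G^{\rho}(\theta))\setminus T_{\theta}(G)\bigr)\,d\theta$ by $\bigl(\tfrac{1}{\delta\rho^{2}}-1\bigr)\int_{\mathbb{T}}\mu_{b}(T_{\theta}(G))\,d\theta$, and because each $T_{\theta}(G)\subset T(G)=V_{G}$ the last integral is at most $\mu_{b}(V_{G})$.

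Next I would pass from this $\theta$-averaged estimate to one about the disk blowup. Enlarging the removed set from $T_{\theta}(G)$ to $V_{G}$ only shrinks the difference, so $\int_{\mathbb{T}}\mu_{b}\bigl(T_{\theta}(G^{\rho}(\theta))\setminus V_{G}\bigr)\,d\theta\leq\bigl(\tfrac{1}{\delta\rho^{2}}-1\bigr)\mu_{b}(V_{G})$, and Fubini rewrites the left side as $\int_{\mathbb{D}}\bigl|\{\theta:z\in T_{\theta}(G^{\rho}(\theta))\setminus V_{G}\}\bigr|\,d\mu_{b}(z)$, with $|\cdot|$ the normalized measure on $[0,2\pi)$. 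The crucial geometric input, which I expect to be the main obstacle, is the claim that with $\eta=(1+\rho)/2$ every $z\in T(G_{\mathbb{D}}^{\eta})=V_{G}^{\eta}$ lies in $T_{\theta}(G^{\rho}(\theta))$ for at least half of the $\theta\in[0,2\pi)$: that is, the disk blowup $G_{\mathbb{D}}^{\eta}$ is captured, in a majority of the rotated Bergman trees, by the coarser tree blowup $G^{\rho}(\theta)$, the exponent $\eta=(1+\rho)/2$ providing exactly the room needed to absorb the $\theta$-dependent shift of the dyadic boxes of $\mathcal{T}_{\theta}$. Granting this, the integral above is at least $\tfrac12\mu_{b}(V_{G}^{\eta}\setminus V_{G})$, so
\[
\mu_{b}\bigl(V_{G}^{\eta}\setminus V_{G}\bigr)\;\leq\;2\Bigl(\frac{1}{\delta\rho^{2}}-1\Bigr)\mu_{b}(V_{G}).
\]

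Finally I would calibrate the parameters: put $\rho=2\eta-1$ and $\delta=\eta$, so the prefactor is $2\bigl(\tfrac{1}{\eta(2\eta-1)^{2}}-1\bigr)$, which vanishes as $\eta\to1^{-}$ and, for $\tfrac34\leq\eta<1$, is at most $C(1-\eta)$ with $C$ absolute. Given $\varepsilon>0$ choose $\eta<1$ so that $C(1-\eta)\leq\varepsilon$, and set $\beta=\eta$ and $\delta(\varepsilon)=\eta$; then (\ref{inpart}) holds for every $G$ satisfying (\ref{defGdelta}) with this $\delta$. One reduction precedes all of this: by the standing convention one may assume $Cap_{\theta}(G)$ is small --- equivalently, by Corollary~\ref{capequiv}, that the components of $G_{\mathbb{D}}^{\rho}$ are short --- since otherwise $\int_{\mathbb{T}}Cap_{\theta}(G)\,d\theta$ is bounded below, which forces $M\leq C\|b\|_{\mathcal{D}}^{2}\leq C\|T_{b}\|^{2}$ by (\ref{Dnorm}) and makes the blowup argument (and indeed Theorem~\ref{main} itself) superfluous. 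Apart from that reduction and the ``at least half the angles'' geometric claim, the proof is bookkeeping with the inequalities assembled above.
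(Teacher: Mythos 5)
Your proposal is essentially identical to the paper's argument: the same use of Lemma~\ref{contain} to compare $Cap_{\theta}(G^{\rho}(\theta))$ with $\rho^{-2}Cap_{\theta}(G)$, the same $\theta$-averaging via (\ref{Gmax}) and (\ref{defGdelta}), the same unproved geometric claim that with $\eta=(1+\rho)/2$ each $z\in T(G_{\mathbb{D}}^{\eta})$ lies in $T_{\theta}(G^{\rho}(\theta))$ for a majority of angles, the same calibration $\rho=2\eta-1$, $\delta=\eta$, and the same reduction to the case of small $\int_{\mathbb{T}}Cap_{\theta}(G)\,d\theta$. The only difference is a harmless reordering of when the bound $\int_{\mathbb{T}}\mu_{b}(T_{\theta}(G))\,d\theta\leq\mu_{b}(V_{G})$ is invoked.
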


\subsection{Schur Estimates and a Bilinear Operator on Trees}

We begin with a bilinear version of Schur's well known theorem.

\begin{proposition}
\label{Schurtest}Let $\left(  X,\mu\right)  $, $\left(  Y,\nu\right)  $ and
$\left(  Z,\omega\right)  $ be measure spaces and $H\left(  x,y,z\right)  $ be
a nonnegative measurable function on $X\times Y\times Z$. Define, initially
for nonnegative functions $f,g,$%
\[
T\left(  f,g\right)  \left(  x\right)  =\int_{Y\times Z}H\left(  x,y,z\right)
f\left(  y\right)  d\nu\left(  y\right)  g\left(  z\right)  d\omega\left(
z\right)  ,\ x\in X,
\]
For $1<p<\infty$, suppose there are positive functions $h$, $k$ and $m$ on
$X$, $Y$ and $Z$ respectively such that%
\[
\int_{Y\times Z}H\left(  x,y,z\right)  k\left(  y\right)  ^{p^{\prime}%
}m\left(  z\right)  ^{p^{\prime}}d\nu\left(  y\right)  d\omega\left(
z\right)  \leq\left(  Ah\left(  x\right)  \right)  ^{p^{\prime}},
\]
for $\mu$-a.e.$\ x\in X$, and
\[
\int_{X}H\left(  x,y,z\right)  h\left(  x\right)  ^{p}d\mu\left(  x\right)
\leq\left(  Bk\left(  y\right)  m\left(  z\right)  \right)  ^{p},
\]
for $\nu\times\omega$-a.e.$\ \left(  y,z\right)  \in Y\times Z$. Then $T$ is
bounded from $L^{p}\left(  \nu\right)  \times L^{p}\left(  \omega\right)  $ to
$L^{p}\left(  \mu\right)  $ and $\left\Vert T\right\Vert _{operator}\leq AB.$
\end{proposition}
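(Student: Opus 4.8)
The plan is to mimic the classical proof of Schur's test, but now with the test function on the domain side replaced by the product $k(y)m(z)$ and the two hypotheses serving, respectively, as the ``column'' and ``row'' estimates. Specifically, I would fix nonnegative $f\in L^p(\nu)$ and $g\in L^p(\omega)$, fix $x\in X$, and split the integrand defining $T(f,g)(x)$ by inserting the factor $k(y)m(z)$ and its reciprocal:
\[
T(f,g)(x)=\int_{Y\times Z}\Bigl[H(x,y,z)^{1/p'}k(y)m(z)\Bigr]\Bigl[H(x,y,z)^{1/p}\frac{f(y)g(z)}{k(y)m(z)}\Bigr]\,d\nu(y)\,d\omega(z).
\]
Then I would apply H\"older's inequality in the variables $(y,z)$ with exponents $p'$ and $p$. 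The first factor raised to the $p'$ power integrates, by the first hypothesis, to at most $(Ah(x))^{p'}$, so its $L^{p'}$ norm is at most $Ah(x)$. This leaves
\[
T(f,g)(x)\le Ah(x)\left(\int_{Y\times Z}H(x,y,z)\,\frac{f(y)^p g(z)^p}{k(y)^p m(z)^p}\,d\nu(y)\,d\omega(z)\right)^{1/p}.
\]

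Next I would raise both sides to the $p$-th power, integrate in $x$ against $\mu$, and use Tonelli's theorem to interchange the order of integration (legitimate since everything is nonnegative):
\[
\|T(f,g)\|_{L^p(\mu)}^p\le A^p\int_{Y\times Z}\frac{f(y)^p g(z)^p}{k(y)^p m(z)^p}\left(\int_X H(x,y,z)h(x)^p\,d\mu(x)\right)d\nu(y)\,d\omega(z).
\]
Now the second hypothesis bounds the inner integral by $(Bk(y)m(z))^p$, so the factors $k(y)^p m(z)^p$ cancel and I am left with
\[
\|T(f,g)\|_{L^p(\mu)}^p\le (AB)^p\int_{Y\times Z}f(y)^p g(z)^p\,d\nu(y)\,d\omega(z)=(AB)^p\|f\|_{L^p(\nu)}^p\,\|g\|_{L^p(\omega)}^p,
\]
which is the claimed bound. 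For general (complex-valued) $f,g$ one applies the estimate to $|f|,|g|$ and uses $|T(f,g)(x)|\le T(|f|,|g|)(x)$ pointwise.

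There is no serious obstacle here; the only points requiring a little care are that all quantities are nonnegative (so that Tonelli applies and the order of integration may be freely swapped), that $h,k,m$ are strictly positive (so the division is valid and one does not divide by zero on a set of positive measure), and that the two hypotheses hold only almost everywhere, which is harmless since the integrations absorb null sets. The mild asymmetry in how the two arguments enter — they are handled symmetrically in the H\"older step but the test function $k(y)m(z)$ must be chosen to split multiplicatively — is exactly why the hypotheses are stated with the product $k(y)^{p'}m(z)^{p'}$ on one side and $(Bk(y)m(z))^p$ on the other. I would close by remarking that the two-variable ``row'' estimate is the genuinely new feature compared with the classical linear Schur test, and it is precisely this form that will be applied later to the bilinear tree operator.
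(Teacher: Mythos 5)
Your proposal is correct and follows essentially the same argument as the paper: H\"older's inequality in the $(y,z)$ variables with exponents $p'$ and $p$ after inserting $k(y)m(z)$ and its reciprocal, then Tonelli to swap integration order, then the two Schur hypotheses in turn. The only cosmetic difference is that the paper integrates the $p$-th power directly without first writing the pointwise $L^{p'}$ bound on the H\"older factor.
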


\begin{proof}
We have%
\begin{align*}
& \int_{X}\left\vert Tf\left(  x\right)  \right\vert ^{p}d\mu\left(  x\right)
\\
& \leq\int_{X}\left(  \int_{Y\times Z}H\left(  x,y,z\right)  k\left(
y\right)  ^{p^{\prime}}m\left(  z\right)  ^{p^{\prime}}d\nu\left(  y\right)
d\omega\left(  z\right)  \right)  ^{p/p^{\prime}}\\
& \times\left(  \int_{Y\times Z}H\left(  x,y,z\right)  \left(  \frac{f\left(
y\right)  }{k\left(  y\right)  }\right)  ^{p}d\nu\left(  y\right)  \left(
\frac{g\left(  z\right)  }{m\left(  z\right)  }\right)  ^{p}d\omega\left(
z\right)  \right)  d\mu\left(  x\right) \\
& \leq A^{p}\int_{Y\times Z}\left(  \int_{X}H\left(  x,y,z\right)  h\left(
x\right)  ^{p}d\mu\left(  x\right)  \right)  \left(  \frac{f\left(  y\right)
}{k\left(  y\right)  }\right)  ^{p}d\nu\left(  y\right)  \left(
\frac{g\left(  z\right)  }{m\left(  z\right)  }\right)  ^{p}d\omega\left(
z\right) \\
& \leq A^{p}B^{p}\int_{Y\times Z}k\left(  y\right)  ^{p}m\left(  z\right)
^{p}\left(  \frac{f\left(  y\right)  }{k\left(  y\right)  }\right)  ^{p}%
d\nu\left(  y\right)  \left(  \frac{g\left(  z\right)  }{m\left(  z\right)
}\right)  ^{p}d\omega\left(  z\right) \\
& =\left(  AB\right)  ^{p}\int_{Y}f\left(  y\right)  ^{p}d\nu\left(  y\right)
\int_{Z}g\left(  z\right)  ^{p}d\omega\left(  z\right)  .
\end{align*}

\end{proof}

This proposition can be used along with the estimates
\begin{equation}
\int_{\mathbb{D}}\frac{(1-\left\vert w\right\vert ^{2})^{t}}{\left\vert
1-\overline{w}z\right\vert ^{2+t+c}}dw\approx\left\{
\begin{array}
[c]{ccc}%
C_{t} & \text{ if } & c<0,\text{ }t>-1\\
-C_{t}\log(1-\left\vert z\right\vert ^{2}) & \text{ if } & c=0,\text{ }t>-1\\
C_{t}(1-\left\vert z\right\vert ^{2})^{-c} & \text{ if } & c>0,\text{ }t>-1
\end{array}
\right.  ,.\label{intest}%
\end{equation}
to prove a corollary we will use later \cite[Thm 2.10]{Zhu}.

\begin{corollary}
\label{210}Define%
\begin{align*}
Tf\left(  z\right)   & =(1-\left\vert z\right\vert ^{2})^{a}\int_{\mathbb{D}%
}\frac{(1-\left\vert w\right\vert ^{2})^{b}}{\left(  1-\overline{w}z\right)
^{2+a+b}}f\left(  w\right)  dw,\\
Sf\left(  z\right)   & =(1-\left\vert z\right\vert ^{2})^{a}\int_{\mathbb{D}%
}\frac{(1-\left\vert w\right\vert ^{2})^{b}}{\left\vert 1-\overline
{w}z\right\vert ^{2+a+b}}f\left(  w\right)  dw.
\end{align*}
Suppose $t\in\mathbb{R}$ and $1\leq p<\infty.$Then $T$ is bounded on
$L^{p}\left(  \mathbb{D},(1-\left\vert z\right\vert ^{2})^{t}dA\right)  $ if
and only if $S$ is bounded on $L^{p}\left(  \mathbb{D},(1-\left\vert
z\right\vert ^{2})^{t}dA\right)  $ if and only if%
\begin{equation}
-pa<t+1<p\left(  b+1\right)  .\label{paramcond}%
\end{equation}

\end{corollary}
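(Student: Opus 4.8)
The plan is to prove the cycle of implications: (\ref{paramcond}) $\Rightarrow$ $S$ bounded $\Rightarrow$ $T$ bounded $\Rightarrow$ (\ref{paramcond}), each time on $L^p\left(\mathbb{D},(1-\left\vert z\right\vert^2)^t dA\right)$. The middle implication is free: since $1-\overline{w}z$ lies in the right half plane for $z,w\in\mathbb{D}$, the principal branch satisfies $\left\vert(1-\overline{w}z)^{2+a+b}\right\vert=\left\vert 1-\overline{w}z\right\vert^{2+a+b}$, hence $\left\vert Tf(z)\right\vert\le S(\left\vert f\right\vert)(z)$ pointwise, and boundedness of $S$ passes to $T$.

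For (\ref{paramcond}) $\Rightarrow$ $S$ bounded, first suppose $1<p<\infty$ and apply Proposition \ref{Schurtest} with $(Z,\omega)$ a one point space, which collapses it to the classical Schur test. Absorbing the weight into the kernel one writes $Sf(z)=\int_{\mathbb{D}}H(z,w)(1-\left\vert w\right\vert^2)^t dA(w)$ with $H(z,w)=(1-\left\vert z\right\vert^2)^a(1-\left\vert w\right\vert^2)^{b-t}\left\vert 1-\overline{w}z\right\vert^{-(2+a+b)}$, and I would try the Schur weight $\phi(z)=(1-\left\vert z\right\vert^2)^{-\sigma}$. Both Schur integrals are then of the type (\ref{intest}): the first is comparable to $\phi^{p'}$ exactly when $-a/p'<\sigma<(b+1)/p'$, the second comparable to $\phi^{p}$ exactly when $(t-b)/p<\sigma<(a+t+1)/p$, the borderline ($c=0$) case of (\ref{intest}) being precisely what forces the inequalities to be strict. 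A common admissible $\sigma$ exists iff each lower bound lies below each upper bound, and unwinding the four resulting inequalities (using $p+p'=pp'$) shows that this is equivalent to $-pa<t+1<p(b+1)$, the auxiliary condition $a+b+1>0$ being automatic. The endpoint $p=1$, where Proposition \ref{Schurtest} does not apply, is done directly: by Fubini $S$ is bounded on $L^1((1-\left\vert z\right\vert^2)^t dA)$ iff $\int_{\mathbb{D}}H(z,w)(1-\left\vert z\right\vert^2)^t dA(z)\le C(1-\left\vert w\right\vert^2)^t$, which by (\ref{intest}) is again equivalent to (\ref{paramcond}).

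For $T$ bounded $\Rightarrow$ (\ref{paramcond}) I would test $T$ on the radial functions $f_\sigma(w)=(1-\left\vert w\right\vert^2)^{-\sigma}$. Expanding $(1-\overline{w}z)^{-(2+a+b)}$ in powers of $\overline{w}z$ and integrating term by term, rotational symmetry kills every term but the constant one, so for $\sigma<b+1$ one gets the clean identity $Tf_\sigma(z)=c_\sigma(1-\left\vert z\right\vert^2)^a$ with $c_\sigma=\int_{\mathbb{D}}(1-\left\vert w\right\vert^2)^{b-\sigma}dA(w)\approx(b+1-\sigma)^{-1}>0$, while $f_\sigma\in L^p((1-\left\vert z\right\vert^2)^t dA)$ iff $\sigma<(t+1)/p$, with norm $\approx(t+1-p\sigma)^{-1/p}$. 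Hence if $t+1\le -pa$ then $(1-\left\vert z\right\vert^2)^a\notin L^p((1-\left\vert z\right\vert^2)^t dA)$, so $Tf_\sigma$ escapes the space for every admissible $\sigma$ and $T$ cannot be bounded; and assuming $t+1>-pa$, letting $\sigma\uparrow\min\{(t+1)/p,\,b+1\}$ makes $\left\Vert Tf_\sigma\right\Vert/\left\Vert f_\sigma\right\Vert$ blow up whenever $t+1\ge p(b+1)$, which forces the remaining strict inequality. (At $p=1$ necessity is already contained in the Fubini computation above.)

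The fussiest step will be the Schur argument: keeping track of which of the three regimes of (\ref{intest}) governs each of the two Schur integrals, and checking that the resulting two sided constraint on $\sigma$ is solvable under precisely (\ref{paramcond}) and not some neighboring set of inequalities; by contrast the $p=1$ endpoint and the necessity direction are direct applications of (\ref{intest}) together with the explicit formula for $Tf_\sigma$.
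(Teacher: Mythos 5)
The paper gives no proof of Corollary~\ref{210} at all: it simply cites \cite[Thm 2.10]{Zhu} and remarks that Proposition~\ref{Schurtest} together with the estimates (\ref{intest}) does the job. Your proposal supplies exactly that argument in detail, and most of it is correct. The Schur computation with weight $\phi(z)=(1-\left\vert z\right\vert^{2})^{-\sigma}$ is right: the two Schur integrals reduce, via (\ref{intest}), to the constraints $-a/p'<\sigma<(b+1)/p'$ and $(t-b)/p<\sigma<(a+t+1)/p$, and the pairwise compatibility of these four inequalities (using $p+p'=pp'$) is precisely $-pa<t+1<p(b+1)$, with $a+b+1>0$ following automatically from those two. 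The trivial implication $\left\vert Tf\right\vert\leq S\left\vert f\right\vert$ is correct since $\operatorname{Re}\left(1-\overline{w}z\right)>0$. The radial test functions $f_{\sigma}(w)=(1-\left\vert w\right\vert^{2})^{-\sigma}$ with $Tf_{\sigma}=c_{\sigma}(1-\left\vert z\right\vert^{2})^{a}$, $c_{\sigma}\approx(b+1-\sigma)^{-1}$, establish necessity correctly whenever $p>1$, and also the half $-pa<t+1$ for all $p$.

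The gap is the single endpoint $p=1$, $t+1=p(b+1)$, i.e.\ $t=b$, in the necessity direction $T$ bounded $\Rightarrow$ (\ref{paramcond}). There one has $\left\Vert f_{\sigma}\right\Vert_{L^{1}(\mu_{t})}\approx(b+1-\sigma)^{-1}$ and $\left\Vert Tf_{\sigma}\right\Vert_{L^{1}(\mu_{t})}\approx c_{\sigma}\approx(b+1-\sigma)^{-1}$ as $\sigma\uparrow b+1$, so the ratio stays bounded and the blowup you rely on for $p>1$ disappears exactly at this corner. Your parenthetical fix — ``at $p=1$ necessity is already contained in the Fubini computation'' — does not close this: the Fubini computation characterizes boundedness of $S$ on $L^{1}$, and together with $S$ bounded $\Rightarrow$ $T$ bounded it only yields (\ref{paramcond}) $\Rightarrow$ $T$ bounded, not the needed converse. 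To finish $p=1$, $t=b$ you should instead test $T$ on indicators $f=\chi_{B(re^{i\theta},\,c(1-r))}$ of small balls near the boundary; then $\left\Vert f\right\Vert_{L^{1}(\mu_{b})}\approx(1-r)^{b+2}$ while the borderline case $c=0$ of (\ref{intest}) gives $\left\Vert Tf\right\Vert_{L^{1}(\mu_{b})}\approx(1-r)^{b+2}\log\frac{1}{1-r}$, so the ratio diverges. (This endpoint is immaterial to the rest of the paper, which invokes the corollary only with $p=2$, but it is needed for the statement as written.)
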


We now use Proposition \ref{Schurtest} to show that if $\mathcal{A}$,
$\mathcal{B\subset T}\ $are well separated then a certain bilinear operator
mapping on $\ell^{2}\left(  \mathcal{A}\right)  \times\ell^{2}\left(
\mathcal{B}\right)  $ maps boundedly into $L^{2}\left(  \mathbb{D}\right)  .$

\begin{lemma}
\label{bilin} Suppose $\mathcal{A}$ and $\mathcal{B}$ are subsets of $T$,
$h\in\ell^{2}\left(  \mathcal{A}\right)  $ and $k\in\ell^{2}\left(
\mathcal{B}\right)  ,$ and $1/2<\alpha<1.$ Suppose further that $\mathcal{A}$
and $\mathcal{B}$ satisfy the separation condition: $\forall\kappa
\in\mathcal{A}$, $\gamma\in$ $\mathcal{B}$ we have%
\begin{equation}
\left\vert \kappa-\gamma\right\vert \geq(1-\left\vert \gamma\right\vert
^{2})^{\alpha}.\label{sep}%
\end{equation}
Then the bilinear map of $(h,k)$ to functions on the disk given by
\[
T\left(  h,b\right)  \left(  z\right)  =\left(  \sum_{\kappa\in\mathcal{A}%
}h\left(  \kappa\right)  \frac{(1-\left\vert \kappa\right\vert ^{2})^{1+s}%
}{\left\vert 1-\overline{\kappa}z\right\vert ^{2+s}}\right)  \left(
\sum_{\gamma\in\mathcal{B}}b\left(  \gamma\right)  \frac{(1-\left\vert
\gamma\right\vert ^{2})^{1+s}}{\left\vert 1-\overline{\gamma}z\right\vert
^{1+s}}\right)
\]
is bounded from $\ell^{2}\left(  \mathcal{A}\right)  \times\ell^{2}\left(
\mathcal{B}\right)  $ to $L^{2}\left(  \mathbb{D}\right)  $.
\end{lemma}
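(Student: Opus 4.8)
The plan is to deduce the bound from the bilinear Schur test, Proposition~\ref{Schurtest}, applied with $p=p'=2$, with $X=\mathbb{D}$ carrying Lebesgue measure $dA$, with $Y=\mathcal{A}$ and $Z=\mathcal{B}$ carrying counting measure, and with kernel
\[
H(z,\kappa,\gamma)=\frac{(1-|\kappa|^{2})^{1+s}}{|1-\overline{\kappa}z|^{2+s}}\cdot\frac{(1-|\gamma|^{2})^{1+s}}{|1-\overline{\gamma}z|^{1+s}},
\]
so that, after replacing $h$ and $b$ by their absolute values (the kernel being positive), $T(h,b)(z)=\int_{\mathcal{A}\times\mathcal{B}}H(z,\kappa,\gamma)\,h(\kappa)\,b(\gamma)$. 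I would look for Schur weights that are powers of $1-|\cdot|^{2}$: $u(z)=(1-|z|^{2})^{-\varepsilon}$ on $\mathbb{D}$ with $\varepsilon\geq0$ small, $k(\kappa)=(1-|\kappa|^{2})^{-c}$ on $\mathcal{A}$, and $m(\gamma)=(1-|\gamma|^{2})^{-e}$ on $\mathcal{B}$, with the exponents $\varepsilon,c,e$ to be fixed at the end.

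First I would check the ``summation'' hypothesis of Proposition~\ref{Schurtest}. The double sum over $\mathcal{A}\times\mathcal{B}$ of $H(z,\kappa,\gamma)\,k(\kappa)^{2}m(\gamma)^{2}$ factors as a product of a sum over $\kappa$ and a sum over $\gamma$; each factor is dominated by the corresponding sum over all of $\mathcal{T}$, and since the summand is essentially constant on each Bergman box $B_{\kappa}$, with $|B_{\kappa}|\approx(1-|\kappa|^{2})^{2}$, that sum is comparable to an integral of the form evaluated in (\ref{intest}) (equivalently, controlled by Corollary~\ref{210}). This produces a power of $1-|z|^{2}$ on the right and pins down the admissible $\varepsilon$ in terms of $c$, $e$ and $s$, along with convergence restrictions (of the type $c,e<s/2$) coming from (\ref{intest}).

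The real work is the ``integration'' hypothesis: for each fixed $\kappa\in\mathcal{A}$ and $\gamma\in\mathcal{B}$ one must bound
\[
(1-|\kappa|^{2})^{1+s}(1-|\gamma|^{2})^{1+s}\int_{\mathbb{D}}\frac{(1-|z|^{2})^{-2\varepsilon}}{|1-\overline{\kappa}z|^{2+s}\,|1-\overline{\gamma}z|^{1+s}}\,dA(z)\ \le\ \bigl(B\,k(\kappa)\,m(\gamma)\bigr)^{2},
\]
and here the separation hypothesis (\ref{sep}) is essential. Writing $\Lambda=|1-\overline{\kappa}\gamma|$, the elementary inequalities $|z-w|\le|1-\overline{w}z|$ and $\Lambda\le|1-\overline{\kappa}z|+|1-\overline{\gamma}z|$ show that on the part of $\mathbb{D}$ where $|1-\overline{\kappa}z|\le|1-\overline{\gamma}z|$ one has $|1-\overline{\gamma}z|\gtrsim\Lambda$, while on the complementary part $|1-\overline{\kappa}z|\gtrsim\Lambda$; since moreover $\Lambda\ge|\kappa-\gamma|\ge(1-|\gamma|^{2})^{\alpha}$ by (\ref{sep}) (this is the point at which Lemma~\ref{geoseparation} gets used in applications), the two-pole integral reduces to a sum of two single-pole integrals, each evaluated by (\ref{intest}), times a negative power of $(1-|\gamma|^{2})^{\alpha}$. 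In the piece concentrated near $\gamma$ I would not discard the sharp dependence on $\Lambda$: decomposing that piece over the dyadic Carleson boxes at $\gamma$ produces an exponent of $1-|\gamma|^{2}$ with a little room to spare, which is what makes the bookkeeping succeed.

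The final step, which I expect to be the genuine obstacle, is to choose $\varepsilon$, $c$, $e$, and, importantly, the parameter $s$ --- which is at our disposal --- together with the fineness of the Bergman tree, so that the output of the integration estimate is actually $\lesssim k(\kappa)^{2}m(\gamma)^{2}$ while the summation estimate continues to hold. The two conditions pull the weight exponents in opposite directions --- in particular the contribution of $z$ near $\kappa$, where $\int(1-|z|^{2})^{-2\varepsilon}|1-\overline{\kappa}z|^{-(2+s)}dA\approx(1-|\kappa|^{2})^{-(s+2\varepsilon)}$, is the tight one --- and the margin between them is exactly the positive power of $1-|\gamma|^{2}$ gained from the separation. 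It is in reconciling these constraints that the hypotheses $1/2<\alpha<1$, and the freedom to take $s$ in a suitable range, are used. Once admissible exponents are found, Proposition~\ref{Schurtest} gives $\|T(h,b)\|_{L^{2}(\mathbb{D})}\le AB\,\|h\|_{\ell^{2}(\mathcal{A})}\|b\|_{\ell^{2}(\mathcal{B})}$, which is the assertion.
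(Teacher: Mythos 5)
Your proposal matches the paper's own proof in all essentials: the paper applies Proposition~\ref{Schurtest} with $p=2$ and with Schur weights $(1-|z|^2)^{-1/4}$ on $\mathbb{D}$, $(1-|\kappa|^2)^{1/4}$ on $\mathcal{A}$, and $(1-|\gamma|^2)^{\varepsilon/2}$ on $\mathcal{B}$, verifies the summation inequality (\ref{Schur1}) by factoring and invoking (\ref{intest}), and verifies the integration inequality (\ref{Schur2}) by decomposing the disk into regions according to distance from $\kappa^{*}$ and $\gamma^{*}$ and using the separation $(1-|\kappa|^{2})\lesssim|\kappa-\gamma|$ and $|\kappa-\gamma|\ge(1-|\gamma|^{2})^{\alpha}$ to gain the compensating positive power of $1-|\gamma|^{2}$ -- exactly the strategy and exactly the tight point ($z$ near $\kappa$) you identify.

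\end{document}
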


\begin{remark}
For $h\in\ell^{2}\left(  \mathcal{A}\right)  $ and $b\in\ell^{2}\left(
\mathcal{B}\right)  $ set
\[
H\left(  z\right)  =\sum_{\kappa\in\mathcal{A}}h\left(  \kappa\right)
\frac{(1-\left\vert \kappa\right\vert ^{2})^{1+s}}{\left(  1-\overline{\kappa
}z\right)  ^{2+s}}\text{, }B(z)=\sum_{\gamma\in\mathcal{B}}b(\gamma
)\frac{(1-\left\vert \gamma\right\vert ^{2})^{1+s}}{(1-\overline{\gamma
}z)^{1+s}}.
\]
By \cite[Thm 2.30]{Zhu} $H\in L^{2}\left(  \mathbb{D}\right)  $ and
$B\in\mathcal{D}$. There are unbounded functions in $\mathcal{D}$ hence these
facts do not insure show $HB\in L^{2}\left(  \mathbb{D}\right)  $.\ The lemma
shows that if $\mathcal{A}$ and $\mathcal{B}$ are separated then $HB\in
L^{2}\left(  \mathbb{D}\right)  .$

\begin{proof}
We will verify the hypotheses of the previous proposition. The kernel function
here is%
\[
H\left(  z,\kappa,\gamma\right)  =\frac{(1-\left\vert \kappa\right\vert
^{2})^{1+s}}{\left\vert 1-\overline{\kappa}z\right\vert ^{2+s}}\frac
{(1-\left\vert \gamma\right\vert ^{2})^{1+s}}{\left\vert 1-\overline{\gamma
}z\right\vert ^{1+s}},\ z\in\mathbb{D},\kappa\in\mathcal{A},\gamma
\in\mathcal{B},
\]
with Lebesgue measure on $\mathbb{D}$, and counting measure on $\mathcal{A}$
and $\mathcal{B}$. We will take as Schur functions%
\[
h\left(  z\right)  =(1-\left\vert z\right\vert ^{2})^{-1/4}\text{, }k\left(
\kappa\right)  =(1-\left\vert \kappa\right\vert ^{2})^{1/4}\text{ and
}m\left(  \gamma\right)  =(1-\left\vert \gamma\right\vert ^{2})^{\varepsilon
/2},
\]
on $\mathbb{D}$, $\mathcal{A}$ and $\mathcal{B}$\ respectively, where
$\varepsilon=\varepsilon(\alpha,s)>0$ will be chosen sufficiently small later.
We must then verify%
\begin{equation}
\sum_{\kappa\in\mathcal{A}}\sum_{\gamma\in\mathcal{B}}\frac{(1-\left\vert
\kappa\right\vert ^{2})^{3/2+s}}{\left\vert 1-\overline{\kappa}z\right\vert
^{2+s}}\frac{(1-\left\vert \gamma\right\vert ^{2})^{1+\varepsilon+s}%
}{\left\vert 1-\overline{\gamma}z\right\vert ^{1+s}}\leq A^{2}(1-\left\vert
z\right\vert ^{2})^{-1/2},\label{Schur1}%
\end{equation}
for $z\in\mathbb{D}$, and%
\begin{equation}
\int_{\mathbb{D}}\frac{(1-\left\vert \kappa\right\vert ^{2})^{1+s}}{\left\vert
1-\overline{\kappa}z\right\vert ^{2+s}}\frac{(1-\left\vert \gamma\right\vert
^{2})^{1+s}}{\left\vert 1-\overline{\gamma}z\right\vert ^{1+s}}(1-\left\vert
z\right\vert ^{2})^{-1/2}dA\leq B^{2}(1-\left\vert \kappa\right\vert
^{2})^{1/2}(1-\left\vert \gamma\right\vert ^{2})^{\varepsilon},\label{Schur2}%
\end{equation}
for $\kappa\in\mathcal{A}$ and $\gamma\in\mathcal{B}$.
\end{proof}
\end{remark}

\begin{lemma}
\begin{proof}
To prove (\ref{Schur1}) we write%
\begin{align*}
& \sum_{\kappa\in\mathcal{A}}\sum_{\gamma\in\mathcal{B}}\frac{(1-\left\vert
\kappa\right\vert ^{2})^{3/2+s}}{\left\vert 1-\overline{\kappa}z\right\vert
^{2+s}}\frac{(1-\left\vert \gamma\right\vert ^{2})^{1+\varepsilon+s}%
}{\left\vert 1-\overline{\gamma}z\right\vert ^{1+s}}=\\
& \qquad\qquad\qquad\qquad\left(  \sum_{\kappa\in\mathcal{A}}\frac
{(1-\left\vert \kappa\right\vert ^{2})^{3/2+s}}{\left\vert 1-\overline{\kappa
}z\right\vert ^{2+s}}\right)  \left(  \sum_{\gamma\in\mathcal{B}}%
\frac{(1-\left\vert \gamma\right\vert ^{2})^{1+\varepsilon+s}}{\left\vert
1-\overline{\gamma}z\right\vert ^{1+s}}\right)  .
\end{align*}
Then from (\ref{intest}) we obtain%
\[
\sum_{\kappa\in\mathcal{A}}\frac{(1-\left\vert \kappa\right\vert ^{2}%
)^{3/2+s}}{\left\vert 1-\overline{\kappa}z\right\vert ^{2+s}}\leq
C\int_{\mathbb{D}}\frac{(1-\left\vert w\right\vert ^{_{2}})^{-1/2+s}%
}{\left\vert 1-\overline{w}z\right\vert ^{2+s}}dw\leq C(1-\left\vert
z\right\vert ^{2})^{-1/2}%
\]
and%
\[
\sum_{\gamma\in\mathcal{B}}\frac{(1-\left\vert \gamma\right\vert
^{2})^{1+\varepsilon+s}}{\left\vert 1-\overline{\gamma}z\right\vert ^{1+s}%
}\leq C\int_{\zeta\in V_{G}}\frac{(1-\left\vert \zeta\right\vert
^{2})^{-1+\varepsilon+s}}{\left\vert 1-\overline{\zeta}z\right\vert ^{1+s}%
}dA\leq C,
\]
which yields (\ref{Schur1}).

We now prove (\ref{Schur2}) We will make repeated use\ of (\ref{sep}) as well
as its consequence via the triangle inequality: $\forall\kappa\in\mathcal{A}$,
$\gamma\in$ $\mathcal{B}$ $(1-\left\vert \kappa\right\vert ^{2})\leq
C\left\vert \kappa-\gamma\right\vert .$ We set $\kappa^{\ast}=\kappa
/\left\vert \kappa\right\vert ,$ $\gamma^{\ast}=\gamma/\left\vert
\gamma\right\vert .$%
\begin{align*}
& \int_{\mathbb{D}}\frac{(1-\left\vert \kappa\right\vert ^{2})^{1+s}%
}{\left\vert 1-\overline{\kappa}z\right\vert ^{2+s}}\frac{(1-\left\vert
\gamma\right\vert ^{2})^{1+s}}{\left\vert 1-\overline{\gamma}z\right\vert
^{1+s}}(1-\left\vert z\right\vert ^{2})^{-1/2}dA\\
& =\int\limits_{\left\vert z-\gamma^{\ast}\right\vert \leq1-\left\vert
\gamma\right\vert ^{2}}+\int\limits_{1-\left\vert \gamma\right\vert ^{2}%
\leq\left\vert z-\gamma^{\ast}\right\vert \leq\frac{1}{2}\left\vert
\kappa-\gamma\right\vert }\\
& +\int\limits_{\left\vert z-\kappa^{\ast}\right\vert \leq1-\left\vert
\kappa\right\vert ^{2}}+\int\limits_{1-\left\vert \kappa\right\vert ^{2}%
\leq\left\vert z-\kappa^{\ast}\right\vert \leq\frac{1}{2}\left\vert
\kappa-\gamma\right\vert }+\int\limits_{\left\vert z-\gamma^{\ast}\right\vert
,\left\vert z-\kappa^{\ast}\right\vert \geq\left\vert \kappa-\gamma\right\vert
}...dA\\
& =I+II+III+IV+V.
\end{align*}
We have%
\begin{align*}
I  & \approx\frac{(1-\left\vert \kappa\right\vert ^{2})^{1+s}}{\left\vert
\kappa-\gamma\right\vert ^{2+s}}\int_{\left\vert z-\gamma^{\ast}\right\vert
\leq1-\left\vert \gamma\right\vert ^{2}}(1-\left\vert z\right\vert
^{2})^{-1/2}dA\\
& \approx\frac{(1-\left\vert \kappa\right\vert ^{2})^{1+s}(1-\left\vert
\gamma\right\vert ^{2})^{3/2}}{\left\vert \kappa-\gamma\right\vert ^{2+s}}\leq
C(1-\left\vert \kappa\right\vert ^{2})^{1/2}(1-\left\vert \gamma\right\vert
^{2})^{3\left(  1-\alpha\right)  /2}.
\end{align*}
Similarly we have%
\begin{align*}
II  & \approx\frac{(1-\left\vert \kappa\right\vert ^{2})^{1+s}(1-\left\vert
\gamma\right\vert ^{2})^{1+s}}{\left\vert \kappa-\gamma\right\vert ^{2+s}}%
\int\limits_{1-\left\vert \gamma\right\vert ^{2}\leq\left\vert z-\gamma^{\ast
}\right\vert \leq\frac{1}{2}\left\vert \kappa-\gamma\right\vert }%
\frac{(1-\left\vert z\right\vert ^{2})^{-1/2}}{\left\vert z-\gamma^{\ast
}\right\vert ^{1+s}}dA\\
& \approx\frac{(1-\left\vert \kappa\right\vert ^{2})^{1+s}(1-\left\vert
\gamma\right\vert ^{2})^{1+s}}{\left\vert \kappa-\gamma\right\vert ^{2+s}%
}(1-\left\vert \gamma\right\vert ^{2})^{1/2-s}\\
& =\frac{(1-\left\vert \kappa\right\vert ^{2})^{1+s}(1-\left\vert
\gamma\right\vert ^{2})^{3/2}}{\left\vert \kappa-\gamma\right\vert ^{2+s}}\leq
C(1-\left\vert \kappa\right\vert ^{2})^{1/2}(1-\left\vert \gamma\right\vert
^{2})^{3\left(  1-\alpha\right)  /2}.
\end{align*}
Continuing we obtain%
\[
III\approx\frac{(1-\left\vert \kappa\right\vert ^{2})^{1/2}(1-\left\vert
\gamma\right\vert ^{2})^{1+s}}{\left\vert \kappa-\gamma\right\vert ^{1+s}}\leq
C(1-\left\vert \kappa\right\vert ^{2})^{1/2}(1-\left\vert \gamma\right\vert
^{2})^{\left(  1+s\right)  \left(  1-\alpha\right)  },
\]
and similarly,%
\[
IV\leq C(1-\left\vert \kappa\right\vert ^{2})^{1/2}(1-\left\vert
\gamma\right\vert ^{2})^{\varepsilon},
\]
for some $\varepsilon>0$. Finally%
\begin{align*}
V  & \approx\int\limits_{\left\vert z-\gamma^{\ast}\right\vert ,\left\vert
z-\kappa^{\ast}\right\vert \geq\left\vert \kappa-\gamma\right\vert }%
\frac{(1-\left\vert \kappa\right\vert ^{2})^{1+s}}{\left\vert z-\kappa^{\ast
}\right\vert ^{2+s}}\frac{(1-\left\vert \gamma\right\vert ^{2})^{1+s}%
}{\left\vert z-\gamma^{\ast}\right\vert ^{1+s}}(1-\left\vert z\right\vert
^{2})^{-1/2}dA\\
& \approx\frac{(1-\left\vert \kappa\right\vert ^{2})^{1+s}(1-\left\vert
\gamma\right\vert ^{2})^{1+s}}{\left\vert \kappa-\gamma\right\vert ^{3/2+2s}%
}\\
& \leq C(1-\left\vert \kappa\right\vert ^{2})^{1/2}(1-\left\vert
\gamma\right\vert ^{2})^{\left(  1+s\right)  \left(  1-\alpha\right)  }.
\end{align*}

\end{proof}
\end{lemma}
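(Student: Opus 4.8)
The plan is to obtain the lemma from the bilinear Schur test, Proposition \ref{Schurtest}, taken with $p=p'=2$, Lebesgue measure on $\mathbb{D}$, counting measure on $\mathcal{A}$ and $\mathcal{B}$, and kernel $H(z,\kappa,\gamma)=(1-|\kappa|^{2})^{1+s}|1-\overline{\kappa}z|^{-(2+s)}(1-|\gamma|^{2})^{1+s}|1-\overline{\gamma}z|^{-(1+s)}$. For Schur weights I would take $h(z)=(1-|z|^{2})^{-1/4}$ on $\mathbb{D}$, $k(\kappa)=(1-|\kappa|^{2})^{1/4}$ on $\mathcal{A}$, and $m(\gamma)=(1-|\gamma|^{2})^{\varepsilon/2}$ on $\mathcal{B}$, with $\varepsilon=\varepsilon(\alpha,s)>0$ fixed only at the end. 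With these choices the two hypotheses of Proposition \ref{Schurtest} reduce to the pointwise inequality
\[
\sum_{\kappa\in\mathcal{A}}\sum_{\gamma\in\mathcal{B}}\frac{(1-|\kappa|^{2})^{3/2+s}}{|1-\overline{\kappa}z|^{2+s}}\,\frac{(1-|\gamma|^{2})^{1+\varepsilon+s}}{|1-\overline{\gamma}z|^{1+s}}\lesssim(1-|z|^{2})^{-1/2},\qquad z\in\mathbb{D},
\]
and the integral inequality
\[
\int_{\mathbb{D}}\frac{(1-|\kappa|^{2})^{1+s}}{|1-\overline{\kappa}z|^{2+s}}\,\frac{(1-|\gamma|^{2})^{1+s}}{|1-\overline{\gamma}z|^{1+s}}\,(1-|z|^{2})^{-1/2}\,dA\lesssim(1-|\kappa|^{2})^{1/2}(1-|\gamma|^{2})^{\varepsilon},
\]
uniform in $\kappa\in\mathcal{A}$, $\gamma\in\mathcal{B}$; once both hold, Proposition \ref{Schurtest} gives the asserted boundedness.

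The pointwise inequality is the easy half. Its left side factors as a product of two single sums, and each sum dominates a Bergman integral because the boxes $B_{\kappa}$ have area $\approx(1-|\kappa|^{2})^{2}$ and bounded overlap: $\sum_{\kappa}(1-|\kappa|^{2})^{3/2+s}|1-\overline{\kappa}z|^{-(2+s)}\lesssim\int_{\mathbb{D}}(1-|w|^{2})^{-1/2+s}|1-\overline{w}z|^{-(2+s)}\,dA(w)$, which by (\ref{intest}) (the case $c=\tfrac12>0$, valid for $s>-\tfrac12$) is $\approx(1-|z|^{2})^{-1/2}$; and $\sum_{\gamma}(1-|\gamma|^{2})^{1+\varepsilon+s}|1-\overline{\gamma}z|^{-(1+s)}\lesssim\int_{\mathbb{D}}(1-|\zeta|^{2})^{-1+\varepsilon+s}|1-\overline{\zeta}z|^{-(1+s)}\,dA(\zeta)$, which by the case $c=-\varepsilon<0$ of (\ref{intest}) is $\lesssim1$ provided $s+\varepsilon>0$. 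The product of the two is the right side.

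The integral inequality is the core, and it is here that the separation hypothesis (\ref{sep}) — together with its triangle-inequality corollary $1-|\kappa|^{2}\le C|\kappa-\gamma|$ — is essential. Put $\kappa^{*}=\kappa/|\kappa|$, $\gamma^{*}=\gamma/|\gamma|$, and split $\mathbb{D}$ into the Carleson box $|z-\gamma^{*}|\le 1-|\gamma|^{2}$, the annulus $1-|\gamma|^{2}\le|z-\gamma^{*}|\le\tfrac12|\kappa-\gamma|$, the mirror box and annulus around $\kappa^{*}$, and the far region $|z-\gamma^{*}|,|z-\kappa^{*}|\gtrsim|\kappa-\gamma|$. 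On each of the first four pieces one of the two factors $|1-\overline{\kappa}z|$, $|1-\overline{\gamma}z|$ is essentially constant — it is $\approx|\kappa-\gamma|$ wherever $z$ is close to the other pole — so the surviving integral is an elementary radial integral of $(1-|z|^{2})^{-1/2}$, or of $(1-|z|^{2})^{-1/2}|z-\gamma^{*}|^{-(1+s)}$; in every piece this produces $(1-|\kappa|^{2})^{1/2}$ and, after one invokes $|\kappa-\gamma|\ge(1-|\gamma|^{2})^{\alpha}$, a \emph{strictly positive} power of $1-|\gamma|^{2}$ — such as $(1-|\gamma|^{2})^{3(1-\alpha)/2}$ near $\gamma^{*}$ and $(1-|\gamma|^{2})^{(1+s)(1-\alpha)}$ near $\kappa^{*}$, with the exponent on the $\kappa^{*}$-annulus the least transparent. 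Taking $\varepsilon$ to be the minimum of all the positive exponents so obtained closes the estimate; note $1-\alpha>0$ is exactly what makes these exponents positive.

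The step I expect to give the most trouble is the far region, where neither factor simplifies. There one must estimate $\int_{|z-\kappa^{*}|,|z-\gamma^{*}|\gtrsim|\kappa-\gamma|}(1-|\kappa|^{2})^{1+s}|z-\kappa^{*}|^{-(2+s)}(1-|\gamma|^{2})^{1+s}|z-\gamma^{*}|^{-(1+s)}(1-|z|^{2})^{-1/2}\,dA$; a polar computation, separated according to which of $|z-\kappa^{*}|$, $|z-\gamma^{*}|$ is larger, should give $\approx(1-|\kappa|^{2})^{1+s}(1-|\gamma|^{2})^{1+s}|\kappa-\gamma|^{-(3/2+2s)}$, after which $1-|\kappa|^{2}\le C|\kappa-\gamma|$ (this uses $\tfrac12+s\ge0$) and the separation bound turn it into $(1-|\kappa|^{2})^{1/2}(1-|\gamma|^{2})^{(1+s)(1-\alpha)}$. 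Two routine checks remain: that the five pieces exhaust $\mathbb{D}$ — the apparent gap $\tfrac12|\kappa-\gamma|<|z-\kappa^{*}|<|\kappa-\gamma|$ is harmless since there $|z-\kappa^{*}|,|z-\gamma^{*}|\approx|\kappa-\gamma|$, so it is absorbed into the far region — and that every exponent fed to (\ref{intest}) satisfies the side conditions stated there, which is what forces the standing hypothesis $s>-\tfrac12$.
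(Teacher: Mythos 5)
Your proposal follows the paper's argument essentially line for line: the same Schur weights $h,k,m$, the same factorization of the double sum and comparison to Bergman-type integrals via (\ref{intest}) for the pointwise bound, and the same five-region decomposition of $\mathbb{D}$ (Carleson box and annulus around $\gamma^{*}$, mirror pieces around $\kappa^{*}$, far region) together with the separation consequences $|\kappa-\gamma|\ge(1-|\gamma|^{2})^{\alpha}$ and $1-|\kappa|^{2}\lesssim|\kappa-\gamma|$ for the integral bound. You also correctly flag the two subtleties the paper leaves implicit — that the "gap" between $\tfrac12|\kappa-\gamma|$ and $|\kappa-\gamma|$ is harmlessly absorbed into the far region, and that the exponent restrictions in (\ref{intest}) are what force $s>-\tfrac12$ (and $1+s,\tfrac12+s\ge 0$ in the far-region reduction) — so this is the paper's proof with the side conditions made explicit.
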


\section{The Main Bilinear Estimate}

To complete the proof we will show that $\mu_{b}$ is a $\mathcal{D}$-Carleson
measure by verifying Stegenga's condition (\ref{Steg}); that is, we will show
that for any finite collection of disjoint arcs $\left\{  I_{j}\right\}
_{j=1}^{N}$ in the circle $\mathbb{T}$ we have
\[
\mu_{b}\left(  \bigcup\nolimits_{j=1}^{N}T\left(  I_{j}\right)  \right)  \leq
C\ Cap_{\mathbb{D}}\left(  \bigcup\nolimits_{j=1}^{N}I_{j}\right)  .
\]
In fact we will see that it suffices to verify this for the sets $G=\cup
_{j=1}^{N}I_{j}$ described in (\ref{defGdelta}) that are almost extremal for
(\ref{Gmax}). We will prove the inequality%
\begin{equation}
\mu_{b}\left(  V_{G}\right)  \leq C\left\Vert T_{b}\right\Vert ^{2}%
Cap_{\mathbb{D}}\left(  G\right)  .\label{maintheta}%
\end{equation}
Once we have this Corollary \ref{capequiv} yields%
\[
M=\frac{\int_{\mathbb{T}}\mu_{b}\left(  T_{\theta}\left(  G\right)  \right)
d\theta}{\int_{\mathbb{T}}Cap_{\theta}\left(  G\right)  d\theta}\leq\frac
{\mu_{b}\left(  V_{G}\right)  }{\int_{\mathbb{T}}Cap_{\theta}\left(  G\right)
d\theta}\leq C\left\Vert T_{b}\right\Vert ^{2}.
\]
By Corollary \ref{M} $\left\Vert \mu_{b}\right\Vert _{\mathcal{D}%
-Carleson}^{2}\approx M\ $which then completes the proof of Theorem \ref{main}.

We now turn to (\ref{maintheta}). Let $1/2<\beta<\beta_{1}<\gamma<\alpha<1$
with additional constraints to be added later. Suppose $G$ (\ref{defGdelta})
with $\varepsilon>0$ to be chosen below. We define in succession the following
regions in the disk,%
\begin{align*}
V_{G}  & =T_{\mathcal{T}}\left(  G\right)  ,\\
V_{G}^{\alpha}  & =G_{\mathbb{D}}^{\alpha},\\
V_{G}^{\gamma}  & =\widehat{\left(  V_{G}^{\alpha}\right)  _{\mathcal{T}%
}^{\gamma/\alpha}},\\
V_{G}^{\beta}  & =\left(  V_{G}^{\gamma}\right)  _{\mathbb{D}}^{\beta/\gamma}.
\end{align*}
Thus $V_{G}$ is the $\mathcal{T}$-tent associated with $G$, $V_{G}^{\alpha}$
is a disk blowup of $G$, $V_{G}^{\gamma}$ is a $\mathcal{T}$-capacitary blowup
of $V_{G}^{\alpha}$, and $V_{G}^{\beta}$ is a disk blowup of $V_{G}^{\gamma}$.
Using the natural bijections described earlier, we write%
\begin{equation}
V_{G}=\left\{  w_{k}\right\}  _{k}\text{ and }V_{G}^{\alpha}=\left\{
w_{k}^{\alpha}\right\}  _{k}\text{ and }V_{G}^{\gamma}=\left\{  w_{k}^{\gamma
}\right\}  _{k}\text{ and }V_{G}^{\beta}=\{w_{k}^{\beta}\}_{k}%
,\label{def(E,F)}%
\end{equation}
with $w_{k},$ $w_{k}^{\alpha},$ $w_{k}^{\gamma},$ $w_{k}^{\beta}\in
\mathcal{T}$. Following earlier notation we write $E=V_{G}^{\alpha}$ and
$F=V_{G}^{\gamma}$.

We proceed by estimating $T_{b}(f,g)$ for well chosen $f\ $and $g$ in
$\mathcal{D}.$ Let $\Phi$ be as in (\ref{defFw}); we then have the estimates
in Proposition \ref{newBoe} and Corollary \ref{smallcap}. Set $g=\Phi^{2};$ in
particular note that $g$ is approximately equal to $\chi_{V_{G}}.$ The
function $f$ will be, approximately, $b^{\prime}\chi_{V_{G}};$
\begin{equation}
f\left(  z\right)  =\Gamma_{s}\left(  \frac{1}{\left(  1+s\right)
\overline{\zeta}}\chi_{V_{G}}b^{\prime}\left(  \zeta\right)  \right)  \left(
z\right)  =\int\limits_{V_{G}}\frac{b^{\prime}\left(  \zeta\right)
(1-\left\vert \zeta\right\vert ^{2})^{s}}{\left(  1-\overline{\zeta}z\right)
^{1+s}}\frac{dA}{\left(  1+s\right)  \overline{\zeta}}.\label{newf}%
\end{equation}

We now analyze $T_{b}(f,g).$ From (\ref{newf}) and (\ref{repro}) we have%
\begin{align*}
f^{\prime}\left(  z\right)   & =\int_{V_{G}}\frac{b^{\prime}\left(
\zeta\right)  (1-\left\vert \zeta\right\vert ^{2})^{s}}{\left(  1-\overline
{\zeta}z\right)  ^{2+s}}dA\\
& =b^{\prime}\left(  z\right)  -\int_{\mathbb{D}\setminus V_{G}}%
\frac{b^{\prime}\left(  \zeta\right)  (1-\left\vert \zeta\right\vert ^{2}%
)^{s}}{\left(  1-\overline{\zeta}z\right)  ^{2+s}}dA\\
& =b^{\prime}\left(  z\right)  +\Lambda b^{\prime}\left(  z\right)  ,
\end{align*}
where the last term is defined by%
\begin{equation}
\Lambda b^{\prime}\left(  z\right)  =-\int_{\mathbb{D}\setminus V_{G}}%
\frac{b^{\prime}\left(  \zeta\right)  \left(  1-\left\vert \zeta\right\vert
\right)  ^{s}}{\left(  1-\overline{\zeta}z\right)  ^{2+s}}dA.\label{defLambda}%
\end{equation}
We have%

\begin{align}
T_{b}\left(  f,g\right)   & =\left(  f\Phi^{2}\bar{b}\right)  (0)+\int
_{\mathbb{D}}\left\{  f^{\prime}\left(  z\right)  \Phi\left(  z\right)
+2f\left(  z\right)  \Phi^{\prime}\left(  z\right)  \right\}  \Phi\left(
z\right)  \overline{b^{\prime}\left(  z\right)  }dA\label{rr1}\\
& =\left(  f\Phi^{2}\bar{b}\right)  (0)+\int_{\mathbb{D}}\left\vert b^{\prime
}\left(  z\right)  \right\vert ^{2}\Phi\left(  z\right)  ^{2}dA\nonumber\\
& +2\int_{\mathbb{D}}\Phi\left(  z\right)  \Phi^{\prime}\left(  z\right)
f\left(  z\right)  \overline{b^{\prime}\left(  z\right)  }dA+\int_{\mathbb{D}%
}\Lambda b^{\prime}\left(  z\right)  \overline{b^{\prime}\left(  z\right)
}\Phi\left(  z\right)  ^{2}dA\nonumber\\
& =(1)+(2)+(3)+(4).\nonumber
\end{align}

Now we write%
\begin{align}
(2)  & =\int_{\mathbb{D}}\left\vert b^{\prime}\left(  z\right)  \right\vert
^{2}\Phi\left(  z\right)  ^{2}dA\label{I}\\
& =\left\{  \int_{V_{G}}+\int_{V_{G}^{\beta}\setminus V_{G}}+\int
_{\mathbb{D}\setminus V_{G}^{\beta}}\right\}  \left\vert b^{\prime}\left(
z\right)  \right\vert ^{2}\Phi\left(  z\right)  ^{2}dA\nonumber\\
& =(2_{A})+(2_{B})+(2_{C}).\nonumber
\end{align}
The main term is (2$_{A}$). By (\ref{estimates}) and (\ref{Dnorm}) it
satisfies%
\begin{align}
(2_{A})  & =\mu_{b}\left(  V_{G}\right)  +\int_{V_{G}}\left\vert b^{\prime
}\left(  z\right)  \right\vert ^{2}(\Phi\left(  z\right)  ^{2}-1)dA\label{IV}%
\\
& =\mu_{b}\left(  V_{G}\right)  +O(\left\Vert T_{b}\right\Vert ^{2}%
Cap_{\mathcal{T}}\left(  E,F\right)  ),\nonumber
\end{align}
Rearranging this and using (\ref{rr1}) and (\ref{I}) we find%
\begin{equation}
\mu_{b}\left(  V_{G}\right)  \leq C\left\Vert T_{b}\right\Vert ^{2}%
Cap_{\mathcal{T}}\left(  E,F\right)  +\left\vert T_{b}(f,g)\right\vert
+\left\vert (1)\right\vert +(2_{B})+(2_{C})+\left\vert (3)\right\vert
+\left\vert (4)\right\vert \label{rr2}%
\end{equation}

Using the boundedness of $T_{b}$ and Corollary \ref{smallcap} we have
\begin{align}
\left\vert T_{b}(f,g)\right\vert  & =\left\vert T_{b}(f,\Phi^{2})\right\vert
=\left\vert T_{b}(f\Phi,\Phi)\right\vert \label{norm}\\
& \leq\left\Vert T_{b}\right\Vert \left\Vert f\Phi\right\Vert _{\mathcal{D}%
}\left\Vert \Phi\right\Vert _{\mathcal{D}}\leq C\left\Vert T_{b}\right\Vert
\left\Vert f\Phi\right\Vert _{\mathcal{D}}\sqrt{Cap_{\mathcal{T}}\left(
E,F\right)  }.\nonumber
\end{align}
For $(1)$ we use the elementary estimate%
\[
\left\vert (1)\right\vert \leq C\left\Vert b\right\Vert _{\mathcal{D}}%
^{2}Cap_{\mathcal{T}}\left(  E,F\right)  \leq C\left\Vert T_{b}\right\Vert
^{2}Cap_{\mathcal{T}}\left(  E,F\right)  .
\]
For (2$_{B}$) we use (\ref{inpart}) to obtain%
\begin{equation}
(2_{B})\leq C\mu_{b}\left(  V_{G}^{\beta}\setminus V_{G}\right)  \leq
C\varepsilon\mu_{b}\left(  V_{G}\right)  .\label{V}%
\end{equation}
Using (\ref{estimates}) once more, we see that (2$_{C}$) satisfies%
\begin{equation}
(2_{C})\leq\int_{\mathbb{D}\setminus V_{G}^{\beta}}\left\vert b^{\prime
}\left(  z\right)  \right\vert ^{2}\left(  C_{\alpha,\beta,\rho}%
Cap_{\mathcal{T}}\left(  E,F\right)  \right)  ^{2}dA\leq C\left\Vert
T_{b}\right\Vert ^{2}Cap_{\mathcal{T}}\left(  E,F\right)  .\label{VI}%
\end{equation}
Putting these estimates into (\ref{rr2}) we obtain
\begin{equation}
\mu_{b}\left(  V_{G}\right)  \leq C\left(  \left\Vert T_{b}\right\Vert
^{2}Cap_{\mathcal{T}}\left(  E,F\right)  +\left\Vert T_{b}\right\Vert
\left\Vert f\Phi\right\Vert _{\mathcal{D}}\sqrt{Cap_{\mathcal{T}}\left(
E,F\right)  }+\left\vert (3)\right\vert +\left\vert (4)\right\vert \right)
.\label{rr3}%
\end{equation}

For small positive $\varepsilon$ we estimate (3) using Cauchy-Schwarz as
follows:%
\begin{align*}
\left\vert (3)\right\vert  & \leq2\int_{\mathbb{D}}\left\vert \Phi\left(
z\right)  b^{\prime}\left(  z\right)  \right\vert \left\vert \Phi^{\prime
}\left(  z\right)  f\left(  z\right)  \right\vert dA\\
& \leq\varepsilon\int_{\mathbb{D}}\left\vert \Phi\left(  z\right)  b^{\prime
}\left(  z\right)  \right\vert ^{2}dA+\frac{C}{\varepsilon}\int_{\mathbb{D}%
}\left\vert \Phi^{\prime}\left(  z\right)  f\left(  z\right)  \right\vert
^{2}dA\\
& =(3_{A})+(3_{B}).
\end{align*}
Using the decomposition and the argument surrounding term (2) we obtain%
\begin{align}
(3_{A})  & \leq\varepsilon\left\{  \int_{V_{G}}+\int_{V_{G}^{\beta}\setminus
V_{G}}+\int_{\mathbb{D}\setminus V_{G}^{\beta}}\right\}  \left\vert
\Phi\left(  z\right)  b^{\prime}\left(  z\right)  \right\vert ^{2}%
dA\label{VII}\\
& \leq C\varepsilon\left(  \mu_{b}\left(  V_{G}\right)  +C\left\Vert
T_{b}\right\Vert ^{2}Cap_{\mathcal{T}}\left(  E,F\right)  \right)  .\nonumber
\end{align}
To estimate term $(3_{B})$ we use%
\begin{align*}
\left\vert f\left(  z\right)  \right\vert  & \leq\left\vert \Gamma_{s}\left(
\frac{1}{\left(  1+s\right)  \overline{\zeta}}\chi_{V_{G}}b^{\prime}\left(
\zeta\right)  \right)  \left(  z\right)  \right\vert \\
& \leq\int_{V_{G}}\frac{\left(  1-\left\vert \zeta\right\vert ^{2}\right)
^{s}}{\left\vert 1-\overline{\zeta}z\right\vert ^{1+s}}\left\vert b^{\prime
}\left(  \zeta\right)  \right\vert dA\\
& \approx\sum_{\gamma\in\mathcal{T}\cap V_{G}}\frac{(1-\left\vert
\gamma\right\vert ^{2})^{1+s}}{\left\vert 1-\overline{\gamma}z\right\vert
^{1+s}}\int_{B_{\gamma}}\left\vert b^{\prime}\left(  \zeta\right)  \right\vert
\left(  1-\left\vert \zeta\right\vert ^{2}\right)  d\lambda\left(
\zeta\right) \\
& =\sum_{\gamma\in\mathcal{T}\cap V_{G}}\frac{(1-\left\vert \gamma\right\vert
^{2})^{1+s}}{\left\vert 1-\overline{\gamma}z\right\vert ^{1+s}}b\left(
\gamma\right)  ,
\end{align*}
where%
\[
\sum_{\gamma\in\mathcal{T}\cap V_{G}}b\left(  \gamma\right)  ^{2}\approx
\sum_{\gamma\in\mathcal{T}\cap V_{G}}\int_{B_{\gamma}}\left\vert b^{\prime
}\left(  \zeta\right)  \right\vert ^{2}(1-\left\vert \zeta\right\vert
^{2})^{2}d\lambda\left(  \zeta\right)  =\int_{V_{G}}\left\vert b^{\prime
}\left(  \zeta\right)  \right\vert ^{2}dA.
\]

We now use the separation of $\mathbb{D}\setminus V_{G}^{\alpha}$ and $V_{G}.$
The facts that $\mathcal{A}=supp\left(  h\right)  \subset\mathbb{D}\setminus
V_{G}^{\alpha}$ and $\mathcal{B}=\mathcal{T}\cap V_{G}\subset V_{G}$, together
with Lemma \ref{geoseparation}, insure that (\ref{sep}) is satisfied and hence
we can use Lemma \ref{bilin} and the representation of $\Phi$ in (\ref{defFw})
to continue with
\[
(3_{B})=\int_{\mathbb{D}}\left\vert \Phi^{\prime}\left(  z\right)  f\left(
z\right)  \right\vert ^{2}dA\leq C\left(  \sum_{\kappa\in\mathcal{A}}h\left(
\kappa\right)  ^{2}\right)  \left(  \sum_{\gamma\in\mathcal{B}}b\left(
\gamma\right)  ^{2}\right)  .
\]
We also have from (\ref{Dnorm}) and Corollary \ref{smallcap} that
\[
\left(  \sum_{\kappa\in\mathcal{A}}h\left(  \kappa\right)  ^{2}\right)
\left(  \sum_{\gamma\in\mathcal{B}}b\left(  \gamma\right)  ^{2}\right)  \leq
CCap_{\mathcal{T}}\left(  E,F\right)  \left\Vert T_{b}\right\Vert ^{2}.
\]
Altogether we then have%
\begin{equation}
(3_{B})\leq C\ Cap_{\mathcal{T}}\left(  E,F\right)  \left\Vert T_{b}%
\right\Vert ^{2},\label{VIII}%
\end{equation}
and thus also%
\begin{equation}
\left\vert (3)\right\vert \leq\varepsilon\mu_{b}\left(  V_{G}\right)
+C\left\Vert T_{b}\right\Vert ^{2}Cap_{\mathcal{T}}\left(  E,F\right)
.\label{II}%
\end{equation}

We begin our estimate of term (4) by%
\begin{align}
\left\vert (4)\right\vert  & =\left\vert \int_{\mathbb{D}}\Lambda b^{\prime
}\left(  z\right)  \overline{b^{\prime}\left(  z\right)  }\Phi\left(
z\right)  ^{2}dA\right\vert \label{rr4}\\
& \leq\sqrt{\int_{\mathbb{D}}\left\vert b^{\prime}\left(  z\right)
\Phi\left(  z\right)  \right\vert ^{2}dA}\sqrt{\int_{\mathbb{D}}\left\vert
\Lambda b^{\prime}\left(  z\right)  \Phi\left(  z\right)  \right\vert ^{2}%
dA}.\nonumber
\end{align}
where the first factor is $\sqrt{\left(  3_{A}\right)  /\varepsilon}$. We
claim the following estimate for the second factor $\sqrt{(4_{A})}:=\left\Vert
\Phi\Lambda b^{\prime}\right\Vert _{L^{2}\left(  \mathbb{D}\right)  }$:

\begin{lemma}%
\begin{equation}
(4_{A})=\int_{\mathbb{D}}\left\vert \Phi\left(  z\right)  \Lambda b^{\prime
}\left(  z\right)  \right\vert ^{2}dA\leq C\mu_{b}\left(  V_{G}^{\beta
}\setminus V_{G}\right)  +C\left\Vert T_{b}\right\Vert ^{2}Cap_{\mathcal{T}%
}\left(  E,F\right) \label{pres}%
\end{equation}

\begin{proof}
From (\ref{defLambda}) we obtain%
\begin{align*}
(4_{A})  & =\int_{\mathbb{D}}\left\vert \Phi\left(  z\right)  \right\vert
^{2}\left\vert \left\{  \int_{V_{G}^{\beta}\setminus V_{G}}+\int
_{\mathbb{D}\setminus V_{G}^{\beta}}\right\}  \frac{b^{\prime}\left(
\zeta\right)  \left(  1-\left\vert \zeta\right\vert \right)  ^{s}}{\left(
1-\overline{\zeta}z\right)  ^{2+s}}dA\right\vert ^{2}dA\\
& \leq C\int_{\mathbb{D}}\left\vert \Phi\left(  z\right)  \right\vert
^{2}\left(  \int_{V_{G}^{\beta}\setminus V_{G}}\frac{\left\vert b^{\prime
}\left(  \zeta\right)  \right\vert \left(  1-\left\vert \zeta\right\vert
\right)  ^{s}}{\left\vert 1-\overline{\zeta}z\right\vert ^{2+s}}dA\right)
^{2}dA\\
& \qquad\qquad+C\int_{\mathbb{D}}\left\vert \Phi\left(  z\right)  \right\vert
^{2}\left\vert \int_{\mathbb{D}\setminus V_{G}^{\beta}}\frac{b^{\prime}\left(
\zeta\right)  \left(  1-\left\vert \zeta\right\vert \right)  ^{s}}{\left(
1-\overline{\zeta}z\right)  ^{2+s}}dA\right\vert ^{2}dA\\
& =(4_{AA})+(4_{AB}).
\end{align*}
Corollary \ref{210}\ shows that%
\begin{align*}
\left\vert (4_{AA})\right\vert  & \leq\int_{\mathbb{D}}\left(  \int
_{V_{G}^{\beta}\setminus V_{G}}\frac{\left\vert b^{\prime}\left(
\zeta\right)  \right\vert \left(  1-\left\vert \zeta\right\vert \right)  ^{s}%
}{\left\vert 1-\overline{\zeta}z\right\vert ^{2+s}}dA\right)  ^{2}dA\\
& \leq C\int_{V_{G}^{\beta}\setminus V_{G}}\left\vert b^{\prime}\left(
\zeta\right)  \right\vert ^{2}dA=C\mu_{b}\left(  V_{G}^{\beta}\setminus
V_{G}\right)  .
\end{align*}
We write the second integral as%
\begin{align*}
(4_{AB})  & =\left\{  \int_{V_{G}^{\gamma}}+\int_{\mathbb{D}\setminus
V_{G}^{\gamma}}\right\}  \left\vert \Phi\left(  z\right)  \right\vert
^{2}\left\vert \int_{\mathbb{D}\setminus V_{G}^{\beta}}\frac{b^{\prime}\left(
\zeta\right)  \left(  1-\left\vert \zeta\right\vert \right)  ^{s}}{\left(
1-\overline{\zeta}z\right)  ^{2+s}}dA\right\vert ^{2}dA\\
& =(4_{ABA})+(4_{ABB}),
\end{align*}
where by Corollary \ref{210} again,%
\begin{align*}
(4_{ABB})  & \leq C\ Cap_{\mathcal{T}}\left(  E,F\right)  ^{2}\int
_{\mathbb{D}}\left\vert b^{\prime}\left(  \zeta\right)  \right\vert ^{2}dA\\
& \leq C\left\Vert T_{b}\right\Vert ^{2}Cap_{\mathcal{T}}\left(  E,F\right)
^{2}\\
& \leq C\left\Vert T_{b}\right\Vert ^{2}Cap_{\mathcal{T}}\left(  E,F\right)  .
\end{align*}
Finally, with $\beta<\beta_{1}<\gamma<\alpha<1$, Corollary \ref{210} shows
that the term $(4_{ABA})$ satisfies the following estimate. Recall that
$V_{G}^{\gamma}=\cup J_{k}^{\gamma}$ and $w_{j}^{\gamma}=z\left(
J_{k}^{\gamma}\right)  $. We set $A_{\ell}=\left\{  k:J_{k}^{\gamma}\subset
J_{\ell}^{\beta_{1}}\right\}  $ and define $\ell\left(  k\right)  $ by the
condition $k\in A_{\ell\left(  k\right)  }$. From Lemma \ref{geoseparation} we
have that, with $\rho=\beta_{1}/\gamma,$ $\operatorname{sidelength}$%
($J_{k}^{\gamma})\leq\operatorname{sidelength}(J_{\ell}^{\beta_{1}})^{1/\rho}%
$. Hence
\begin{align*}
\left(  4_{ABA}\right)   & \leq C\int_{V_{G}^{\gamma}}\left(  \int
_{\mathbb{D}\setminus V_{G}^{\beta}}\frac{\left\vert b^{\prime}\left(
\zeta\right)  \right\vert \left(  1-\left\vert \zeta\right\vert \right)  ^{s}%
}{\left\vert 1-\overline{\zeta}z\right\vert ^{2+s}}d\zeta\right)  ^{2}dA\\
& \approx C\sum_{k}\int_{J_{k}^{\gamma}}\left\vert J_{k}^{\gamma}\right\vert
\left(  \int_{\mathbb{D}\setminus V_{G}^{\beta}}\frac{\left\vert b^{\prime
}\left(  \zeta\right)  \right\vert \left(  1-\left\vert \zeta\right\vert
\right)  ^{s}}{\left\vert 1-\overline{\zeta}w_{k}^{\gamma}\right\vert ^{2+s}%
}d\zeta\right)  ^{2}dA\\
& =C\sum_{k}\frac{\left\vert J_{k}^{\gamma}\right\vert }{\left\vert
J_{\ell\left(  k\right)  }^{\beta_{1}}\right\vert }\left\vert J_{\ell\left(
k\right)  }^{\beta_{1}}\right\vert \left(  \int_{\mathbb{D}\setminus
V_{G}^{\beta}}\frac{\left\vert b^{\prime}\left(  \zeta\right)  \right\vert
\left(  1-\left\vert \zeta\right\vert \right)  ^{s}}{\left\vert 1-\overline
{\zeta}w_{k}^{\gamma}\right\vert ^{2+s}}d\zeta\right)  ^{2}%
\end{align*}%
\begin{align*}
\qquad\qquad\qquad & \approx C\sum_{\ell}\frac{\sum_{k\in A_{\ell}}\left\vert
J_{k}^{\gamma}\right\vert }{\left\vert J_{\ell}^{\beta_{1}}\right\vert }%
\int_{J_{\ell}^{\beta_{1}}}\left(  \int_{\mathbb{D}\setminus V_{G}^{\beta}%
}\frac{\left\vert b^{\prime}\left(  \zeta\right)  \right\vert \left(
1-\left\vert \zeta\right\vert \right)  ^{s}}{\left\vert 1-\overline{\zeta
}z\right\vert ^{2+s}}d\zeta\right)  ^{2}dA\\
& \leq C\left\vert V_{G}^{\beta_{1}}\right\vert ^{\varepsilon\left(
\gamma-\beta_{1}\right)  }\int_{V_{G}^{\beta_{1}}}\left(  \int_{\mathbb{D}%
\setminus V_{G}^{\beta}}\frac{\left\vert b^{\prime}\left(  \zeta\right)
\right\vert \left(  1-\left\vert \zeta\right\vert \right)  ^{s}}{\left\vert
1-\overline{\zeta}z\right\vert ^{2+s}}d\zeta\right)  ^{2}dA\\
& \leq C\left\vert V_{G}^{\beta_{1}}\right\vert ^{\varepsilon\left(
\gamma-\beta_{1}\right)  }\left\Vert b\right\Vert _{\mathcal{D}}^{2}\leq
C\left\Vert T_{b}\right\Vert ^{2}Cap_{\mathcal{T}}\left(  E,F\right)  .
\end{align*}

We continue from (\ref{rr4}$).$ We know that $\left\vert (4)\right\vert
\leq\sqrt{(3_{A})/\varepsilon}\sqrt{(4_{A})}$ We estimate $(3_{A})$ using
(\ref{VII}) and $(4_{A})$ using (\ref{pres}). After that we continue by using
(\ref{inpart});
\begin{align}
\left\vert (4)\right\vert  & \leq\sqrt{C\mu_{b}\left(  V_{G}\right)
+C\left\Vert T_{b}\right\Vert ^{2}Cap_{\mathcal{T}}\left(  E,F\right)
}\label{I'}\\
& \text{ }\times\sqrt{C\mu_{b}\left(  V_{G}^{\beta}\setminus V_{G}\right)
+C\left\Vert T_{b}\right\Vert ^{2}Cap_{\mathcal{T}}\left(  E,F\right)
}\nonumber
\end{align}%
\begin{align*}
& \leq\sqrt{C\mu_{b}\left(  V_{G}\right)  +C\left\Vert T_{b}\right\Vert
^{2}Cap_{\mathcal{T}}\left(  E,F\right)  }\\
& \text{ }\times\sqrt{\varepsilon\mu_{b}\left(  V_{G}\right)  +C\left\Vert
T_{b}\right\Vert ^{2}Cap_{\mathcal{T}}\left(  E,F\right)  }\\
& \leq\sqrt{\varepsilon}\mu_{b}\left(  V_{G}\right)  +C\sqrt{\mu_{b}\left(
V_{G}\right)  }\sqrt{\left\Vert T_{b}\right\Vert ^{2}Cap_{\mathcal{T}}\left(
E,F\right)  }\\
& \text{ }+C\left\Vert T_{b}\right\Vert ^{2}Cap_{\mathcal{T}}\left(
E,F\right)  .
\end{align*}
Now, recalling that $f^{\prime}=b^{\prime}+\Lambda b^{\prime},$%
\begin{align}
\left\Vert \Phi f\right\Vert _{\mathcal{D}}^{2}  & \leq C\int\left\vert
\Phi^{\prime}\left(  z\right)  f\left(  z\right)  \right\vert ^{2}%
dA+C\int\left\vert \Phi\left(  z\right)  (b^{\prime}(z)+\Lambda b^{\prime
}(z))\right\vert ^{2}dA\label{rr6}\\
& \leq C\left(  3_{B}\right)  +C\frac{1}{\varepsilon}\left(  3_{A}\right)
+C\left(  4_{A}\right)  .\nonumber\\
& \leq C\mu_{b}\left(  V_{G}\right)  +C\left\Vert T_{b}\right\Vert
^{2}Cap_{\mathcal{T}}\left(  E,F\right)  ,\nonumber
\end{align}
by (\ref{pres}) and the estimates (\ref{VII}) and (\ref{VIII}) for $(3_{A})$
and $(3_{B})$.
\end{proof}
\end{lemma}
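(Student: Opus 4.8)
The plan is to expand $\Lambda b'$ from (\ref{defLambda}), split the $\zeta$-integral over $\mathbb D\setminus V_G$ into the near annulus $V_G^\beta\setminus V_G$ and the far region $\mathbb D\setminus V_G^\beta$, and then split the $z$-integral according to whether $z$ lies in $F=V_G^\gamma$. Estimating $|\Lambda b'|^2$ by twice the sum of the squares of the two $\zeta$-pieces, this gives $(4_A)\le C\,(4_{AA})+C\,(4_{ABA})+C\,(4_{ABB})$, where $(4_{AA})=\int_{\mathbb D}|\Phi|^2\bigl(\int_{V_G^\beta\setminus V_G}\frac{|b'(\zeta)|(1-|\zeta|)^s}{|1-\bar\zeta z|^{2+s}}\,dA\bigr)^2dA$ is the near term, and $(4_{ABA})$, $(4_{ABB})$ are the far term restricted to $z\in V_G^\gamma$ and to $z\notin V_G^\gamma$ respectively. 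Three facts will be used repeatedly: $\Phi$ is globally bounded, $|\Phi(z)|\le C$, which follows from the last three lines of (\ref{estimates}) together with $Cap_{\mathcal T}(E,F)\le C$ (the latter from Lemma \ref{newcondenser} under the standing assumption that $Cap_{\mathbb D}(G)$ is not large); the $L^2(dA)$-boundedness of the integral operators of Corollary \ref{210} with the degree-$(2+s)$ kernels and weight exponent $s$, which for $p=2$ is precisely the condition $s>-1/2$ in force here; and $\int_{\mathbb D}|b'|^2\,dA=\|b\|_{\mathcal D}^2\le C\|T_b\|^2$ from (\ref{Dnorm}).

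For the near term I would simply use $|\Phi|^2\le C$ and recognize the inner integral as one of the operators of Corollary \ref{210} applied to $\chi_{V_G^\beta\setminus V_G}|b'|$; its $L^2(dA)$-bound gives $(4_{AA})\le C\int_{V_G^\beta\setminus V_G}|b'|^2\,dA=C\mu_b(V_G^\beta\setminus V_G)$, the first term on the right of (\ref{pres}). For the far term with $z\notin F$ I would instead invoke the fourth line of (\ref{estimates}), $|\Phi(z)|\le C\,Cap_{\mathcal T}(E,F)$, pull out the square of that factor, bound the inner integral in $L^2(dA)$ by Corollary \ref{210} again, and use $\|b\|_{\mathcal D}^2\le C\|T_b\|^2$ and $Cap_{\mathcal T}(E,F)\le C$ to obtain $(4_{ABB})\le C\,Cap_{\mathcal T}(E,F)^2\|T_b\|^2\le C\|T_b\|^2Cap_{\mathcal T}(E,F)$.

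The main obstacle is the remaining term $(4_{ABA})$, where $\Phi$ is only known to be bounded, so the extra factor $Cap_{\mathcal T}(E,F)$ must be extracted from the geometry of the nested blowups. Here $z$ runs over the tent region $V_G^\gamma=\bigcup_kT(J_k^\gamma)$ while $\zeta$ stays in $\mathbb D\setminus V_G^\beta$, so $z$ and $\zeta$ are well separated by Lemma \ref{geoseparation} applied to $V_G^\gamma$ with exponent $\beta/\gamma$. The plan is, first, to use this separation to freeze the kernel $|1-\bar\zeta z|^{-(2+s)}$ at the center $w_k^\gamma$ of each $\gamma$-tent, losing only a bounded factor, and to perform the $dA$-integration over $T(J_k^\gamma)$, which contributes a weight comparable to $|J_k^\gamma|^2$; second, to group the $\gamma$-tents according to the intermediate-scale tent $T(J_\ell^{\beta_1})$, $\beta<\beta_1<\gamma$, that contains them, and to apply Lemma \ref{geoseparation} once more, now with exponent $\beta_1/\gamma$, which bounds the sidelength of $J_k^\gamma$ by a power strictly larger than $1$ of the sidelength of $J_\ell^{\beta_1}$, so that $\sum_{J_k^\gamma\subset J_\ell^{\beta_1}}|J_k^\gamma|\le C|J_\ell^{\beta_1}|^{1+\eta}$ for some $\eta=\eta(\beta_1,\gamma)>0$; and third, to reassemble into $(4_{ABA})\le C|V_G^{\beta_1}|^{\eta}\int_{V_G^{\beta_1}}\bigl(\int_{\mathbb D\setminus V_G^\beta}\frac{|b'(\zeta)|(1-|\zeta|)^s}{|1-\bar\zeta z|^{2+s}}\,dA\bigr)^2dA$, bound the inner double integral by $C\|b\|_{\mathcal D}^2\le C\|T_b\|^2$ via the $L^2(dA)$-bound of Corollary \ref{210}, and absorb the prefactor using $|V_G^{\beta_1}|^{\eta}\le C\,Cap_{\mathcal T}(E,F)$. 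This last step needs a little care: we may assume $Cap_{\mathbb D}(G)$, and hence the Lebesgue measure of $V_G^{\beta_1}$, is small (otherwise (\ref{maintheta}) holds trivially, cf.\ (\ref{otherwise})), and since $Cap_{\mathcal T}(E,F)$ is comparable to $Cap_{\mathbb D}(G)$, which decays only logarithmically in the measure of $G$ while a positive power of $|V_G^{\beta_1}|$ decays polynomially, the prefactor is controlled. Adding the three pieces yields $(4_A)\le C\mu_b(V_G^\beta\setminus V_G)+C\|T_b\|^2Cap_{\mathcal T}(E,F)$, which is (\ref{pres}). The heart of the argument, and the place where the three uses of Lemma \ref{geoseparation} and the choice of the intermediate exponent $\beta_1$ must be coordinated, is the $\beta_1$-regrouping in $(4_{ABA})$.
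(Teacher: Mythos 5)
Your proposal is correct and follows essentially the same route as the paper: the identical splitting of $\Lambda b'$ into the near annulus $V_G^\beta\setminus V_G$ and the far region, the further split of the far term by whether $z\in V_G^\gamma$, the use of Corollary \ref{210} for $(4_{AA})$ and $(4_{ABB})$ (with the fourth line of (\ref{estimates}) supplying the factor $Cap_{\mathcal T}(E,F)^2$ off $F$), and the same $\beta_1$-regrouping of the $\gamma$-tents for $(4_{ABA})$ culminating in the prefactor $|V_G^{\beta_1}|^{\varepsilon(\gamma-\beta_1)}$. Your explicit justification that this prefactor is absorbed because capacity decays only logarithmically in the measure while the prefactor decays polynomially is a point the paper leaves implicit, but it is the intended reasoning.
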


Using Proposition \ref{3} and the estimates (\ref{II}), (\ref{I'}) and
(\ref{rr6}) in (\ref{rr3}) we obtain%
\begin{align*}
\mu_{b}\left(  V_{G}\right)   & \leq\sqrt{\varepsilon}\mu_{b}\left(
V_{G}\right)  +C\left\Vert T_{b}\right\Vert ^{2}Cap_{\mathcal{T}}\left(
E,F\right) \\
& +C\sqrt{\left\Vert T_{b}\right\Vert ^{2}Cap_{\mathcal{T}}\left(  E,F\right)
}\sqrt{\mu_{b}\left(  V_{G}\right)  }\\
& \leq\sqrt{\varepsilon}\mu_{b}\left(  V_{G}\right)  +C\left\Vert
T_{b}\right\Vert ^{2}Cap_{\mathcal{T}}\left(  E,F\right)  .
\end{align*}
We absorb the first term into the right side. Now using Lemma
\ref{newcondenser}, Lemma \ref{contain} again, and Corollary \ref{capequiv} we
obtain
\[
Cap_{\mathcal{T}}\left(  E,F\right)  \leq CCap_{\mathbb{D}}G.
\]
Finally we have%
\[
\mu_{b}\left(  V_{G}\right)  \leq C\left\Vert T_{b}\right\Vert ^{2}%
Cap_{\mathcal{T}}\left(  E,F\right)  \leq C\left\Vert T_{b}\right\Vert
^{2}Cap_{\mathbb{D}}G,
\]
which is (\ref{maintheta}).

\section{Appendix on Tree Extremals}

Let $E$ be a stopping time in $\mathcal{T}$. Recall that
\begin{equation}
Cap_{\mathcal{T}}\left(  E\right)  =\inf\{\Vert h\Vert_{\ell^{2}}^{2}%
:\ Ih\geq1\ \text{on}\ E\}.\label{capfunction}%
\end{equation}
We call functions which can be used in computing the infimum
\textit{admissible.\textrm{\ }}

Much of the following proposition as well as Proposition \ref{extcondenser}
could be extracted from general capacity theory such as presented in, for
instance, \cite{AH}. Statement (3) is the discrete analog of the fact that
continuous capacity can be interpreted as the derivative at infinity of a
Green function.

\begin{proposition}
\label{extremalfunction} Suppose $E\subset\mathcal{T}$ is given.

\begin{enumerate}
\item There is a function $h$ such that the infimum in the definition of
$\mbox{Cap}_{\mathcal{T}}(E)$ is achieved.

\item If $x\notin E$,
\begin{equation}
h(x)=h(x_{+})+h(x_{-}).\label{sum}%
\end{equation}

\item $h(o)=\Vert h\Vert_{\ell^{2}}^{2}$.

\item $h$ is strictly positive on $\mathcal{G}\left(  o,E\right)  $ and zero elsewhere.

\item $Ih|_{E}=1.$
\end{enumerate}
\end{proposition}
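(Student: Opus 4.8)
The plan is to realize $h$ as an orthogonal projection in $\ell^{2}(\mathcal{T})$ and to read items (2)--(5) off the variational inequality characterizing it. For (1): the admissible set $K=\{f\in\ell^{2}(\mathcal{T}):If\geq 1\text{ on }E\}$ is convex, is nonempty since $I\chi_{\{o\}}\equiv 1$ so $\chi_{\{o\}}\in K$, and is closed since for each fixed $x$ the map $f\mapsto If(x)=\langle f,\chi_{[o,x]}\rangle_{\ell^{2}}$ is continuous ($[o,x]$ is finite). Hence $K$ contains a unique element $h$ of least norm, namely $h=P_{K}(0)$, characterized by $h\in K$ together with
\begin{equation*}
\langle h,f\rangle_{\ell^{2}}\ \geq\ \|h\|_{\ell^{2}}^{2}\qquad(f\in K).\tag{$\star$}
\end{equation*}
This gives (1) and the uniqueness of $h$ (used elsewhere in the paper). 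I will repeatedly use the elementary consequence of $(\star)$: if $v\in\ell^{2}(\mathcal{T})$ is finitely supported and $h+tv\in K$ for all small $t>0$, then $\langle h,v\rangle\geq 0$; if moreover $h-tv\in K$ for all small $t>0$, then $\langle h,v\rangle=0$.

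The easy items follow by choosing the right $v$. Since adding a nonnegative function to $If$ preserves $If\geq 1$, we have $h+\chi_{\{y\}}\in K$ for every $y$, so $(\star)$ gives $h(y)\geq 0$: thus $h\geq 0$. For (2), fix $x\notin E$ and put $v=\chi_{\{x\}}-\chi_{\{x_{+}\}}-\chi_{\{x_{-}\}}$; a one-line check gives $Iv=\chi_{\{x\}}$, hence $I(h+tv)=Ih$ on $E$ and $h+tv\in K$ for every $t\in\mathbb{R}$, so $\langle h,v\rangle=0$, i.e. $h(x)=h(x_{+})+h(x_{-})$. For the support part of (4): if $h(y)>0$ while $S(y)\cap E=\emptyset$, then $I(h-t\chi_{\{y\}})=Ih$ on $E$ for small $t>0$, so $h-t\chi_{\{y\}}\in K$ while $\|h-t\chi_{\{y\}}\|^{2}=\|h\|^{2}-2th(y)+t^{2}<\|h\|^{2}$, contradicting minimality; hence $h$ vanishes off $\mathcal{G}(o,E)=\bigcup_{e\in E}[o,e]$.

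Statement (5) is the crux, and I would prove it by a truncation (Markovian) argument. Put $H=Ih$; then $H\geq 0$ because $h\geq 0$, and with the convention $H(o^{-})=0$ set $g=\Delta\min(H,1)$. Since $t\mapsto\min(t,1)$ is $1$-Lipschitz, $|g(x)|=|\min(H(x),1)-\min(H(x^{-}),1)|\leq |H(x)-H(x^{-})|=|h(x)|$, so $g\in\ell^{2}$ with $\|g\|\leq\|h\|$; telescoping gives $Ig=\min(H,1)$, which equals $1$ on $E$ (where $H\geq 1$ by admissibility), so $g\in K$. By uniqueness of the minimizer $g=h$, hence $\min(H,1)=H$, i.e. $0\leq H\leq 1$ everywhere; in particular $H=1$ on $E$, which is (5). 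Now (3) is immediate: $\chi_{\{o\}}\in K$ and $(\star)$ give $h(o)=\langle h,\chi_{\{o\}}\rangle\geq\|h\|^{2}$, while $2h-\chi_{\{o\}}\in K$ (because $I(2h-\chi_{\{o\}})=2\,Ih-1=1$ on $E$ by (5)) and $(\star)$ give $2\|h\|^{2}-h(o)=\langle h,2h-\chi_{\{o\}}\rangle\geq\|h\|^{2}$, whence $h(o)\leq\|h\|^{2}$; so $h(o)=\|h\|^{2}$.

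The step I expect to need actual work, rather than a single clever competitor, is the strict positivity in (4), and it is the one place where the running hypothesis that $E$ is a stopping time is genuinely used. Suppose $h(x_{0})=0$ for some $x_{0}\in\mathcal{G}(o,E)$, and pick $e_{0}\in E$ with $x_{0}\leq e_{0}$. Feeding the perturbations $h+t(\chi_{\{x\}}-\chi_{\{x_{+}\}}-\chi_{\{x_{-}\}})$, $t\geq 0$, into $(\star)$ — first for $x=x_{0}$, then in turn for each node of the geodesic $[x_{0},e_{0}]$, each of which is admissible for $t\geq 0$ since $Ih=1$ at nodes of $E$ by (5) and there is no constraint off $E$ — and using $h\geq 0$, one gets inductively that $h\equiv 0$ on $[x_{0},e_{0}]$. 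Then $1=H(e_{0})=H(x_{0}^{-})$ (which is $0$ when $x_{0}=o$, already a contradiction). When $x_{0}\neq o$ we have $x_{0}^{-}<e_{0}$, so $x_{0}^{-}\notin E$ ($E$ a stopping time) and $H$ is harmonic at $x_{0}^{-}$ by (2); from $H(x_{0}^{-})=H(x_{0})=1$ and $0\leq H\leq 1$, harmonicity forces the value $1$ at the parent and the sibling of $x_{0}$, and iterating up the geodesic $[o,x_{0}]$ gives $H(o)=1$; but $o\notin E$ here (since $o<e_{0}\in E$ and $E$ is a stopping time), so $H$ is harmonic at $o$ and $3H(o)=H(o^{-})+H(o_{+})+H(o_{-})=H(o_{+})+H(o_{-})\leq 2$, a contradiction. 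Hence $h>0$ on $\mathcal{G}(o,E)$, which completes (4) and the proof.
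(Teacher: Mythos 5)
Your proof is correct, and it takes a genuinely different and arguably cleaner route than the paper's. The paper proves existence first for finite $E$ (reducing to the finite-dimensional set of functions supported on $\mathcal{G}(o,E)$), derives the structural properties there, and then handles infinite $E$ by a limiting argument using monotonicity of $H_n=Ih_n$ and a maximum principle, together with dominated convergence. You instead observe that the admissible set $K$ is a nonempty closed convex subset of $\ell^{2}(\mathcal{T})$, take $h=P_{K}(0)$ once and for all, and read everything off the variational inequality $\langle h,f\rangle\geq\|h\|^{2}$, $f\in K$. This absorbs the finite/infinite distinction entirely. Your derivations of (2) and (3) are shorter than the paper's: (2) follows from the two-sided perturbation $v_x=\chi_{\{x\}}-\chi_{\{x_+\}}-\chi_{\{x_-\}}$ with the key identity $Iv_x=\chi_{\{x\}}$, and (3) from the pair of competitors $\chi_{\{o\}}$ and $2h-\chi_{\{o\}}$ (the latter admissible because of (5)), whereas the paper obtains (3) by an induction on the branching structure via the recursive formula (\ref{wallstreet}). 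Your proof of (5) by the Markov/truncation argument $g=\Delta\min(H,1)$ with $\|g\|\leq\|h\|$, $g\in K$, and uniqueness of the projection is a nice replacement for the paper's local perturbation at a single $e\in E$, and it additionally delivers $0\leq H\leq 1$ everywhere, which you then exploit. Your strict positivity argument in (4) is different in shape as well: where the paper argues through the sibling of $x$ and descends to the root along values of $h$, you use the one-sided inequality $h(x)\geq h(x_{+})+h(x_{-})$ (valid for all $x$, including $x\in E$) together with $h\geq 0$ to get $h\equiv 0$ on $S(x_{0})$, then propagate $H=1$ up the geodesic by harmonicity and $0\leq H\leq 1$, reaching a contradiction at $o$. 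Both arguments use the stopping-time hypothesis at the same pressure point (to ensure interior nodes of the geodesic are not in $E$). One cosmetic remark: in the strict positivity step you say the perturbations are admissible for $t\geq 0$ "since $Ih=1$ at nodes of $E$ by (5)", but (5) is not actually needed there — $I(h+tv_x)=Ih+t\chi_{\{x\}}\geq Ih$ pointwise for $t\geq 0$ already gives admissibility.
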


\begin{proof}
Consider first the case when $E$ is a finite subset of $\mathcal{T}$.
Multiplying an admissible function by the characteristic function of
$\mathcal{G}\left(  o,E\right)  $ leaves it admissible and reduces the
$\ell^{2}$ norm. Hence we need only consider functions supported on the finite
set of vertices in $\mathcal{G}\left(  o,E\right)  .$ In that context it is
easy to see that an extremal exists, call it $h.$ Now consider (2). Suppose
$x\in\mathcal{T}\ $%
$\backslash$%
$E$ and consider the competing function $h^{\ast}$ which takes the same values
as $h$ except possible at $x,x_{+},$ and $x_{-}$ and whose values at those
points are determined by
\begin{align*}
& \left(  1\right)  .\text{ }h^{\ast}(x)+h^{\ast}(x_{+})=h(x)+h(x_{+})\text{
and }h^{\ast}(x)+h^{\ast}(x_{-})=h(x)+h(x_{-})\\
& \left(  2\right)  .\text{ }h^{\ast}(x)^{2}+h^{\ast}(x_{+})^{2}+h^{\ast
}(x_{-})^{2}\text{ is minimal subject to }\left(  1\right)  .
\end{align*}
Then $h^{\ast}$ is admissible, $\Vert h^{\ast}\Vert_{\ell^{2}}^{2}\leq\Vert
h\Vert_{\ell^{2}}^{2},$ and, doing the calculus problem, $h^{\ast}$ satisfies
(\ref{sum}). Hence $h$ must satisfy (\ref{sum}).

If $h(x)<0$ at some point, replacing its value by zero leaves the function
admissible while reducing the $\ell^{2}$ norm, hence $h\geq0$. To complete the
proof of (4) we must show that we cannot have an $x\in\mathcal{G}\left(
o,E\right)  $ at which $h(x)=0.$ Suppose we had such a point. By (\ref{sum})
and the fact that $h\geq0$, we have $h\equiv0$ on $S_{\mathcal{T}}(x)$. Hence
by admissibility $Ih(x^{-1})\geq1$. Let $y\neq x$ be the point such that
$x^{-1}=y^{-1}$. If $h(y)>0$ then setting $h(y)=0$ we would decrease the
$\ell^{2}$ norm while keeping the function admissible. Thus $h(y)=0$ and, by
(\ref{sum}), $h(x^{-1})=0$. Continuing in this way we find that $h\equiv0$ an
the geodesic from $o$ to some $e\in E,$ an impossibility for an admissible
function. Item (5) is a consequence of this. If $Ih(e)>1$ for some $e\in$ $E$
and $h(e)>0$ then we could decrease $h(e)$ slightly, reducing the norm of $h$
and still have $h$ admissible; contradicting the supposition that $h$ is extremal.

It remains to show (3) and we do that by induction on the size of $E$. If
$E=\{e\}$ is a single point having distance $d-1\geq0$ from $o$ then the
extremal is $h\equiv1/d$ on $[o,e]$ and $\Vert h\Vert_{\ell^{2}}%
^{2}=d(1/d)^{2}=h(o)$. Given $E$ with more than one point, let $z$ be the
uniquely determined branching point in $\mathcal{G}\left(  o,E\right)  $
having the least distance from the root. Consider the rooted trees
$\mathcal{T}_{\pm}=S(z_{\pm})$ with roots $z_{\pm}.$ Set $E_{\pm}%
=E\cap\mathcal{T}_{\pm}$ and let $h_{\pm}$ be the extremal functions for the
computation of $Cap_{\mathcal{T}_{\pm}}(E_{\pm})$. By induction, we have that
$\Vert h_{\pm}\Vert_{\ell^{2}}^{2}=h_{\pm}(z_{\pm})$. From properties (1)-(5)
satisfied by the extremal functions $h$, $h_{+}$ and $h$ it is easy to see
that
\[
h(x)=\left\{
\begin{array}
[c]{c}%
(1-Ih(z))h_{\pm}(x)\ \text{if}\ x\in{\mathcal{G}}(z_{\pm})\\
h(o)\text{ if}\ x\in\lbrack o,z]\text{ }\\
0\text{ otherwise. }%
\end{array}
\right.
\]
In particular, $Ih(z)=dh(o)$ if there are $d$ points in $[o,z]$, so that
\begin{equation}
h(o)=h(z)=h(z_{+})+h(z_{-})=\frac{h_{+}(z_{+})+h_{-}(z_{-})}{1-Ih(z)}%
=\frac{h_{+}(z_{+})+h_{-}(z_{-})}{1-dh(o)}.\label{wallstreet}%
\end{equation}
Rescaling and using the induction hypothesis,
\begin{align*}
\Vert h\Vert_{\ell^{2}}^{2}  & =\left(  \Vert h_{+}\Vert_{\ell^{2}}^{2}+\Vert
h_{-}\Vert_{\ell^{2}}^{2}\right)  (1-dh(o))^{2}+dh(o)^{2}\\
& =\left(  h_{+}(z_{+})+h_{-}(z_{-})\right)  (1-dh(o))^{2}+dh(o)^{2}\\
& =\frac{h(z_{+})+h(z_{-})}{1-dh(o)}(1-dh(o))^{2}+dh(o)^{2}%
\end{align*}%
\begin{align*}
& =\frac{h(z)}{1-dh(o)}(1-dh(o))^{2}+dh(o)^{2}\\
& =\frac{h(o)}{1-dh(o)}(1-dh(o))^{2}+dh(o)^{2}\\
& =h(o).
\end{align*}
We note in passing that, by (3), formula (\ref{wallstreet}) gives a recursive
formula for computing tree capacities.

Suppose now that $E$ is infinite. Select a sequence of finite sets
$E_{n}=\{e_{1},\dots,e_{n}\}$ such that $E_{n}\nearrow E$. Let $h_{n}$ be the
corresponding extremal functions and $H_{n}=Ih_{n}$. We claim that the
sequence $H_{n}$ increases, in the sense specified below. Let $K=H_{n}%
-H_{n-1}=I(h_{n}-h_{n-1})=Ik_{n}$. By (\ref{sum}), the function $K$ satisfies
the mean value property on ${\mathcal{G}}(o,E_{n})\setminus(\{o\}\cup E_{n}%
)$:
\[
K(x)=\frac{1}{3}[K(x_{+})+K(x_{-})+K(x^{-1})],\ \mbox{if}\ x\in{\mathcal{G}%
}(o,E_{n})\setminus(\{o\}\cup E_{n}).
\]
Moreover, $K$ vanishes on $\{o\}\cup E_{n-1}$ and it is positive at $e_{n}$,
since $H_{n-1}(e_{n})\leq1=H_{n}(e_{n})$, by (3) and (4). By the maximum
principle (an easy consequence of the mean value property), $K_{n}\geq0$ in
${\mathcal{G}}(o,E_{n})$. Hence, the limit $Ih=H=\lim_{n}H_{n}$ exists in
${\mathcal{G}}(o,E)$ and it is finite because each $H_{n}$ is bounded above by
$1$. Since $h(x)=H(x)-H(x^{-1})=\lim h_{n}(x)$, $h$ is admissible for $E$ and
it satisfies (3), (4) and (5).

Also, $h_{n}\rightarrow h$ as $n\rightarrow\infty$, pointwise, and $\Vert
h_{n}\Vert_{\ell^{2}}^{2}=h_{n}(o)\rightarrow h(o)$, by dominated convergence,
hence,
\[
h(o)=\lim_{n\rightarrow\infty}\Vert h_{n}\Vert_{\ell^{2}}^{2}=\Vert
h\Vert_{\ell^{2}}^{2},
\]
which is (3) for $h$.

It remains to prove that $h$ is extremal. Suppose $k$ is another admissible
function for $E$, and let $k_{n}$ be its restriction to ${\mathcal{G}}%
(o,E_{n})$, which is clearly admissible for $E_{n}$. By the extremal character
of the functions $h_{n}$, we have
\[
\Vert k\Vert_{\ell^{2}}^{2}=\lim_{n\rightarrow\infty}\Vert k_{n}\Vert
_{\ell^{2}}^{2}\leq\lim_{n\rightarrow\infty}\Vert h_{n}\Vert_{\ell^{2}}^{2}=\lim_{n\rightarrow\infty}h_{n}(o)=h(o)=\Vert h\Vert_{\ell^{2}}^{2},
\]
hence, $h$ is extremal among the admissible functions for $E$%
.\textit{\textrm{\ }}
\end{proof}

\begin{proof}
[Proof of Proposition \ref{extcondenser}]Consider each $e\in E$ as the root of
the tree $\mathcal{T}_{e}=S(e).$ Set $F_{e}=F\cap S(e)$ and let $h_{e}$ be the
extremal function (from the previous proposition) for computing
$Cap_{\mathcal{T}_{e}}(F_{e}).$ Using the previous proposition it is
straightforward to check that $h=\sum h_{e}$ is the required extremal function
and has the required properties.
\end{proof}

\section{Acknowledgements}

This work was begun while the second, third, and fourth author were visiting
the Fields Institute. We thank the institute for its hospitality and its
excellent working conditions. While there we had interesting discussions with
Michael Lacey. He suggested an approach to the theorem that was quite
different from what we had been envisioning and his comments helped shape our
approach. In particular they led us to suspect that a good estimate for the
"collar error term", Proposition \ref{3}, could play a crucial role. We thank
him for his involvement.


\begin{thebibliography}{99}                                                                                               %
\bibitem {AH}\textsc{D. Adams and L. I. Hedberg,} \textit{Function spaces and
potential theory}, Grundlehren der Mathematischen Wissenschaften, 314.
Springer-Verlag, 1996.

\bibitem {ArRoSa}\textsc{N. Arcozzi, R. Rochberg and E. Sawyer,}
\textit{Carleson measures for analytic Besov spaces}, Rev. Mat. Iberoamericana
\textbf{18} (2002), 443-510.

\bibitem {ArRoSa2}\textsc{N. Arcozzi, R. Rochberg and E. Sawyer,}
\textit{Carleson measures and interpolating sequences for Besov spaces on
complex balls}, Mem. A. M. S. \textbf{859}, 2006.

\bibitem {ARSW}\textsc{N. Arcozzi, R. Rochberg, E. Sawyer and B. Wick}.
Function Spaces Related to the Dirichlet Space, manuscript, 2008.

\bibitem {ArRoSa4}\textsc{N. Arcozzi, R. Rochberg and E. Sawyer,}%
\textit{\ Onto interpolating sequences for the Dirichlet space}, manuscript, 2007.

\bibitem {Bis}\textsc{C. J. Bishop,} \textit{Interpolating sequences for the
Dirichlet space and its multipliers, }preprint (1994).

\bibitem {Boe}\textsc{B. B\"{o}e,} \textit{Interpolating sequences for Besov
spaces}, J. Functional Analysis, \textbf{192} (2002), 319-341.

\bibitem {CRW}\textsc{R. Coifman, R. Rochberg and G. Weiss,}
\textit{Factorization theorems for Hardy spaces in several variables}. Ann. of
Math. (2) 103 (1976), no. 3, 611---635.

\bibitem {CM}\textsc{R. Coifman and T. Murai,}\textit{\ Commutators on the
potential-theoretic energy spaces}. Tohoku Math. J. (2) 40 (1988), no. 3, 397--407.

\bibitem {FL}\textsc{S. Ferguson and M. Lacey,} \textit{A characterization of
product BMO by commutators}, Acta Math., 189, 2002, 2, 143--160.

\bibitem {LT}\textsc{M. Lacey and E. Terwilleger, }\textit{Hankel Operators in
Several Complex Variables and Product }$BMO,$ to appear, Houston J Math.

\bibitem {MV}\textsc{V. Maz'ya and I. Verbitsky}, \textit{The Schr\"{o}dinger
operator on the energy space: boundedness and compactness criteria}. Acta
Math. 188 (2002), no. 2, 263--302.

\bibitem {N}\textsc{Z. Nehari,} \textit{On bounded bilinear forms}. Ann. of
Math. (2) 65 (1957), 153--162.

\bibitem {P}V. Peller, Hankel operators and their applications, Springer
Monographs in Mathematics. Springer-Verlag, New York, 2003.

\bibitem {RW}\textsc{R. Rochberg and Z. Wu,}\textit{\ A new characterization
of Dirichlet type spaces and applications}, Ill. J. Math. \textbf{37} (1993), 101-122.

\bibitem {St}\textsc{D. Stegenga,} \textit{Multipliers of the Dirichlet
space}. Illinois J. Math. 24 (1980), no. 1, 113--139.

\bibitem {T}\textsc{V. Tolokonnikov},.Carleson's Blaschke products and Douglas
algebras. (Russian) Algebra i Analiz 3 (1991), no. 4, 186--197; translation in
St. Petersburg Math. J. 3 (1992), no. 4, 881--892

\bibitem {Zhu}\textsc{K. Zhu,} \textit{Spaces of holomorphic functions in the
unit ball}, Springer-Verlag 2004$.$
\end{thebibliography}
\end{document}